\documentclass[12pt]{amsart}
\usepackage{amsfonts, pdfsync}
\usepackage{graphicx, comment}
\usepackage{times}
\usepackage{amsmath}
\usepackage{amssymb}
\usepackage{latexsym}
\usepackage{amsthm}
\newtheorem{theorem}{Theorem}[section]
\newtheorem{lemma}[theorem]{Lemma}

\newtheorem{definition}[theorem]{Definition}

\newtheorem{remark}[theorem]{Remark}
\newtheorem{open}[theorem]{Open Problem}
\newcommand{\As}{\mathcal{A}_{\mathrm{s}}}
\newcommand{\Aw}{\mathcal{A}_{\mathrm{w}}}
\newcommand{\A}{\mathcal{A}}

\newcommand{\ds}{\mathrm{d}_{\mathrm{s}}}
\newcommand{\dw}{\mathrm{d}_{\mathrm{w}}}

\newcommand{\Dc}{\mathcal{E}}

\newcommand{\w}{\mathrm{w}}
\newcommand{\Xw}{X_{\mathrm{w}}}
\newcommand{\Xs}{X_{\mathrm{s}}}
\newcommand{\Hw}{H_{\mathrm{w}}}
\newcommand{\Ab}{\mathcal{A}_{\bullet}}
\newcommand{\db}{\mathrm{d}_{\bullet}}
\newcommand{\dd}{\mathrm{d}}

\newcommand{\wb}{{\omega}_{\bullet}}
\newcommand{\ww}{{\omega}_{\mathrm{w}}}
\newcommand{\ws}{{\omega}_{\mathrm{s}}}

\newcommand{\ddt}{\frac{d}{dt}}

\newcommand{\Rc}{\widetilde{R}}

\def\setmarsing{
\oddsidemargin-0in
\evensidemargin-0in
\textwidth5.5in
\textheight8in
}
\setmarsing

\begin{document}
\title{Uniform global attractors for the nonautonomous 3D Navier-Stokes equations}
\author{Alexey Cheskidov}
\thanks{A.C. was partially supported by the NSF grant DMS--1108864}
\address[A. Cheskidov]
{Department of Mathematics, Statistics and Computer Science\\ University of Illinois at Chicago
\\851 S Morgan St, Chicago, IL 60607-7045, USA}
\email{acheskid@math.uic.edu}

\author{Songsong Lu}
\thanks{S.L. was partially supported
by Specialized Research Fund for the Doctoral Program of Higher Education (200805581025), NSFC 11001279 and the Fundamental Research Funds for the Central Universities (11lgpy27).}
\address[S. Lu]
{Department of Mathematics, Sun Yat-sen University
\\Guangzhou, 510275, P.R. China}
\email{songsong\_lu@yahoo.com}

\begin{abstract}
We obtain the existence and the structure of the weak uniform (with respect to the initial time) global attractor and construct a trajectory attractor for the 3D
Navier-Stokes equations (NSE) with a fixed time-dependent force satisfying a
translation boundedness condition. Moreover, we show that if
the force is normal and every complete bounded solution is strongly continuous, then the uniform global attractor is strong, strongly compact, and solutions converge strongly toward the trajectory attractor.
Our method is based on taking a closure of
the autonomous evolutionary system without uniqueness, whose trajectories are solutions to the nonautonomous 3D NSE.
The established framework is general and can also be applied to other
nonautonomous dissipative partial differential equations for which the uniqueness of
solutions might not hold. It is not known whether previous frameworks can
also be applied in such cases as we indicate in open problems related
to the question of uniqueness of the  Leray-Hopf weak solutions.

\noindent {\bf Keywords: }uniform global
attractor, Navier-Stokes equations, evolutionary system, trajectory attractor, normal external force

\noindent{\bf Mathematics Subject Classification}: 35B40, 35B41, 35Q30, 76D05
\end{abstract}

\maketitle

\section{Introduction}

The theory of uniform attractors of nonautonomous infinite-dimensional dissipative dynamical system bears its roots in the work of Haraux \cite{Ha91}, who defined the
uniform global attractor as a minimal closed set which attracts
all the trajectories starting from a bounded set uniformly with respect to (w.r.t.)
the initial time. This naturally generalizes the notion of a global attractor to nonautonomous dynamical systems. In this paper we will present a method for obtaining the structure of the uniform  global attractor of a general nonautonomous
system. In particular, we obtain the existence and the structure of the weak uniform global attractor and construct a trajectory attractor for the 3D Navier-Stokes equations (NSE) with a fixed time-dependent force satisfying a
translation boundedness condition.

Previous studies of uniform global attractors mostly used tools developed by
Chepyzhov and Vishik \cite{CV94, CV02}.
Their framework was based on the use of the so-called time symbol (e.g. the external force in 3D NSE) and constructing a symbol space as a suitable
closure of the translation family of the original symbol.
To describe the structure of the uniform global attractor they introduced an auxiliary notion of a uniform w.r.t. the symbol space attractor.
However, this method requires a strong condition on the force and
only provides the structure of the uniform w.r.t. the symbol space attractor,
which does not always have to coincide with the original uniform global
attractor (see Open Problem \ref{op:attractor}).
In this paper we present a different approach that deals directly with the notion of a uniform global attractor and is  based on taking a closure of the family of trajectories
of the system, which does not change the uniform global attractor.
The established method
is general and can be applied to any nonautonomous dissipative PDE.

Since the pioneering work of
Leray, the problem of regularity of the 3D NSE has been a subject of
serious investigation and still poses an important challenge for
mathematicians. Due to the lack of a uniqueness proof, it is not
known whether the 3D NSE possesses, for the autonomous case,  a semigroup of, or for the nonautonomous case, a process of  solution operators. Therefore, a classical theory of semigroup or process
\cite{H88,T88, L91, SY02, CV02} cannot be used for this system. A mathematical object describing long time behavior of an autonomous
3D NSE is a (weak) global attractor, a notion that goes all the way back to
the seminal work by Foias and Temam \cite{FT87}.

The goal of this paper is to study the long time behavior of the
3D NSE with a fixed time-dependent force in the physical space without making any assumptions
on weak solutions. Moreover, we assume that the force only satisfies a translation boundedness condition, which is the weakest condition that
guaranties the existence of a bounded uniform absorbing ball. In order to obtain
the  structure of the weak uniform global
attractor we will consider an autonomous evolutionary system without uniqueness, whose trajectories are solutions to the nonautonomous 3D NSE.
 The evolutionary system $\mathcal{E}$
was first introduced  in \cite{CF06} to study a
weak global attractor and a trajectory attractor for the autonomous 3D NSE, and then the
theory was developed further in \cite{C09} to make it
applicable to arbitrary autonomous dissipative PDE without uniqueness.
In particular, it was shown that the global attractor consists of points on complete bounded trajectories under an assumption (see \={A1}) satisfied by autonomous PDEs.
The evolutionary system is close to
Ball's generalized semiflow  \cite{B98}, but due to relaxed assumptions on the trajectories,
the Leray-Hopf weak solutions of the 3D NSE always form an evolutionary system
regardless whether they lose regularity or not. The advantage of this framework
lies in a simultaneous use of weak and strong metric, which makes it applicable to
any other PDE for which the uniqueness of solutions may
be in limbo.  Another canonical abstract framework for studying dynamical systems without
uniqueness in the phase space \cite{MV98} also requires various assumptions on the
trajectories (see \cite{CMR03} for comparison to \cite{B98} ).

In \cite{CL09} the authors already generalized  the framework of the evolutionary system to study the long time
behavior of nonautonomous dynamical systems without uniqueness. In this paper we develop the theory further and introduce a ``closure of the evolutionary system''
in order to obtain the structure of the  uniform
global attractor. This method  avoids the necessity of constructing a symbol space and works for systems without uniqueness.

The paper is organized as follows. In Section \ref{s:ES} we briefly
recall the theory of evolutionary system originally designed for
autonomous systems, define  a nonautonomous evolutionary system, and reduce it to an autonomous system. Then we consider classical cases of a
process and a family of processes and show how they define evolutionary systems.
In particular, when the evolutionary system is defined by a process, the uniform global attractor for the evolutionary system is identical to the uniform  global attractor for the process. Hence, using the theory
developed in this paper, we can describe the structure of the uniform global attractor of a general process, which was mentioned as an open problem in \cite{CV94, CV02}.

Section \ref{s:closing} is mainly concerned with the existence and the
structure of the weak and strong uniform global attractors for the nonautonomous evolutionary system. To this end, we consider a closure of the evolutionary system and prove
that its weak uniform global attractor is identical to the one for the original
evolutionary system. We then apply the theory
developed in \cite{C09} to the closure of the evolutionary system to obtain
various properties of the uniform global attractor for the original system.

In Subsection \ref{s:subES} we use our framework to examine the notion of uniform w.r.t. symbol space global attractor.
For this we assume that a suitable symbol space $\bar{\Sigma}$  is provided, and
consider a nonautonomous evolutionary system with such symbol space satisfying the uniqueness condition, i.e., we assume that for a fixed symbol there exists only one trajectory starting at a given point. We then study the relation between the system and its evolutionary subsystem whose symbol space $\Sigma$ is a dense subset of the former symbol space $\bar{\Sigma}$. Assuming that the system satisfies
an  additional condition
(which translates into the strong translation compactness condition on the force in the case of the 3D NSE), we show that it has the same weak uniform global attractor as its subsystem. Therefore, we show that the uniform w.r.t. $\bar{\Sigma}$ attractor coincides with the uniform global attractor.
Furthermore, if the evolutionary system is asymptotically compact, then the weak uniform global attractor is in fact the strong uniform global attractor.  The results in this subsection generalize those in \cite{CV02, LWZ05, Lu06, Lu07}. It is worth to mention that  our framework  is also useful for
more  general systems with uniqueness, where a symbol space may not exist, since we avoid constructing
such a space.

In Section \ref{s:traattra} we study the notion of a trajectory attractor that was first introduced in \cite{Se96} and further studied in \cite{CV97, CV02, SY02}.
We use the tools in the preceding sections to show the existence
and the structure of the trajectory attractor of a nonautonomous
evolutionary system such as the 3D NSE. Note that such a result could not be
obtained with previous frameworks (see Open Problem \ref{op:traattractor}).
We also show a relation of the trajectory attractor to a uniform global attractor. Moreover, in the case where all complete bounded trajectories are strongly continuous, a strong convergence of the trajectories to the trajectory attractor
is proved.

In Section \ref{3DNSE} we consider the 3D NSE with a translation bounded in
$L^2_\mathrm{loc}(\mathbb{R};V')$ fixed time-dependent external  force $g_0$.
We show that the Leray-Hopf weak solutions form an evolutionary system investigated in  Sections \ref{s:closing}, and
therefore all the results obtained in that section hold for the 3D NSE.
In particular, we obtain the structure of the weak uniform global attractor $\A$.
Notice again that we only require the weakest boundedness condition on the force and
make no assumptions on the solutions of the 3D NSE.
In addition, we show that if
the force $g_0$ is normal and every complete bounded solution is strongly continuous, then the weak uniform global attractor is strong, strongly compact, and solutions converge strongly toward the trajectory attractor.
The normality condition on the external force, introduced in \cite{LWZ05} and
Lu \cite{Lu06},  is weaker than the usual strong translation compactness condition (see \cite{LWZ05}), which is generally required in applications of Chepyzhov and Vishik's approach \cite{CV94,CV02}.

Evolutionary systems are constructed without a suitable symbol space  in Section \ref{3DNSE}. In Section \ref{s:OP} we construct a uniform w.r.t. symbol space global attractor for the
3D NSE. For this we first have to impose a stronger condition on the external force,
namely, we assume that the force is strongly translation compact. Then we can
find a suitable closure of the symbol space $\bar{\Sigma}$ for which the corresponding
evolutionary system enjoys the desired compactness property, and hence
obtain the structure of the uniform w.r.t. symbol space $\bar{\Sigma}$ attractor
$\Aw^{\bar{\Sigma}}$. However, this attractor might not coincide with the
uniform global attractor $\A$ if Leray-Hopf weak solutions are not unique (see
the Open Problem \ref{op:attractor}), which illustrates a limitation of the framework of uniform w.r.t. symbol global attractor put forward in \cite{CV94} (see also \cite{CV02}).
It is still not clear how to obtain the structure of the uniform global attractor $\A$
using the notion of  uniform w.r.t. symbol global attractor $\Aw^{\bar{\Sigma}}$ for the 3D NSE or other systems without
uniqueness, or where the uniqueness is not known. Similarly, the trajectory attractors constructed in \cite{Se96}
and \cite{CV97,CV02} for the 3D NSE or other systems without uniqueness,
might be bigger than the one we constructed in this section (see Open Problem \ref{op:traattractor}).

\section{Evolutionary system}\label{s:ES}
\subsection{Autonomous case}
Here we recall the basic definitions and results on evolutionary
systems (see \cite{C09} for details). Let $(X,\ds(\cdot,\cdot))$ be
a metric space endowed with a metric $\ds$, which will be referred
to as a strong metric. Let $\dw(\cdot, \cdot)$ be another metric on
$X$ satisfying the following conditions:
\begin{enumerate}
\item $X$ is $\dw$-compact.
\item If $\ds(u_n, v_n) \to 0$ as $n \to \infty$ for some
$u_n, v_n \in X$, then $\dw(u_n, v_n) \to 0$ as $n \to \infty$.
\end{enumerate}
Due to the property 2, $\dw(\cdot,\cdot)$ will be referred to as a weak metric on $X$. Denote by $\overline{A}^{\bullet}$ the closure of a set $A\subset X$
in the topology generated by $\db$.
Note that any strongly compact ($\ds$-compact) set is weakly compact
($\dw$-compact), and any weakly closed set is strongly closed.

Let $C([a, b];X_\bullet)$, where $\bullet = \mathrm{s}$ or $\mathrm{w}$, be the space of $\db$-continuous $X$-valued
functions on $[a, b]$ endowed with the metric
\[
\dd_{C([a, b];X_\bullet)}(u,v) := \sup_{t\in[a,b]}\db(u(t),v(t)).
\]
Let also $C([a, \infty);X_\bullet)$ be the space of $\db$-continuous
$X$-valued functions on $[a, \infty)$
endowed with the metric
\[
\dd_{C([a, \infty);X_\bullet)}(u,v) := \sum_{T\in \mathbb{N}} \frac{1}{2^T} \frac{\sup\{\db(u(t),v(t)):a\leq t\leq a+T\}}
{1+\sup\{\db(u(t),v(t)):a\leq t\leq a+T\}}.
\]
Note that the convergence in $C([a, \infty);X_\bullet)$ is equivalent to uniform convergence on compact sets.
Let
\[
\mathcal{T} := \{ I: \ I=[T,\infty) \subset \mathbb{R}, \mbox{ or }
I=(-\infty, \infty) \},
\]
and for each $I \subset \mathcal{T}$, let $\mathcal{F}(I)$ denote
the set of all $X$-valued functions on $I$.
Now we define an evolutionary system $\Dc$ as follows.
\begin{definition} \label{Dc}
A map $\Dc$ that associates to each $I\in \mathcal{T}$ a subset
$\Dc(I) \subset \mathcal{F}(I)$ will be called an evolutionary system if
the following conditions are satisfied:
\begin{enumerate}
\item $\Dc([0,\infty)) \ne \emptyset$.
\item
$\Dc(I+s)=\{u(\cdot): \ u(\cdot +s) \in \Dc(I) \}$ for
all $s \in \mathbb{R}$.
\item $\{u(\cdot)|_{I_2} : u(\cdot) \in \Dc(I_1)\}
\subset \Dc(I_2)$ for all
pairs $I_1,I_2 \in \mathcal{T}$, such that $I_2 \subset I_1$.
\item
$\Dc((-\infty , \infty)) = \{u(\cdot) : \ u(\cdot)|_{[T,\infty)}
\in \Dc([T, \infty)) \ \forall T \in \mathbb{R} \}.$
\end{enumerate}
\end{definition}

We will refer to $\Dc(I)$ as the set of all trajectories
on the time interval $I$. Trajectories in $\Dc((-\infty,\infty))$ are called complete.
Let $P(X)$ be the set of all subsets of $X$.
For every $t \geq 0$, define a map
\begin{eqnarray*}
&R(t):P(X) \to P(X),&\\
&R(t)A := \{u(t): u(0)\in A, u \in \Dc([0,\infty))\}, \qquad
A \subset X.&
\end{eqnarray*}
Note that the assumptions on $\Dc$ imply that $R(t)$ enjoys
the following property:
\begin{equation} \label{eq:propR(T)}
R(t+s)A \subset R(t)R(s)A, \qquad A \subset X,\quad t,s \geq 0.
\end{equation}
\begin{definition} \label{DAttractor}
A set $\Ab\subset X$ is a $\db$-global attractor
($\bullet=\mathrm{s}, \mathrm{w}$) if $\Ab$ is a minimal set
that is
\begin{enumerate} \item $\db$-closed. \item $\db$-attracting: for
any $B\subset X$ and $\epsilon>0$, there exists $t_0$, such that
\[R(t)B\subset B_\bullet(\Ab,\epsilon):=\{u:
\inf_{x\in \Ab}\db(u,x)<\epsilon\},\quad \forall\, t\geq t_0.
\]
\end{enumerate}
\end{definition}
\begin{definition}The $\wb$-limit ($\bullet=\mathrm s, \mathrm w$) of a set $A\subset X$ is
\[\wb(A):=\bigcap_{T\ge0}\overline{\bigcup_{t\ge T}R(t)A}^{\bullet}.\]\end{definition}

An equivalent definition of the $\wb$-limit set is given by
\[
\begin{split}
\wb(A)=\{&x\in X: \mbox{ there exist sequences }
t_n\rightarrow\infty \mbox{ as } n\rightarrow \infty\mbox{ and } x_n \in R(t_n)A,\\
& \mbox{such that } x_n\rightarrow x \mbox{ in } \db\mbox{-metric as
} n\rightarrow \infty\}.
\end{split}
\]

In order to extend the notion of invariance from a semiflow to an
evolutionary system we use the following mapping:
\[
\Rc(t) A := \{u(t): u(0) \in A, u \in \Dc((-\infty,\infty))\}, \qquad A \subset X,
\ t \in \mathbb{R}.
\]
\begin{definition} A set $A\subset X$ is positively invariant if
\[
\Rc(t) A\subset A, \qquad \forall t\geq 0.
\]
$A$ is invariant if
\[
\Rc(t) A= A, \qquad \forall t\geq 0.
\]
$A$ is quasi-invariant if for every $a\in A$ there exists a complete trajectory
$u \in \Dc((-\infty,\infty))$ with $u(0)=a$ and $u(t) \in A$ for all $t\in \mathbb{R}$.
\end{definition}
Note that in the case of semiflows the notions of invariance and quasi-invariance coincide
with the classical definition of invariance. This is not the case for evolutionary systems
without uniqueness. For instance, the weak global attractor for the 3D Navier-Stokes equations
consists of points on complete bounded trajectories. However, due to the lack of concatenation,
it is not known whether all the trajectories starting from the global attractor have to stay on the attractor.
\begin{definition}The evolutionary system $\Dc$ is asymptotically
compact if for any $t_k\rightarrow\infty $ and any $x_k\in R(t_k)X$,
the sequence $\{x_k\}$ is relatively strongly compact.
\end{definition}

Below are some additional assumptions that we will impose on $\Dc$ in
some cases.
\begin{itemize}
\item[\={A1}] $\Dc([0,\infty))$ is a compact set in $C([0,\infty); \Xw)$.
\item[\={A2}] (Energy inequality) Assume that $X$ is a set in some
Banach space $H$ satisfying the Radon-Riesz property (see below)
with the norm denoted by $|\cdot|$, such
that $\ds(x,y)=|x-y|$ for $x,y \in X$ and $\dw$ induces the weak
topology on $X$. Assume also that for any
$\epsilon >0$, there exists $\delta$, such that for every $u \in
\Dc([0,\infty))$ and $t>0$,
\[
|u(t)| \leq |u(t_0)| + \epsilon,
\]
for $t_0$ a.e. in $ (t-\delta, t)$.
\item[\={A3}] (Strong convergence a.e.) Let $u,u_n \in \Dc([0,\infty))$, be such that
$u_n \to u$ in $C([0, T];\Xw)$ for some $T>0$. Then $u_n(t) \to
u(t)$ strongly a.e. in $[0,T]$.
\end{itemize}

\begin{remark}
A Banach space $H$ is said to satisfy the Radon-Riesz property when a sequence
converges if and only if it converges weakly and the norms of the elements of the sequence converge to the
norm of the weak limit.
In many applications $X$ is a bounded closed set in a uniformly convex separable
Banach space $H$. Then the weak topology of $H$ is metrizable on $X$, and $X$ is
compact with respect to such a metric $\dw$. Moreover, the Radon-Riesz property is automatically satisfied in this case.
\end{remark}

\begin{theorem} \cite{C09}\label{t:weakA}
Let $\Dc$ be an evolutionary system. Then
\begin{itemize}\item [1.]
The weak global attractor $\Aw$ exists.\end{itemize} Furthermore, if
$\Dc$ satisfies \={A1}, then
\begin{itemize}
\item [2.]$ \Aw =\ww(X)=\ws(X)=\{ u_0: \ u_0=u(0) \mbox{ for some } u \in
\Dc((-\infty, \infty))\} $.
\item [3.]$\Aw$ is the maximal invariant
and maximal quasi-invariant set.
\item [4.](Weak uniform tracking
property) For any $\epsilon >0$, there exists $t_0$, such that for
any $t^*>t_0$, every trajectory $u \in \Dc([0,\infty))$ satisfies $
\dd_{C([t^*,\infty);\Xw)} (u, v) < \epsilon, $ for some complete
trajectory $v \in \Dc((-\infty,\infty))$.
\end{itemize}
\end{theorem}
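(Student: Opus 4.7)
The plan is to prove each of the four assertions in sequence, using weak compactness of $X$ for part 1 and a time-shift plus diagonal extraction for the remaining parts.

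For part 1, I would simply define $\Aw := \ww(X)$ and verify directly that it is a weak global attractor. Weak closedness follows from the definition as an intersection of weak closures. Weak attraction is proved by contradiction: if $\Aw$ did not attract some $B \subset X$, one extracts $t_n \to \infty$ and $x_n \in R(t_n) B$ with $\dw(x_n, \Aw) \geq \epsilon$; by $\dw$-compactness of $X$, a weak subsequential limit $x$ exists, but then $x \in \ww(B) \subset \ww(X) = \Aw$, a contradiction. Minimality is immediate: if $C$ is any $\dw$-closed $\dw$-attracting set, then every weak subsequential limit of any sequence in $R(t_n) X$ must lie in $C$, hence $\ww(X) \subset C$.

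For part 2 (under \={A1}), let $K := \{u(0) : u \in \Dc((-\infty,\infty))\}$. The inclusions $K \subset \ws(X) \subset \ww(X) = \Aw$ are elementary: any point on a complete trajectory lies in $R(t)X$ for every $t \geq 0$ by the shift property (2) of Definition \ref{Dc}, so it belongs to every strong closure $\overline{\bigcup_{t\geq T} R(t)X}^{\mathrm s}$. The key direction is $\Aw \subset K$. Given $x \in \ww(X)$, pick $t_n \to \infty$ and $u_n \in \Dc([0,\infty))$ with $u_n(t_n) \rightharpoonup x$, and define the shifts $v_n(s) := u_n(s + t_n) \in \Dc([-t_n, \infty))$ via properties (2) and (3). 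Using \={A1} together with translation-invariance to obtain weak-compactness of $\Dc([-T, \infty))$ in $C([-T,\infty); \Xw)$ for each $T$, a diagonal extraction produces a subsequence and a function $\bar v \in \Dc((-\infty,\infty))$ (invoking property (4) for completeness) with $\bar v(0) = x$. Hence $x \in K$.

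For part 3, quasi-invariance of $\Aw = K$ is clear: for $a \in K$ with complete trajectory $u$, $u(0) = a$, the translate $u(\cdot + t)$ is again complete by property (2), so $u(t) \in K$. Invariance $\Rc(t) \Aw = \Aw$ then splits into two inclusions; the direction $\Rc(t) \Aw \subset \Aw$ follows from quasi-invariance, and the reverse direction follows by considering, for $a \in \Aw$, a complete trajectory $u$ with $u(0) = a$ and applying a backwards shift $w(s) := u(s - t)$, yielding $w \in \Dc((-\infty,\infty))$ with $w(0) = u(-t) \in K = \Aw$ and $w(t) = a$. Maximality of $\Aw$ among quasi-invariant sets is immediate from the definition of $K$, and any invariant set $A$ satisfies $A = \Rc(0) A$, which forces every $a \in A$ to lie on a complete trajectory, hence $A \subset K = \Aw$.

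For part 4, I would argue by contradiction: if the weak uniform tracking property fails, one obtains $\epsilon > 0$, $t_n \to \infty$, and $u_n \in \Dc([0,\infty))$ such that $\dd_{C([t_n, \infty); \Xw)}(u_n, v) \geq \epsilon$ for every complete $v$. Setting $\tilde u_n(s) := u_n(s + t_n)$ and noting that sending $v \mapsto v(\cdot + t_n)$ is a bijection on complete trajectories (property (2)) which preserves the distance via the substitution $s = t - t_n$, one rewrites this as $\dd_{C([0,\infty); \Xw)}(\tilde u_n, v) \geq \epsilon$ for every complete $v$. Exactly the same diagonal extraction as in part 2 produces a limit $u^* \in \Dc((-\infty,\infty))$ with $\tilde u_n \to u^*$ in $C([0,\infty); \Xw)$ along a subsequence, contradicting the inequality above. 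The main obstacle throughout is the diagonal extraction in part 2 and its reuse in part 4: one must carefully combine \={A1}, the shift property, and the completeness-from-restrictions property (4) to ensure the limiting function actually belongs to $\Dc((-\infty,\infty))$; the remaining items are relatively routine once this extraction is in hand.
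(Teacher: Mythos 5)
The paper does not prove this theorem; it is quoted verbatim from \cite{C09}, so there is no internal proof to compare against. Judged on its own merits, your proposal is correct and follows what is essentially the canonical argument of \cite{C09}: existence via $\Aw=\ww(X)$ using weak compactness of $X$, and the structure, invariance/quasi-invariance, and weak tracking statements all via the time-shift plus diagonal extraction, correctly combining \={A1}, the shift property 2 (which makes each $\Dc([-T,\infty))$ compact, hence closed, in $C([-T,\infty);\Xw)$), the restriction property 3, and property 4 to conclude that the extracted limit is a complete trajectory.
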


\begin{theorem} \cite{C09}\label{t:strongA}
Let $\Dc$ be an asymptotically compact evolutionary system.
Then
\begin{enumerate}
\item[1.]
The strong global attractor $\As$ exists, it is strongly compact, and
$\As=\Aw$.
\end{enumerate}
Furthermore, if $\Dc$ satisfies \={A1}, then
\begin{enumerate}
\item[2.] (Strong uniform tracking property)
for any $\epsilon >0$ and $T>0$, there exists $t_0$, such that for
any $t^*>t_0$, every trajectory $u \in \Dc([0,\infty))$ satisfies
$\ds(u(t), v(t)) < \epsilon$, $\forall t\in [t^*,t^*+T]$, for some
complete trajectory $v \in \Dc((-\infty,\infty))$.
\end{enumerate}
\end{theorem}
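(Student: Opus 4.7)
The plan is to use $\Aw$, provided by Theorem \ref{t:weakA}, as the candidate for $\As$ and to exploit asymptotic compactness to upgrade the weak statements (attraction, compactness, tracking) to their strong counterparts. The underlying mechanism is uniform throughout: whenever a sequence $x_k \in R(t_k) X$ with $t_k \to \infty$ converges weakly to some $x$, asymptotic compactness produces a strongly convergent subsequence whose limit must coincide with $x$ by uniqueness of the weak limit.

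For part 1, I would first show that $\Aw = \ws(X)$. The inclusion $\ws(X) \subset \Aw$ is immediate since strong convergence implies weak. For the reverse, one notes that $\ww(X)$ is $\dw$-closed and $\dw$-attracting on the $\dw$-compact space $X$, so by minimality $\Aw = \ww(X)$; any $x \in \ww(X)$ is then a weak limit of $x_n \in R(t_n) X$ with $t_n \to \infty$, and the mechanism above upgrades this to a strong limit, giving $x \in \ws(X)$. Strong compactness of $\Aw$ then follows by a diagonal extraction: for $\{a_n\} \subset \Aw = \ws(X)$, pick $x_n \in R(t_n) X$ with $t_n \to \infty$ and $\ds(x_n, a_n) < 1/n$, and apply asymptotic compactness to $\{x_n\}$. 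Strong attraction of $\Aw$ is by contradiction: a violation would produce $x_k \in R(t_k) B$ with $t_k \to \infty$ and $\ds(x_k, \Aw) \geq \epsilon$, and asymptotic compactness together with weak attraction would force the strong subsequential limit into $\Aw$. Minimality among $\ds$-closed $\ds$-attracting sets is short: any such $A \subset \Aw$ must contain every $a \in \Aw$, since $a = \lim_k x_k$ strongly for some $x_k \in R(t_k) X$ with $\ds(x_k, A) \to 0$. Hence $\As = \Aw$.

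For part 2, assuming \={A1}, I argue by contradiction. A failure of strong uniform tracking produces $\epsilon, T > 0$, times $t_k^* \to \infty$, and trajectories $u_k \in \Dc([0, \infty))$ such that for every $v \in \Dc((-\infty, \infty))$ there is some $t \in [t_k^*, t_k^* + T]$ with $\ds(u_k(t), v(t)) \geq \epsilon$. Shifting by $w_k(\cdot) := u_k(\cdot + t_k^*)$ and combining \={A1} with the translation property of $\Dc$, a diagonal extraction produces a subsequence (not relabeled) converging in $C([-S, \infty); \Xw)$ for every $S > 0$ to some $v \in \Dc((-\infty, \infty))$. Applying the negated tracking to this $v$ yields $t_k \in [0, T]$ with $\ds(w_k(t_k), v(t_k)) \geq \epsilon$; passing to a subsequence with $t_k \to \hat{t}$, asymptotic compactness together with the weak-uniform convergence $w_k \to v$ on $[0, T]$ and weak continuity of $v$ forces $w_k(t_k) \to v(\hat{t})$ strongly.

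The main obstacle is then to conclude $v(t_k) \to v(\hat{t})$ strongly, since trajectories in $\Dc$ are a priori only $\dw$-continuous. This is resolved by observing that under \={A1}, part 2 of Theorem \ref{t:weakA} forces every complete trajectory to take values in $\Aw$, while part 1 of the present theorem shows $\Aw$ is strongly compact. Since the identity $(\Aw, \ds) \to (\Aw, \dw)$ is a continuous bijection between compact Hausdorff spaces, it is a homeomorphism, so the two topologies coincide on $\Aw$. Consequently $v$ is automatically $\ds$-continuous, giving $v(t_k) \to v(\hat{t})$ strongly, hence $\ds(w_k(t_k), v(t_k)) \to 0$, which contradicts the lower bound $\epsilon$ and closes the proof.
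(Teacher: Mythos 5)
Your proposal is correct; note that the paper itself gives no proof of Theorem \ref{t:strongA} (it is imported from \cite{C09}), so the natural comparison is with the analogous argument the paper does carry out for Theorem \ref{t:Dcasycomstru}. Part 1 follows the standard route: $\Aw=\ww(X)$ holds for any evolutionary system by weak compactness of $X$, and asymptotic compactness upgrades weak limits to strong ones, giving $\ww(X)=\ws(X)$, strong compactness, strong attraction, and minimality, hence $\As=\Aw$. Part 2 is sound but takes a slightly different route from the paper's template: in the proof of Theorem \ref{t:Dcasycomstru} one takes the tracking trajectories $v_n$ furnished by the weak uniform tracking property and compares the strong subsequential limits of $v_n(\hat t_n)$ (which lie in the strongly compact $\Aw$) with those of $u_n(\hat t_n)$ (asymptotic compactness), reaching a contradiction without ever needing strong continuity of a limit trajectory; you instead construct a single complete trajectory $v$ by diagonal extraction (legitimate under \={A1} and the translation property, effectively re-deriving weak tracking for your sequence) and then supply the missing strong continuity of $v$ by noting that $\ds$ and $\dw$ induce the same topology on the strongly compact set $\Aw$, via the continuous-bijection-onto-Hausdorff argument. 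That step is correct, though avoidable: since $v(t_k)\in\Aw$ and $\Aw$ is strongly compact while $v$ is $\dw$-continuous, one already gets $v(t_k)\to v(\hat t)$ strongly along a subsequence, which suffices. Two small points to tighten: when you shift to $w_k(\cdot)=u_k(\cdot+t_k^*)$ you should say explicitly that $\Dc((-\infty,\infty))$ is translation invariant, so the negated tracking statement on $[t_k^*,t_k^*+T]$ applies equally to the shifted window $[0,T]$; and in the minimality step of Part 1 the comparison set $A$ should be an arbitrary $\ds$-closed, $\ds$-attracting set with the conclusion $\Aw\subset A$, not a subset of $\Aw$.
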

\begin{theorem} \cite{C09}\label{t:AComp}
Let $\Dc$ be an evolutionary system satisfying \={A1}, \={A2}, and \={A3} and
such that every complete trajectory is strongly continuous.
Then $\Dc$ is asymptotically compact.
\end{theorem}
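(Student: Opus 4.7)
The plan is to take an arbitrary sequence $x_k \in R(t_k)X$ with $t_k \to \infty$, say $x_k = u_k(t_k)$ with $u_k \in \Dc([0,\infty))$, and extract a strongly convergent subsequence. I would first shift: set $v_k(s) := u_k(s+t_k)$, so that $v_k(0)=x_k$ and, by the translation property~(2) in Definition~\ref{Dc}, $v_k \in \Dc([-t_k,\infty))$. By \={A1} together with translation invariance, for each fixed $T>0$ the family $\{v_k|_{[-T,\infty)}\}_{k : t_k\ge T}$ lies in the $\dw$-compact set $\Dc([-T,\infty)) \subset C([-T,\infty);\Xw)$. A diagonal argument in $T\in\mathbb{N}$ extracts a subsequence (still $v_k$) converging in $C([-T,\infty);\Xw)$ for every $T$ to a limit $v$, which by property~(4) of Definition~\ref{Dc} belongs to $\Dc((-\infty,\infty))$. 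In particular $x_k=v_k(0)\to v(0)$ in $\dw$, i.e.\ weakly in $H$.

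The core of the proof is to upgrade this weak convergence to strong convergence via the Radon--Riesz property, by showing that $|x_k|\to|v(0)|$. Weak lower semicontinuity of $|\cdot|$ yields $|v(0)|\le\liminf_k|x_k|$ at once, so everything reduces to proving $\limsup_k|x_k|\le|v(0)|$. This will combine the three assumptions: \={A2} controls $|v_k(0)|=|x_k|$ from above by values of $|v_k|$ at nearby times $s_0<0$; \={A3} gives strong a.e.\ convergence $v_k(s_0)\to v(s_0)$ so that those controlling values converge in norm; and the assumed strong continuity of the complete trajectory $v$ lets us push $s_0\to 0^-$.

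Concretely, fix $\epsilon>0$ and let $\delta>0$ be the corresponding constant from~\={A2}. Applied to $u_k$ at $t=t_k$, this produces a null set $N_k\subset(-\delta,0)$ such that
\[
|x_k|=|u_k(t_k)|\le|u_k(t_k+s_0)|+\epsilon=|v_k(s_0)|+\epsilon\qquad\text{for all }s_0\in(-\delta,0)\setminus N_k.
\]
Applying \={A3} to the restrictions $v_k|_{[-\delta,\infty)}\to v|_{[-\delta,\infty)}$ in $C([-\delta,\infty);\Xw)$ (after a trivial translation to invoke \={A3} as stated) produces another null set $N'\subset(-\delta,0)$ off which $v_k(s_0)\to v(s_0)$ strongly, hence $|v_k(s_0)|\to|v(s_0)|$. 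The union $N:=N'\cup\bigcup_k N_k$ is still null, so for any $s_0\in(-\delta,0)\setminus N$ one may pass to the limit in $k$ to get $\limsup_k|x_k|\le|v(s_0)|+\epsilon$. Selecting such $s_0$ along a sequence $s_0\to 0^-$ and using the strong continuity of $v$ at $0$ gives $\limsup_k|x_k|\le|v(0)|+\epsilon$, and $\epsilon\to 0$ finishes the norm convergence. Then Radon--Riesz yields $x_k\to v(0)$ strongly, and since every subsequence has a further strongly convergent subsequence, $\{x_k\}$ is relatively strongly compact.

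I expect the main delicate point to be the null-set bookkeeping: the exceptional set in \={A2} depends on the trajectory, so it is essential that the countable union $\bigcup_k N_k$ remains null and that the same $s_0$ witnesses the energy inequality for \emph{all} $k$ simultaneously and also the strong a.e.\ convergence. Everything else is either an application of \={A1} (compactness and extraction of the complete limit trajectory) or a standard weak-plus-norm-convergence argument.
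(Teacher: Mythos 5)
Your proof is correct and is essentially the argument behind the cited result: the paper itself only quotes this theorem from \cite{C09}, and the proof there proceeds exactly as you do — shift the trajectories, use \={A1} plus a diagonal extraction to obtain a complete limit trajectory $v$ with $x_k\to v(0)$ weakly, then combine \={A2}, the a.e.\ strong convergence from \={A3} (with the countable union of null sets handled as you describe), and the strong continuity of $v$ to get $\limsup_k|x_k|\le |v(0)|$, concluding by weak lower semicontinuity and the Radon--Riesz property. No gaps; this matches the reference's approach.
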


\subsection{Nonautonomous case}\label{S:nonauto}
In this subsection we will show that the notion  of evolutionary
system is  naturally applicable to a nonautonomous system.

 Let $\Sigma $ be a parameter set and $\left\{T(s)| s\ge
0\right\}$ be a family of operators acting on $\Sigma$ satisfying
 $T(s)\Sigma=\Sigma$, $\forall s\ge0$. Any element $\sigma\in \Sigma $ will be  called (time) symbol and $ \Sigma $ will be called (time) symbol
 space. For instance, in many applications $\{T(s)\}$ is the translation
 semigroup and $ \Sigma$ is  the translation family
 of the time-dependent items of the considered system or its closure in some appropriate topological space (for more examples see \cite{CV02}, the appendix in \cite{CLR12}).
\begin{definition} \label{d:Dc0}A family of maps $\mathcal E_\sigma $, $\sigma \in \Sigma $ that for every $\sigma\in \Sigma$
associates to each $I\in \mathcal T$ a subset $\mathcal
E_\sigma(I)\subset \mathcal F(I)$ will be called a nonautonomous
evolutionary system  if the following conditions are satisfied:
\begin{enumerate}
\item $\mathcal E_\sigma ([\tau, \infty ))\neq \emptyset,
\forall\, \tau \in \mathbb R$. \item $\mathcal E_ \sigma
(I+s)=\{u(\cdot): u(\cdot+s)\in  \mathcal E_{T(s)\sigma}(I)\},
\forall s\ge 0$. \item $\{u(\cdot)|_{I_2}:u(\cdot)\in \mathcal
E_\sigma(I_1)\}\subset \mathcal E_\sigma(I_2)$, $\forall\, I_1$,
$I_2\in \mathcal T$, $I_2\subset I_1$. \item $\mathcal
E_\sigma((-\infty,\infty))=\{u(\cdot):u(\cdot)|_{[\tau,\infty)}\in
\mathcal E_\sigma([\tau,\infty)),\forall\,\tau \in \mathbb R\}$.
\end{enumerate}
\end{definition}

We will refer to $\mathcal E_\sigma(I)$ as the set of all
trajectories with respect to (w.r.t.) the symbol $\sigma$ on the
time interval $I$. Trajectories in $\mathcal E_\sigma((-\infty,
\infty))$ will be called complete w.r.t. $\sigma$.  For every
$t\ge \tau$, $\tau\in \mathbb R$, $\sigma\in \Sigma$, define a map
\begin{align*}R_\sigma(t, \tau ): P(X)&\rightarrow P(X),\\
R_\sigma(t,\tau )A:=\{u(t):u(\tau )\in A, u&\in \mathcal E_\sigma
([\tau,\infty)\}, \ \ A\subset X.
\end{align*}
Similarly,  the assumptions on $\mathcal E_\sigma$, $\sigma\in
\Sigma $  imply that $R_\sigma (t,\tau)$ enjoys  the following
property:
\begin{equation}\label{eq:propRsigma(T)}R_\sigma
(t,\tau)A\subset R_\sigma(t, s)R_\sigma(s,\tau)A, \ \ A\subset
X, \qquad \forall t\ge s \ge \tau , \tau\in \mathbb R.\end{equation}

Let us now show how a nonautonomous evolutionary system can
be defined in the classical case where the uniqueness of trajectories
holds.

Let $H$ be a phase space (a separable reflexive Banach space).
Consider a process of a two-parameter family of single-valued operators
$U_{\sigma_0}(t,\tau): H\rightarrow H$, satisfying the following conditions:
\begin{equation}\label{definitionprocess}\begin{split}
U_{\sigma_0}(t,s)\circ U_{\sigma_0}(s,\tau)&=U_{\sigma_0}(t,\tau),
\qquad \forall\, t\ge s\ge\tau,\ \tau\in\mathbb{R},
\\U_{\sigma_0}(\tau,\tau)&=\text{Identity
operator,}\qquad  \tau\in\mathbb{R}.
\end{split}
\end{equation}
Here $\sigma_0$ is a fixed symbol, which is usually the collection
of all time-dependent terms of a considered system. So we assume
that it is a function on $\mathbb R$ with values in some space. A
trajectory $u$ on $[\tau, \infty)$ is a mapping from $[\tau,\infty)$
to $H$, such that
\begin{equation}\label{solutiontoprocess} u(t)=U_{\sigma_0}
(t,\tau)u(\tau),\qquad t\ge\tau.
\end{equation}
 A ball $B\subset H$ is called a uniformly (w.r.t. $\tau\in \mathbb R$)
absorbing ball if for any  bounded set $A\subset H$, there exists a
$t_0=t_0(A)$, such that,
\begin{equation}\label{d:initial absorbing}\bigcup_{\tau\in \mathbb R}\bigcup_{t\geq
t_0}U_{\sigma_0}(t+\tau,\tau)A\subset B.
\end{equation} Assume that
the  process is dissipative, i.e., there exists a uniformly (w.r.t.
$\tau\in \mathbb R$) absorbing ball $B$. Since we are interested in
a long-time behavior of solutions, it is enough to consider a
restriction of the  process to $B$. A uniform (w.r.t. $\tau\in
\mathbb R$) attractor $\mathcal A$ of  the process is a minimal
closed set satisfying that, for any $A\subset B$ and $\epsilon>0$
there exists  $t_0=t_0(\epsilon,A)$, such that
\begin{equation}\label{d:initial attractor}
\bigcup_{\tau\in \mathbb R}\bigcup_{t\geq
t_0}U_{\sigma_0}(t+\tau,\tau)A\subset B_{H}(\mathcal A, \epsilon).
\end{equation}
Now denote by $\Sigma$ the translation family
$\{\sigma_0(\cdot+h)|h\in \mathbb R\}$ of $\sigma_0$ and define
\begin{equation}\label{d:process}U_{\sigma_0(\cdot+h)}(t,\tau):=U_{\sigma_0(\cdot)}(t+h,\tau+h),\qquad \forall\,
t\ge\tau,\ \tau\in\mathbb{R},\ h\in\mathbb R.
\end{equation}Due to the
uniqueness of the trajectories, for any $\sigma\in \Sigma$, the
family of operators $U_{\sigma}(t,\tau)$ define a process, i.e.,
(\ref{definitionprocess}) is valid with $\sigma$ substituted for
$\sigma_0$. Obviously,
 $T(s)\Sigma=\Sigma$, $\forall s\ge 0$ and the following
 translation identity holds,
\begin{equation}\label{d:processId}
U_{\sigma}(t+s,\tau+s)=U_{T(s)\sigma}(t,\tau),\qquad \forall\, \sigma\in
\Sigma,\ t\ge\tau,\ \tau\in\mathbb{R},\ s\ge 0,
\end{equation}
where $\{T(s)\}_{s\ge 0}$ is the translation semigroup.  We now
consider the family of processes $\{U_\sigma(t,\tau)\}$, $\sigma\in
\Sigma$. Note that
(\ref{d:initial absorbing}) is equivalent to that for any $\tau\in
\mathbb R$ and bounded set $A\subset H$ there exists
$t_0=t_0(A)\geq\tau$, such that
\begin{equation}\label{d: sigma0 absorbing}\bigcup_{\sigma\in \Sigma}\bigcup_{t\geq
t_0}U_{\sigma}(t+\tau,\tau)A\subset B,
\end{equation}
i.e., $B\subset H$ is also a uniformly (w.r.t. $\sigma\in
\Sigma$) absorbing ball for the family of processes
$\{U_\sigma(t,\tau)\}$, $\sigma\in \Sigma$. Similarly,
(\ref{d:initial attractor}) is equivalent to that for any $\tau\in
\mathbb R$, $A\subset B$ and $\epsilon>0$ there exists
$t_0=t_0(\epsilon,A)$, such that
\begin{equation}\label{d:sigma0 attractor}
\bigcup_{\sigma\in \Sigma}\bigcup_{t\geq
t_0}U_{\sigma_0}(t+\tau,\tau)A\subset B_{H}(\mathcal A, \epsilon),
\end{equation}
i.e, $\mathcal A$  is also a uniform (w.r.t. $\sigma\in \Sigma$)
attractor for the family of processes $\{U_\sigma(t,\tau)\}$,
$\sigma\in \Sigma$. Now take $X=B$. Note that since $H$ is a separable
reflexive Banach space, both the strong and the weak topologies on
$X$ are metrizable. Define the maps $\mathcal E_\sigma$, $\sigma\in
\Sigma$ in the following way:
$$\mathcal E_{\sigma} ([\tau,\infty)) := \{u(\cdot):
u(t)=U_{\sigma}(t,\tau)u_\tau, u_\tau \in X, t\ge \tau
\}.$$
 Conditions 1--4 in the  definition of the nonautonomous evolutionary system $\mathcal E_{\sigma}$, $\sigma\in \Sigma$ follow from the definition of the family of processes
$\{U_\sigma(t,\tau)\}$, $\sigma \in \Sigma$. In addition, by
(\ref{solutiontoprocess}), we have
$$R_\sigma(t,\tau)A=U_\sigma (t,\tau)A,\quad\forall\, A\subset
X, \, \sigma\in \Sigma, \, t\ge\tau, \,\tau\in \mathbb R.$$ Hence,
the process $\{U_{\sigma_0}(t,\tau)\}$ always defines a
nonautonomous  evolutionary system  with the symbol space being the
translation family $\Sigma$ of $\sigma_0$.

In the theory of Chepyzhov and Vishik \cite{CV94, CV02}, when studying the existence and other properties,
such as the invariance of the uniform (w.r.t. $\tau\in \mathbb R$) attractor of
the process $\{U_{\sigma_0}(t,\tau)\}$, one considers a family
of processes $\{U_\sigma(t,\tau)\}$, $\sigma\in \bar{\Sigma} $ with the
symbol space $\bar{\Sigma}$ being the closure of $\Sigma$ in some
appropriate topology space. Accordingly, one considers a family of
equations with symbols in the strongly compact closure of the translation family $\Sigma$ of the
original symbol $\sigma_0$ in a corresponding functional space. In
general, suppose that a family of processes $\{U_\sigma(t,\tau)\}$,
$\sigma\in \bar{\Sigma} $ satisfies the following natural translation
identity:
\begin{equation*}U_{\sigma}(t+s,\tau+s)=U_{T(s)\sigma}(t,\tau),\qquad
\forall\, \sigma\in \bar{\Sigma},\ t\ge\tau,\ \tau\in\mathbb{R},\ s\ge 0,
\end{equation*}
and $T(s)\bar{\Sigma}=\bar{\Sigma}$, $\forall\,s\geq 0$. Proceeding in a similar manner with
$\bar{\Sigma}$ replacing $\Sigma$,  it is easy to check
that the family of processes $\{U_\sigma(t,\tau)\}$,
$\sigma\in \bar{\Sigma} $ also defines a nonautonomous  evolutionary
system with symbol space $\bar{\Sigma}$.

\subsection{Reducing a nonautonomous evolutionary  system   to an
autonomous evolutionary  system}

In this subsection we show that any nonautonomous evolutionary
system can be viewed as an (autonomous) evolutionary system.
We start with the following key lemma.
\begin{lemma}\label{reducing initial value lemma }
Let $\tau_0\in \Bbb R$ be fixed. Then for any $\tau\in \Bbb R$ and
$\sigma\in \Sigma$, there exists at least one $\sigma'\in \Sigma$
such that
\begin{equation}\label{differentsymbol}\mathcal
E_\sigma([\tau,\infty))=\{u(\cdot): u(\cdot+\tau-\tau_0)\in
\mathcal E_{\sigma'}([\tau_0,\infty))\}.\end{equation}
\end{lemma}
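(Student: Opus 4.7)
The plan is to derive the identity directly from condition~(2) of Definition~\ref{d:Dc0}, which is the translation identity $\mathcal{E}_\sigma(I+s) = \{u(\cdot) : u(\cdot+s) \in \mathcal{E}_{T(s)\sigma}(I)\}$ for $s \geq 0$, combined with the surjectivity assumption $T(s)\Sigma = \Sigma$. The target equality amounts to shifting time by $\tau - \tau_0$, so I would split into cases according to the sign of this shift.

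First, in the case $\tau \geq \tau_0$, set $s := \tau - \tau_0 \geq 0$ and apply condition~(2) directly with $I = [\tau_0, \infty)$. Since $[\tau_0, \infty) + s = [\tau, \infty)$, this immediately yields
\begin{equation*}
\mathcal{E}_\sigma([\tau, \infty)) = \{u(\cdot) : u(\cdot + \tau - \tau_0) \in \mathcal{E}_{T(\tau-\tau_0)\sigma}([\tau_0, \infty))\},
\end{equation*}
so the choice $\sigma' := T(\tau - \tau_0)\sigma \in T(\tau-\tau_0)\Sigma = \Sigma$ works.

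The case $\tau < \tau_0$ is the real obstacle, because condition~(2) is one-sided: it only provides a translation identity for nonnegative shifts, and here we need to go backwards by $s := \tau_0 - \tau > 0$. The key observation is that the hypothesis $T(s)\Sigma = \Sigma$ supplies a preimage. I would choose any $\sigma' \in \Sigma$ with $T(\tau_0 - \tau)\sigma' = \sigma$, and then apply condition~(2) with symbol $\sigma'$, interval $I = [\tau, \infty)$, and shift $s$. Since $[\tau, \infty) + s = [\tau_0, \infty)$ and $T(s)\sigma' = \sigma$, this produces
\begin{equation*}
\mathcal{E}_{\sigma'}([\tau_0, \infty)) = \{u(\cdot) : u(\cdot + s) \in \mathcal{E}_\sigma([\tau, \infty))\}.
\end{equation*}
Setting $v := u(\cdot + s) \in \mathcal{E}_\sigma([\tau, \infty))$, so that $u(\cdot) = v(\cdot - s) = v(\cdot + \tau - \tau_0)$, the identity reads exactly~\eqref{differentsymbol}. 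One need only check that the substitution respects domains, i.e.\ that $v(\cdot + \tau - \tau_0)$ is well-defined on $[\tau_0, \infty)$ whenever $v$ is on $[\tau, \infty)$; this is immediate since $\tau_0 + (\tau - \tau_0) = \tau$.

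The main (and essentially only) difficulty is Case~2, where the naive application of property~(2) fails because it requires $s \geq 0$; the invariance $T(s)\Sigma = \Sigma$ is what compensates for this one-sidedness. Note that the lemma asserts only existence, not uniqueness, of $\sigma'$, which is consistent with the fact that $T(s)$ need not be injective and so the preimage of $\sigma$ may be multivalued.
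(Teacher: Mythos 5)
Your proof is correct and follows essentially the same route as the paper: for $\tau\geq\tau_0$ take $\sigma'=T(\tau-\tau_0)\sigma$ via condition~(2), and for $\tau<\tau_0$ use the invariance $T(\tau_0-\tau)\Sigma=\Sigma$ to pick a preimage $\sigma'$ and apply condition~(2) with symbol $\sigma'$. The only difference is that you spell out explicitly the change of variables showing the resulting identity is equivalent to the stated one, which the paper leaves implicit.
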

\begin{proof} i). Case $\tau\ge \tau_0$. Thanks to condition 2 in the
definition of the nonautonomous evolutionary system
we can just take $\sigma'=T(\tau-\tau_0)\sigma$.

ii). Case $\tau< \tau_0$. Since $\Sigma$ is invariant, there
exists at least one $\sigma'$ such that
$T(\tau_0-\tau)\sigma'=\sigma $. Again,  by condition 2 in the
definition of  $\mathcal E_\sigma$, $\sigma\in \Sigma$, we have
$$\mathcal E_{\sigma'}([\tau_0,\infty))=\{u(\cdot):
u(\cdot+\tau_0-\tau)\in \mathcal E_{\sigma}([\tau,\infty))\},$$
which is equivalent to (\ref{differentsymbol}).\end{proof}

\begin{remark} In many applications, the elements of the symbol space
$\Sigma $ are functions on the real line and $\{T(s)\}_{s\ge0}$ is
the translation semigroup. If the existence of  $\sigma'$ is
unique in Lemma~\ref{reducing initial value lemma }, corresponding to the backward uniqueness property of the system,
$\{T(s)\}_{s\ge0}$ can be extended to a group and condition 2 in the
definition of the nonautonomous evolutionary system is valid for
$s\in \mathbb R$.
\end{remark}

It follows from Lemma \ref{reducing initial value lemma } that
$$\bigcup_{\sigma\in \Sigma}R_\sigma(t,0)A=\bigcup_{\sigma\in \Sigma}R_\sigma(t+\tau, \tau)A,\quad \forall\, A\subset X, \tau\in \mathbb R, t\ge 0.$$
So it is convenient to denote
$$R_\Sigma(t)A:=\bigcup_{\sigma\in \Sigma} R_\sigma
(t,0)A,\quad \forall\, A\subset X, \ t\geq 0.$$ Similarly, we denote
\begin{align*}\mathcal E_\Sigma (I):=\bigcup_{\sigma\in \Sigma}\mathcal E_\sigma (I),\quad \forall\, I\in \mathcal
T.\end{align*}
Now we define an (autonomous) evolutionary system $\mathcal E$ in the following way:
$$\mathcal E (I):=\mathcal E_\Sigma (I),\quad
\forall\, I\in \mathcal T.$$
It is easy to check that  all the conditions in Definition~\ref{Dc} are satisfied.
Moreover, for this evolutionary
system we obviously have
\begin{equation}\label{reducingtoR(t)}R(t )A=R_\Sigma(t )A, \quad \forall\, A\subset X, t\ge0.
\end{equation}
Now the notions of invariance, quasi-invariance, and a global
attractor for $\mathcal{E}$ can be extended to the nonautonomous
evolutionary system $\{\mathcal{E}_\sigma\}_{\sigma \in \Sigma}$.
For instance, the global attractors for  evolutionary systems
defined by a process and a family of processes  in
Section \ref{S:nonauto} are the uniform (w.r.t. the initial time)
attractor and uniform (w.r.t. the symbol space) attractor,
respectively. The global attractor in the nonautonomous
case will be conventionally called a uniform global attractor (or
simply a global attractor). Other than that we will not distinguish
between autonomous and nonautonomous evolutionary systems and denote
an evolutionary system with a symbol space $\Sigma$ by
$\mathcal{E}_{\Sigma}$ and its attractor by $\mathcal A^{\Sigma}$ if it is necessary.

The advantage of such an approach will be clear
in the next section, where we will see that for some evolutionary systems
constructed from nonautonomous dynamical systems the associated symbol spaces are
not known.

\section{Closure of an evolutionary system}\label{s:closing}
In this section we will investigate evolutionary systems $
\mathcal E$ satisfying the following property:
\begin{itemize}
\item[A1] $\Dc([0,\infty))$ is a precompact set in
$ C([0,\infty); \Xw)$.
\end{itemize}
In addition, we will present some results for evolutionary systems satisfying these
additional properties:
\begin{itemize}
\item[A2] (Energy inequality)
Assume that $X$ is a set in some
Banach space $H$ satisfying the Radon-Riesz property
with the norm denoted by $|\cdot|$, such
that $\ds(x,y)=|x-y|$ for $x,y \in X$ and $\dw$ induces the weak
topology on $X$.
Assume also that for any $\epsilon
>0$, there exists $\delta$, such that for every $u \in
\Dc([0,\infty))$ and $t>0$,
\[
|u(t)| \leq |u(t_0)| + \epsilon,
\]
for $t_0$ a.e. in $ (t-\delta, t)$.
\item[A3] (Strong convergence
a.e.) Let $u_k \in \Dc([0,\infty))$, be such that
$u_k$ is $\mathrm d_{ C([0, T];\Xw)}$-Cauchy sequence in
$ C([0, T];\Xw)$ for some $T>0$. Then $u_k(t)$ is
$\ds$-Cauchy sequence a.e. in $[0,T]$.
\end{itemize}
Such kinds of evolutionary systems are closely related to the
concept of the uniform w.r.t. the initial time global attractor
for a nonautonomous system, initiated by Haraux. For instance, as
shown in the previous section, the process $\{U_{\sigma_0}(t,\tau)\}$
defines an evolutionary system $\Dc_{\Sigma}$ whose uniform global attractor
is the uniform w.r.t. the initial time global attractor due to Haraux.
However, instead of
condition \={A1}, $\Dc_{\Sigma}$ usually satisfies only A1.
The Chepyzhov-Vishik approach requires finding a suitable closure
$\bar{\Sigma}$ of the symbol space in some topological space.
In \cite{Lu07, CL09} open problems indicate that there may
not exist a symbol space $\bar{\Sigma}$, such that  a family
of processes $\{U_\sigma(t,\tau)\}$, $\sigma \in \bar{\Sigma}$ can  be
defined, since less restriction on the time-dependent nonlinearity there cannot guarantee the continuity of all  time-dependent nonlinearities of the symbols in  the symbol space $\bar{\Sigma}$. Later we will see that even for evolutionary systems taking
a closure of the symbol space is not always appropriate to study the uniform global
attractor.

Denote by $\Ab$ the uniform $\db$-global attractor of $\Dc$. We
will investigate the existence and the structure of  $\Ab$ using
a new method that involves taking a closure of  the evolutionary system $\Dc$. Let
\[
\bar{\Dc}([\tau,\infty)):=\overline{\Dc([\tau,\infty))}^{ C([\tau,
\infty);\Xw)},\quad\forall\,\tau\in\mathbb R.
\]
It can be
checked that $\bar{\Dc}$ is also an evolutionary system. We call
$\bar{\Dc}$ the closure of the evolutionary system $\Dc$, and add the top-script $\bar{\ }$ to the corresponding
notations in previous subsections  for  $\bar{\Dc}$. For instance, we denote by $\bar{\mathcal
A}_{\bullet}$ the uniform $\db$-global attractor for $\bar{\Dc}$.

First, we clearly have the following:
\begin{lemma} \label{l:eqA01A1}
If $\Dc $ satisfies A1, then $\bar{\Dc}$ satisfies \={A1}.
\end{lemma}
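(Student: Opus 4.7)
The plan is to unwind the definitions and observe that the lemma is essentially a tautology once the two notions of precompactness and closure are aligned. The hypothesis A1 says that $\mathcal{E}([0,\infty))$ is precompact in the metric space $C([0,\infty);X_{\mathrm w})$, which by the standard definition means precisely that its closure in $C([0,\infty);X_{\mathrm w})$ is compact. The conclusion $\bar{\text{A1}}$ says that $\bar{\mathcal{E}}([0,\infty))$ is compact in $C([0,\infty);X_{\mathrm w})$.

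The key observation is that these two sets coincide by definition: we have set
\[
\bar{\mathcal{E}}([0,\infty)) := \overline{\mathcal{E}([0,\infty))}^{\, C([0,\infty);X_{\mathrm w})},
\]
so the closure appearing in the precompactness hypothesis is exactly the object whose compactness is being asserted. Thus the proof is a one-line identification: A1 gives that $\overline{\mathcal{E}([0,\infty))}^{\, C([0,\infty);X_{\mathrm w})}$ is compact, and this closure is by definition $\bar{\mathcal{E}}([0,\infty))$, yielding $\bar{\text{A1}}$.

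There is no real obstacle here; the lemma is stated explicitly because the construction $\bar{\mathcal{E}}$ is introduced precisely in order to upgrade precompactness (the best one can typically verify for a raw family of Leray-Hopf weak solutions, after proving uniform equicontinuity in the weak topology on bounded sets) to genuine compactness, which is what the structural theorems of Section 2 (Theorems \ref{t:weakA}, \ref{t:strongA}, \ref{t:AComp}) require as a hypothesis. The only thing worth double-checking in passing is that $\bar{\mathcal{E}}$ is well-defined as an evolutionary system (already asserted in the text just above the lemma), so that speaking of ``$\bar{\mathcal{E}}$ satisfies $\bar{\text{A1}}$'' is meaningful; granted that, the proof reduces to the definitional identification above.
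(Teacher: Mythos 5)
Your argument is correct and coincides with the paper's intent: the paper states this lemma without proof ("we clearly have the following") precisely because, as you observe, $\bar{\Dc}([0,\infty))$ is by definition the closure of $\Dc([0,\infty))$ in $C([0,\infty);\Xw)$, and A1 (precompactness) says exactly that this closure is compact, which is \={A1} for $\bar{\Dc}$. Your side remarks (completeness of the ambient space since $\Xw$ is compact metric, and that $\bar{\Dc}$ is indeed an evolutionary system as asserted in the text) are appropriate and nothing further is needed.
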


Now we will obtain the structure of the weak global attractor $\Aw$ as well as the weak $\omega$-limit of any weakly open set:

\begin{theorem} \label{t:baseclosing}
Let $\Dc$ be an evolutionary system satisfying A1. Then
$\ww(A)=\bar{\omega}_{\mathrm
w}(A)$ for any weakly open set $A$ in $X$. In particular,
\[
\Aw=\bar{\mathcal A}_{\mathrm w} =\{ u_0: \ u_0=u(0) \mbox{ for some } u \in
\bar{\Dc}((-\infty, \infty))\}.
\]
\end{theorem}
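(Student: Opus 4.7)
The overall strategy is to exploit Lemma \ref{l:eqA01A1}: because $\bar{\Dc}$ satisfies \={A1}, Theorem \ref{t:weakA} applied to $\bar{\Dc}$ already gives $\bar{\mathcal A}_\w = \bar\omega_\w(X) = \{u_0 : u_0=u(0) \text{ for some } u\in\bar{\Dc}((-\infty,\infty))\}$. Thus the real content of the theorem reduces to two equalities: (i) $\ww(A)=\bar\omega_\w(A)$ for every weakly open $A\subset X$, and (ii) $\Aw=\bar{\mathcal A}_\w$.

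For (i), the inclusion $\ww(A)\subset\bar\omega_\w(A)$ is immediate because $\Dc([0,\infty))\subset\bar{\Dc}([0,\infty))$ gives $R(t)A\subset\bar R(t)A$. For the reverse inclusion, I take $x\in\bar\omega_\w(A)$ and find sequences $t_n\to\infty$ and $u_n\in\bar{\Dc}([0,\infty))$ with $u_n(0)\in A$ and $u_n(t_n)\to x$ weakly. By definition of $\bar{\Dc}$, each $u_n$ is a $C([0,\infty);\Xw)$-limit of some $v_n^k\in\Dc([0,\infty))$; since convergence in that space is uniform-on-compacts weak convergence, both $v_n^k(0)\to u_n(0)$ and $v_n^k(t_n)\to u_n(t_n)$ weakly as $k\to\infty$ for each fixed $n$. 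Weak openness of $A$ gives $v_n^k(0)\in A$ for all sufficiently large $k$, and a diagonal extraction yields indices $k_n$ such that $v_n:=v_n^{k_n}$ satisfies $v_n(0)\in A$ and $\dw(v_n(t_n),u_n(t_n))<1/n$. Then $v_n(t_n)\in R(t_n)A$ and $v_n(t_n)\to x$ weakly, so $x\in\ww(A)$.

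For (ii), the set $\bar{\mathcal A}_\w$ is weakly closed and weakly attracts every $B\subset X$ under $\bar R$; since $R(t)B\subset\bar R(t)B$, it also weakly attracts under $R$, and minimality of $\Aw$ forces $\Aw\subset\bar{\mathcal A}_\w$. For the opposite inclusion, take $x\in\bar{\mathcal A}_\w=\bar\omega_\w(X)=\ww(X)$ (using (i) with the weakly open set $A=X$). Then there exist $t_n\to\infty$ and $x_n\in R(t_n)X$ with $x_n\to x$ weakly. Because $\Aw$ weakly attracts $X$, one has $\dw(x_n,\Aw)\to0$; choosing $y_n\in\Aw$ with $\dw(x_n,y_n)\to0$ and using the triangle inequality, $y_n\to x$ weakly, and weak closedness of $\Aw$ inside the weakly compact $X$ yields $x\in\Aw$.

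The most delicate point is the diagonal extraction in (i): the approximation $v_n^k\to u_n$ is controlled in $C([0,\infty);\Xw)$ only as $k\to\infty$ with $n$ fixed, yet I must choose $k_n$ so that simultaneously $v_n^{k_n}(0)\in A$ (which uses weak openness at a single time) and $\dw(v_n^{k_n}(t_n),u_n(t_n))<1/n$ (uniform closeness at a moving time $t_n$). The uniform-on-compacts nature of the metric on $C([0,\infty);\Xw)$ is exactly what makes this coordination possible, since for each fixed $n$ the time $t_n$ lies in a compact interval on which $v_n^k\to u_n$ uniformly in the $\dw$ metric. Once this diagonal argument goes through, the remaining steps are routine attractor manipulations.
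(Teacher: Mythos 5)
Your proof is correct and follows essentially the same route as the paper: approximate trajectories of $\bar{\Dc}$ by trajectories of $\Dc$ in $C([0,\infty);\Xw)$, using the weak openness of $A$ to keep the initial data inside $A$, which yields $\ww(A)=\bar{\omega}_{\mathrm w}(A)$, and then transfer the structure from Theorem~\ref{t:weakA} applied to $\bar{\Dc}$ (which satisfies \={A1} by Lemma~\ref{l:eqA01A1}). The only minor difference is in establishing $\Aw=\bar{\mathcal A}_{\mathrm w}$: the paper quotes the characterizations $\Aw=\ww(X)$ and $\bar{\mathcal A}_{\mathrm w}=\bar{\omega}_{\mathrm w}(X)$ from \cite{C09}, whereas you rederive the two inclusions directly from minimality and the weak attracting property, a harmless and slightly more self-contained variant.
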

\begin{proof}From the definition of $\bar{\Dc}$ it follows
that
\[\overline{\bigcup_{t\geq T}R(t)A}^{\w}\subset\overline{\bigcup_{t\geq T}\bar{R}(t)A}^{\w},
\quad \forall\, T\geq 0.\] Hence, $\ww(A)\subset
\bar{\omega}_{\mathrm w}(A)$.

Now take any $x\in \bar{\omega}_{\mathrm w}(A)$. There exist
sequences $t_n\rightarrow\infty$ as $n\rightarrow \infty$ and $x_n\in
\bar{R}(t_n)A$, such that $x_n\rightarrow x$ in $\dw$-metric as
$n\rightarrow \infty$. By definition of $\bar{\Dc}$ there exist
$y_n\in R(t_n)A$ satisfying
\[
\dw(y_n,x_n)\leq \frac{1}{n}.
\]
Therefore,
\begin{equation}\label{i:eqwsww}
\dw(y_n,x)\leq\dw(y_n,x_n)+\dw(x_n,x)\leq
\frac{1}{n}+\dw(x_n,x)\rightarrow 0, \quad \mbox{as } n\rightarrow
\infty,
\end{equation}
which means that $x\in \ww(A)$. Hence, $\bar{\omega}_{\mathrm
w}(A)\subset\ww(A) $. This concludes the first part of the proof.

The second part of the theorem follows from Theorem~\ref{t:weakA} and the facts that the weak
global attractors $\Aw$ and $\bar{\mathcal A}_{\mathrm w}$ exist
and equal to $\ww(X)$ and $\bar{\omega}_{\mathrm w}(X)$,
respectively.
\end{proof}

If $\Dc$ is asymptotically compact then Theorem~\ref{t:strongA} immediately
implies that
the strong uniform global attractor $\As$ exists and $\As=\Aw$. It is also easy
to see that the strong attracting property in Theorem~\ref{t:strongA} holds
under the weaker assumption A1:

\begin{theorem}[Strong uniform tracking property]\label{t:Dcasycomstru}
Let $\Dc$ be an asymptotically compact evolutionary system
satisfying A1. Let $\bar{\Dc}$ be the closure of the  evolutionary
system $\Dc$. Then for any
$\epsilon>0$ and $T>0$, there exists $t_0$, such that for any
$t^*>t_0$, every trajectory $u \in \Dc([0,\infty))$ satisfies
\[
\ds(u(t), v(t)) < \epsilon, \quad\forall t\in [t^*,t^*+T],
\]
for some complete trajectory $v \in \bar{\Dc}((-\infty,\infty))$.
\end{theorem}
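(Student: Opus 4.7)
My plan is to argue by contradiction and combine a shift-and-extract argument with the asymptotic compactness of $\Dc$ and the structure theorems already available for $\bar{\Dc}$. Suppose the conclusion fails: there exist $\epsilon_0>0$, $T>0$, a sequence $t_n^*\to\infty$, and trajectories $u_n\in\Dc([0,\infty))$ such that for every $v\in\bar{\Dc}((-\infty,\infty))$,
\[
\sup_{t\in[t_n^*,t_n^*+T]}\ds(u_n(t),v(t))\ge\epsilon_0.
\]
I would first shift by setting $\tilde u_n(s):=u_n(s+t_n^*)\in\Dc([-t_n^*,\infty))$ (via condition~2 of Definition~\ref{Dc}). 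Since A1 and the shift-invariance of the metric on $C([\tau,\infty);\Xw)$ make each $\Dc([\tau,\infty))$ precompact, a diagonal extraction over $\tau=-1,-2,\dots$ yields a subsequence (still indexed by $n$) and a function $v:\mathbb R\to X$ with $\tilde u_n\to v$ in $C([\tau,\infty);\Xw)$ for every $\tau$; by the definition of $\bar{\Dc}$ and condition~4 of Definition~\ref{Dc}, $v\in\bar{\Dc}((-\infty,\infty))$.

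Next I would apply the contradiction hypothesis to the shifted complete trajectory $v(\cdot-t_n^*)\in\bar{\Dc}((-\infty,\infty))$ (whose membership follows by two applications of condition~2 of Definition~\ref{Dc} with $I=(-\infty,\infty)$) and change variables to obtain $\sup_{s\in[0,T]}\ds(\tilde u_n(s),v(s))\ge\epsilon_0$, then pick $s_n\in[0,T]$ with $\ds(\tilde u_n(s_n),v(s_n))\ge\epsilon_0/2$ and pass to a further subsequence so that $s_n\to s_*\in[0,T]$. Because $\tilde u_n(s_n)=u_n(s_n+t_n^*)\in R(s_n+t_n^*)X$ with $s_n+t_n^*\to\infty$, the asymptotic compactness of $\Dc$ yields, along a further subsequence, $\tilde u_n(s_n)\to z$ strongly. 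The uniform $\dw$-convergence $\tilde u_n\to v$ on $[0,T]$ together with the $\dw$-continuity of $v$ forces $\dw(\tilde u_n(s_n),v(s_*))\to 0$, so by uniqueness of weak limits $z=v(s_*)$, and hence $\ds(\tilde u_n(s_n),v(s_*))\to 0$.

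The main obstacle, and the heart of the argument, is promoting $v$ from $\dw$- to $\ds$-continuity at $s_*$, since otherwise the bound $\ds(\tilde u_n(s_n),v(s_n))\le\ds(\tilde u_n(s_n),v(s_*))+\ds(v(s_*),v(s_n))$ cannot be closed. For this I would marshal the structure results already at hand: Theorem~\ref{t:strongA} applied to $\Dc$ gives that the strong global attractor $\As=\Aw$ exists and is strongly compact, and Theorem~\ref{t:baseclosing} identifies $\Aw$ with $\bar{\mathcal A}_{\mathrm w}$. Since $\bar{\Dc}$ satisfies \={A1} by Lemma~\ref{l:eqA01A1}, Theorem~\ref{t:weakA} identifies $\bar{\mathcal A}_{\mathrm w}$ with the set of initial values of complete $\bar{\Dc}$-trajectories; combined with condition~2 of Definition~\ref{Dc}, this pins $v(t)\in\bar{\mathcal A}_{\mathrm w}$ for every $t\in\mathbb R$. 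On the strongly compact set $\bar{\mathcal A}_{\mathrm w}$, the identity map from the strong topology to the weak topology is a continuous bijection from a compact space to a Hausdorff space, hence a homeomorphism, so the two topologies agree there and $v:\mathbb R\to\bar{\mathcal A}_{\mathrm w}$ is $\ds$-continuous. Therefore $\ds(v(s_n),v(s_*))\to 0$, and the triangle inequality now yields $\ds(\tilde u_n(s_n),v(s_n))\to 0$, contradicting the choice of $s_n$.
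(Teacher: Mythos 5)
Your proposal is correct: the shift-and-diagonal extraction does produce a complete trajectory $v\in\bar{\Dc}((-\infty,\infty))$ (precompactness of $\Dc([\tau,\infty))$ follows from A1 and condition~2 of Definition~\ref{Dc}, and membership of $v$ follows from conditions~3 and~4 for $\bar{\Dc}$), applying the contradiction hypothesis to the shift $v(\cdot-t_n^*)$ is legitimate, and the final step—$v(t)\in\Aw=\As$ for all $t$ by Theorems~\ref{t:weakA}/\ref{t:baseclosing}/\ref{t:strongA}, with strong and weak topologies coinciding on the strongly compact attractor, hence $\ds$-continuity of $v$—closes the triangle inequality as claimed. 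The paper's proof follows the same contradiction skeleton but executes it differently in two respects. First, instead of re-deriving the weak approximation by hand, it directly invokes the weak uniform tracking property of Theorem~\ref{t:weakA} for $\bar{\Dc}$ (available by Lemma~\ref{l:eqA01A1}), which yields for each $n$ a possibly different complete trajectory $v_n$ with $\sup_{t\in[t_n,t_n+T]}\dw(u_n(t),v_n(t))\to0$. Second, it chooses times $\hat t_n\in[t_n,t_n+T]$ with $\ds(u_n(\hat t_n),v_n(\hat t_n))\ge\epsilon/2$ and compares the two point sequences at the \emph{same} times: $\{u_n(\hat t_n)\}$ is relatively strongly compact by asymptotic compactness, and $\{v_n(\hat t_n)\}\subset\Aw$ is relatively strongly compact because $\Aw$ is strongly compact (Theorems~\ref{t:baseclosing} and~\ref{t:strongA}); since their mutual weak distance tends to zero, strong limit points must coincide, contradicting the lower bound. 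Because the comparison happens at identical times, the paper never needs strong continuity of any limiting trajectory, whereas your single-trajectory route compares at varying times $s_n$ and therefore genuinely needs the topological coincidence argument you supply. Your variant buys a slightly stronger intermediate fact (a whole shifted subsequence converging to one complete trajectory, essentially re-proving the tracking property) at the cost of the diagonalization and the extra lemma; the paper's version is shorter by leaning on the already established Theorem~\ref{t:weakA}.
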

\begin{proof}
Suppose to the contrary that there
exist $\epsilon>0$, $T>0$, and sequences $u_n\in \Dc([0,\infty))$,
$t_n\rightarrow \infty$ as $n\rightarrow\infty$, such that
\begin{equation}\label{i:Dc0strongtra}
\sup_{t\in [t_n,\, t_n+T]}\ds(u_n(t), v(t)) \geq \epsilon,
\quad\forall n,
\end{equation}
for all $v \in \bar{\Dc}((-\infty,\infty))$.

On the other hand, since $\bar{\Dc}$ satisfies \={A1}, the weak
uniform tracking property in Theorem~\ref{t:weakA} implies that
there exists a sequence $v_n \in \bar{\Dc}((-\infty,\infty)) $, such
that
\begin{equation}\label{i:Dc0weaktra}
 \lim_{n\rightarrow \infty} \sup_{t\in [t_n,\, t_n+T]}\dw(u_n(t),
 v_n(t))=0.
 \end{equation}
 Thanks to (\ref{i:Dc0strongtra}), there exists a sequence
 $\hat{t}_n\in [t_n,t_n+T]$, such that
\begin{equation}\label{i:Dc0strongtratn}
\ds(u_n(\hat{t}_n), v_n(\hat{t}_n)) \geq \epsilon/2, \quad\forall n,
\end{equation}

Now note that $\{u_n(\hat{t}_n)\}$ is relatively strongly compact due
to the asymptotic compactness of $\Dc$. In addition, Theorem~\ref{t:baseclosing}
implies that
\[
\{v_n(\hat{t}_n)\} \subset \Aw.
\]
Thanks again to the asymptotic compactness of $\Dc$, $\Aw$ is strongly
compact due to Theorem~\ref{t:strongA}. Hence, the sequence
$\{v_n(\hat{t}_n)\}$ is also relatively strongly compact.
Then it follows from (\ref{i:Dc0weaktra})
that the limits of the convergent subsequences of
$\{u_n(\hat{t}_n)\}$ and $\{v_n(\hat{t}_n)\}$ coincide, which
contradicts (\ref{i:Dc0strongtratn}).
\end{proof}

Finally, in order to extend Theorem~\ref{t:AComp} to $\Dc$ we need the following:
\begin{lemma}\label{l:eqA02A03A2A3}
If $\Dc$ satisfies A2 and A3, then $\bar{\Dc}$ satisfies \={A2} and \={A3}.
\end{lemma}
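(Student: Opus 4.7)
The plan is to first establish \={A3} for $\bar{\Dc}$ and then to leverage it, together with weak lower semicontinuity of the norm in the Banach space $H$, to obtain \={A2}. In both cases the starting point is the fact that every element $u \in \bar{\Dc}([0,\infty))$ arises as a $C([0,\infty);\Xw)$-limit of a sequence from $\Dc([0,\infty))$; the work lies in transferring the strong a.e.\ control available inside $\Dc$ via A3 to its closure $\bar{\Dc}$.

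For \={A3}, let $u_n, u \in \bar{\Dc}([0,\infty))$ with $u_n \to u$ in $C([0,T];\Xw)$. Since each $u_n$ lies in the closure, there is an approximating sequence $v_{n,j} \in \Dc$ with $v_{n,j} \to u_n$ in $C([0,T];\Xw)$. Such a sequence is Cauchy in $C([0,T];\Xw)$, so A3 yields that $(v_{n,j}(t))_j$ is $\ds$-Cauchy for a.e.\ $t$. Completeness of $H$ upgrades this to $\ds$-convergence, and the strong limit must coincide with the weak limit $u_n(t)$; hence $\ds(v_{n,j}(t), u_n(t)) \to 0$ in measure on the finite interval $[0,T]$ as $j \to \infty$. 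A diagonal selection then yields $v_n = v_{n,j_n} \in \Dc$ with
\[
d_{C([0,T];\Xw)}(v_n, u_n) \le n^{-2}, \qquad \bigl|\{t \in [0,T] : \ds(v_n(t), u_n(t)) > n^{-2}\}\bigr| \le n^{-2}.
\]
Then $v_n \to u$ in $C([0,T];\Xw)$, so A3 applied once more (together with completeness of $H$) gives $v_n(t) \to u(t)$ strongly a.e. The summability $\sum n^{-2} < \infty$ lets Borel--Cantelli force $\ds(v_n(t), u_n(t)) \to 0$ for a.e.\ $t$, and therefore $u_n(t) \to u(t)$ strongly a.e.\ on $[0,T]$.

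For \={A2}, fix $u \in \bar{\Dc}([0,\infty))$, $t > 0$, and $\epsilon > 0$, and take $u_k \in \Dc$ with $u_k \to u$ in $C([0,\infty);\Xw)$. Applying A2 with tolerance $\epsilon/2$ to each $u_k$ yields a uniform $\delta > 0$ and, for each $k$, a full-measure subset $F_k \subset (t-\delta,t)$ on which $|u_k(t)| \le |u_k(t_0)| + \epsilon/2$. The same argument used above, applied directly to the sequence $(u_k) \subset \Dc$, gives strong convergence $u_k(t_0) \to u(t_0)$ on a full-measure set $G \subset (t-\delta,t)$, so $|u_k(t_0)| \to |u(t_0)|$ there. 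Taking $\liminf_k$ of the energy inequality along any $t_0 \in G \cap \bigcap_k F_k$ and using weak lower semicontinuity of the norm on the left-hand side then gives $|u(t)| \le |u(t_0)| + \epsilon$, which is \={A2}.

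The principal obstacle is \={A3}: convergence in $C([0,T];\Xw)$ only controls the weak metric uniformly, while the conclusion demands strong convergence pointwise a.e. The diagonal-plus-Borel--Cantelli step is what bridges this gap, turning the two-parameter approximation $v_{n,j}$ into a single diagonal sequence that is at once $C([0,T];\Xw)$-close to $u$ and $\ds$-close to $u_n$ off a summable family of exceptional sets. Once \={A3} is secured, \={A2} reduces to a routine passage to the liminf.
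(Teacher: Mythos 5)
Your proof is correct and follows essentially the same route as the paper: approximate elements of $\bar{\Dc}$ by trajectories in $\Dc$, use A3 (plus identification of strong and weak limits) to get a.e.\ strong convergence, and pass to the liminf in the energy inequality using weak lower semicontinuity of the norm to obtain \={A2}. The only difference is that the paper dismisses \={A3} for $\bar{\Dc}$ as holding ``by definition,'' whereas you supply the missing detail via the diagonal selection and Borel--Cantelli argument; this is a legitimate filling-in of the same argument rather than a different approach.
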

\begin{proof}
Clearly \={A3} holds by definition of $\bar{\Dc}$.
Now take $u \in \bar{\Dc}([0,\infty))$ and $T>0$. There exists a sequence $u_n \in
\Dc([0,\infty))$ satisfying
\[
u_n\rightarrow u \mbox{ in } C([0,T];\Xw).
\]
Thanks to \={A3},
\[
u_n(t)\rightarrow u(t) \mbox{ strongly in } [0,T]\setminus E_0,
\]
where $E_0$ is a set of zero measure.
Due to A2, for any $\epsilon
>0$, there exists $\delta$, such that for every $u_n \in
\Dc([0,\infty))$ and $T>0$,
\[
|u_n(T)| \leq |u_n(t)| + \epsilon,
\]
for $t$ in $ (T-\delta, T)\setminus E_n$, where $E_n$ is a  zero measure set.
Taking the lower limit as $n\rightarrow \infty$ we obtain
\[
|u(T)| \leq\liminf |u_n(T)| \leq |u(t)| + \epsilon, \quad t\in (T-\delta, T)\setminus \cup_{i=0}^{\infty}E_i,
\]
which means that \={A2} holds.
\end{proof}

With the above results in hand, we conclude with the following versions of
Theorems~\ref{t:weakA}, \ref{t:strongA}, and \ref{t:AComp} for $\Dc$.
\begin{theorem} \label{t:weakA0}
Let $\Dc$ be an evolutionary system. Then \begin{itemize}\item
[1.] The weak global attractor $\Aw$ exists.\end{itemize}
Furthermore, assume that  $\Dc$ satisfies A1. Let $\bar{\Dc}$ be
the closure of $\Dc$. Then
\begin{itemize}
\item [2.]$ \Aw =\ww(X)=\bar{\omega}_{\mathrm w}(X)=\bar{\omega}_{\mathrm s}(X)=\bar{\mathcal A}_{\mathrm w}=\{ u_0 \in X:  u_0=u(0) \mbox{ for some } u \in
\bar{\Dc}((-\infty, \infty))\} $. \item [3.]$\Aw$ is the maximal
invariant and maximal quasi-invariant set w.r.t. $\bar{\Dc}$. \item
[4.](Weak uniform tracking property) For any $\epsilon >0$, there
exists $t_0$, such that for any $t^*>t_0$, every trajectory $u \in
\Dc([0,\infty))$ satisfies $ \dd_{C([t^*,\infty);\Xw)} (u, v) <
\epsilon, $ for some complete trajectory $v \in
\bar{\Dc}((-\infty,\infty))$.
\end{itemize}
\end{theorem}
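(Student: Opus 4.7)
The strategy is to reduce each claim to the corresponding assertion already proved for evolutionary systems satisfying the \emph{compactness} condition \={A1}, applied to the closure $\bar{\Dc}$, and then transfer the information back to $\Dc$ via the identifications in Theorem~\ref{t:baseclosing} and the inclusion $\Dc([0,\infty))\subset\bar{\Dc}([0,\infty))$. The only new ingredient needed is Lemma~\ref{l:eqA01A1}, which guarantees that $\bar{\Dc}$ satisfies \={A1} whenever $\Dc$ satisfies A1. With this in hand the four items are each essentially bookkeeping.

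For item 1 the plan is simply to invoke Theorem~\ref{t:weakA}(1), which requires no assumptions beyond $\Dc$ being an evolutionary system, and hence yields the existence of $\Aw$. For item 2, assuming A1, I would apply Lemma~\ref{l:eqA01A1} to obtain \={A1} for $\bar{\Dc}$ and then apply Theorem~\ref{t:weakA}(2) to $\bar{\Dc}$; this gives
\[
\bar{\mathcal A}_{\mathrm w}=\bar{\omega}_{\mathrm w}(X)=\bar{\omega}_{\mathrm s}(X)=\{u_0:\ u_0=u(0)\text{ for some }u\in\bar{\Dc}((-\infty,\infty))\}.
\]
The remaining identification $\Aw=\bar{\mathcal A}_{\mathrm w}$ and the equality $\ww(X)=\bar{\omega}_{\mathrm w}(X)$ (taking $A=X$, which is trivially weakly open) are precisely the content of Theorem~\ref{t:baseclosing}, so chaining these equalities together closes item 2.

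For item 3, Theorem~\ref{t:weakA}(3) applied to $\bar{\Dc}$ (which satisfies \={A1}) asserts that $\bar{\mathcal A}_{\mathrm w}$ is the maximal invariant and maximal quasi-invariant set for $\bar{\Dc}$. Since item 2 identifies $\Aw$ with $\bar{\mathcal A}_{\mathrm w}$, the same maximality holds for $\Aw$ \emph{with respect to $\bar{\Dc}$}, which is exactly what is asserted. For item 4, applying the weak uniform tracking property from Theorem~\ref{t:weakA}(4) to $\bar{\Dc}$ produces, for each $\epsilon>0$, a time $t_0$ such that every trajectory $u\in\bar{\Dc}([0,\infty))$ is $\dd_{C([t^*,\infty);\Xw)}$-within $\epsilon$ of some complete trajectory $v\in\bar{\Dc}((-\infty,\infty))$ whenever $t^*>t_0$. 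Since $\Dc([0,\infty))\subset\bar{\Dc}([0,\infty))$ by construction of $\bar{\Dc}$, the tracking conclusion restricts immediately to trajectories $u\in\Dc([0,\infty))$, completing item 4.

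The main obstacle, in principle, is that Theorem~\ref{t:weakA} was stated under the stronger hypothesis \={A1} (actual compactness of $\Dc([0,\infty))$ in $C([0,\infty);\Xw)$) rather than the precompactness hypothesis A1 we have here; but this gap is precisely what Lemma~\ref{l:eqA01A1} was designed to bridge, and Theorem~\ref{t:baseclosing} ensures that passing to the closure $\bar{\Dc}$ does not enlarge the weak global attractor. Everything else is mechanical.
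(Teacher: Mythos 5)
Your proposal is correct and follows exactly the route the paper intends: the theorem is stated as an immediate consequence of Lemma~\ref{l:eqA01A1} (A1 for $\Dc$ gives \={A1} for $\bar{\Dc}$), Theorem~\ref{t:weakA} applied to the evolutionary system $\bar{\Dc}$, and Theorem~\ref{t:baseclosing} with $A=X$ to identify $\Aw=\bar{\mathcal A}_{\mathrm w}$ and $\ww(X)=\bar{\omega}_{\mathrm w}(X)$, with item 4 transferred to $\Dc$ via the inclusion $\Dc([0,\infty))\subset\bar{\Dc}([0,\infty))$. Your assembly of these ingredients matches the paper's (implicit) argument, so nothing is missing.
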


\begin{theorem} \label{t:strongA0}
Let $\Dc$ be an asymptotically compact evolutionary system.
Then
\begin{enumerate}
\item[1.]
The strong global attractor $\As$ exists, it is strongly compact, and
$\As=\Aw$.
\end{enumerate}
Furthermore, assume that $\Dc$ satisfies A1. Let $\bar{\Dc}$ be
the closure of $\Dc$. Then
\begin{enumerate}
\item[2.](Strong uniform tracking
property)
For any $\epsilon >0$ and $T>0$, there exists $t_0$, such
that for any $t^*>t_0$, every trajectory $u \in \Dc([0,\infty))$
satisfies $\ds(u(t), v(t)) < \epsilon$, $\forall t\in [t^*,t^*+T]$,
for some complete trajectory $v \in \bar{\Dc}((-\infty,\infty))$.
\end{enumerate}
\end{theorem}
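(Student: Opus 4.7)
The plan is to assemble the theorem from two results already established earlier in the section. Part~1 is an immediate application of the first item of Theorem~\ref{t:strongA}: the weak global attractor $\Aw$ exists for every evolutionary system by Theorem~\ref{t:weakA0}(1), and the asymptotic compactness of $\Dc$ then upgrades it to a strongly compact strong global attractor with $\As=\Aw$. Notice that this half of the statement requires no appeal to A1.

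For Part~2, under the additional hypothesis A1, I would simply invoke Theorem~\ref{t:Dcasycomstru}, which is literally the strong uniform tracking property in question, with the tracking trajectory living in the closure $\bar{\Dc}$ (not in $\Dc$ itself), as required since $\Dc$ need not contain all weak limits of its trajectories.

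If one wanted to redo that tracking argument in place here, the strategy would be by contradiction. Assume there exist $\epsilon,T>0$, trajectories $u_n \in \Dc([0,\infty))$, and times $t_n \to \infty$ such that on each interval $[t_n,t_n+T]$ the graph of $u_n$ stays $\ds$-separated by at least $\epsilon$ from every complete $v \in \bar{\Dc}((-\infty,\infty))$. Since $\bar{\Dc}$ satisfies \={A1} by Lemma~\ref{l:eqA01A1}, Theorem~\ref{t:weakA0}(4) supplies complete trajectories $v_n \in \bar{\Dc}((-\infty,\infty))$ that $\dw$-track $u_n$ uniformly on $[t_n,t_n+T]$. Choosing $\hat t_n \in [t_n,t_n+T]$ where the strong deviation is at least $\epsilon/2$, the asymptotic compactness of $\Dc$ extracts a strongly convergent subsequence of $\{u_n(\hat t_n)\}$, while Theorem~\ref{t:baseclosing} places $\{v_n(\hat t_n)\}$ inside $\Aw$, which is strongly compact by Part~1. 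Weak tracking then forces the two subsequential limits to coincide, contradicting the $\epsilon/2$ separation.

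The main subtlety, were one to prove Part~2 from scratch, is precisely this passage from weak to strong tracking: the complete trajectories produced by Theorem~\ref{t:weakA0}(4) only track in the weak metric, and one needs the strong compactness of $\Aw$ supplied by Part~1 to convert subsequential weak convergence at the putative bad times $\hat t_n$ into strong convergence. Everything else is a formal bookkeeping of the ingredients already in place.
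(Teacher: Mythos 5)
Your proposal is correct and matches the paper's own route: Part~1 is exactly Theorem~\ref{t:strongA}(1) applied to the asymptotically compact system $\Dc$, and Part~2 is precisely Theorem~\ref{t:Dcasycomstru}, whose contradiction argument (weak tracking via \={A1} for $\bar{\Dc}$, the bad times $\hat t_n$, asymptotic compactness of $\Dc$, and strong compactness of $\Aw$ from Theorem~\ref{t:baseclosing} and Theorem~\ref{t:strongA}) you reproduce faithfully. Nothing is missing.
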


\begin{theorem} \label{t:AComp0}
Let $\Dc$ be an evolutionary system satisfying A1, A2, and
A3, and assume that its closure $\bar{\Dc}$ satisfies
$\bar{\Dc}((-\infty,\infty))\subset C((-\infty, \infty);\Xs)$. Then
$\Dc$ is asymptotically compact.
\end{theorem}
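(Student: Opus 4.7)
The plan is to reduce the statement to Theorem \ref{t:AComp} applied to the closure $\bar{\Dc}$ rather than to $\Dc$ itself. Asymptotic compactness concerns sequences $x_k \in R(t_k)X$ with $t_k \to \infty$, and the tautological inclusion $\Dc([0,\infty))\subset\bar{\Dc}([0,\infty))$ gives $R(t)X \subset \bar{R}(t)X$ for every $t \ge 0$. Hence any $x_k \in R(t_k)X$ also lies in $\bar{R}(t_k)X$, so asymptotic compactness of $\bar{\Dc}$ will automatically transfer to $\Dc$.

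To apply Theorem \ref{t:AComp} to $\bar{\Dc}$, I would verify that $\bar{\Dc}$ satisfies \={A1}, \={A2}, \={A3} and that every complete trajectory of $\bar{\Dc}$ is strongly continuous. The first three are already encoded in the lemmas preceding the theorem: Lemma \ref{l:eqA01A1} upgrades A1 to \={A1}, and Lemma \ref{l:eqA02A03A2A3} upgrades A2 and A3 to \={A2} and \={A3}. The strong continuity of complete trajectories of $\bar{\Dc}$ is precisely the additional hypothesis $\bar{\Dc}((-\infty,\infty))\subset C((-\infty,\infty);\Xs)$ of the present theorem.

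With these four inputs in hand, a direct application of Theorem \ref{t:AComp} yields the asymptotic compactness of $\bar{\Dc}$, and then the inclusion noted in the first paragraph finishes the proof. Since the argument is a clean assembly of results already established in this section, there is no substantial technical obstacle. The one point worth emphasizing is that the strong-continuity hypothesis must be read as a statement about complete trajectories of the closure $\bar{\Dc}$, not merely of $\Dc$; this is precisely how the theorem formulates it, so the application of Theorem \ref{t:AComp} to $\bar{\Dc}$ goes through without adjustment.
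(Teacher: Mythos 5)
Your proof is correct and matches the paper's intended argument: the paper states Theorem \ref{t:AComp0} without a separate proof precisely because, as you do, one applies Theorem \ref{t:AComp} to $\bar{\Dc}$ (whose hypotheses \={A1}, \={A2}, \={A3} follow from Lemmas \ref{l:eqA01A1} and \ref{l:eqA02A03A2A3}, and whose complete trajectories are strongly continuous by assumption) and then transfers asymptotic compactness to $\Dc$ via $R(t)X\subset\bar{R}(t)X$. No gaps; this is essentially the same route.
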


\subsection{Uniform w.r.t. symbol space global attractors}\label{s:subES}
In \cite{CV94, CV02}, Chepyzhov and Vishik studied the structure of
the uniform (w.r.t. the initial time) global attractor of a process via
that of the uniform (w.r.t. the symbol space) attractor of a family
of processes with the symbol space being the strong closure of the translation family of the
original symbol in an appropriate functional space. For further
results in the case where the symbol space is a weak closure of the translation family of the original symbol we refer to \cite{LWZ05,
Lu06, Lu07}. In some cases (see e.g. open problems
in \cite{Lu07, CL09}) it is not clear how to choose a symbol space to obtain
the structure of the uniform (w.r.t. the initial time) global attractor.
Even though we solved this problem in Section~\ref{s:closing} using a different
approach,
the uniform (w.r.t. the symbol space) attractor remains  of mathematical
interest. In this subsection we study this object and its relation to the uniform (w.r.t. the initial time) global attractor using our framework of evolutionary system.

\begin{definition}
Let $\Dc$ be an evolutionary system. If a map $\Dc^1$ that
associates to each $I\in \mathcal T$ a subset $\Dc^1(I)\subset
\Dc(I)$ is also an evolutionary system, we will call it an
evolutionary subsystem of $\Dc$, and denote by $\Dc^1\subset \Dc$.
\end{definition}

Let $\Dc_{\bar{\Sigma}}$ be an evolutionary system, and let $\Sigma\subset
\bar{\Sigma}$ be such that $T(h)\Sigma=\Sigma$ for all $h\geq0$. Then
it is easy to check that $\Dc_{\Sigma}$ is also an evolutionary
system. Hence, it is  an evolutionary subsystem of $\Dc_{\bar{\Sigma}}$. For
example, in Section \ref{S:nonauto}, the evolutionary system defined
by a process $\{U_{\sigma_0}(t,\tau)\}$ is an evolutionary subsystem
of the evolutionary system defined by the family of processes
$\{U_\sigma(t,\tau)\}$, $\sigma\in \bar{\Sigma}$, where $\bar{\Sigma}$ is the
closure of the translation family $\Sigma$ of the symbol
$\sigma_0$ in some appropriate topological space.

\begin{definition} An evolutionary system $ \mathcal E_\Sigma$ is a system
with uniqueness if
for every $u_0\in X$ and $\sigma\in \Sigma$, there is a unique
trajectory $u\in \mathcal E_{\sigma}([0,\infty))$ such that
$u(0)=u_0$.
\end{definition}

Examples of  evolutionary systems with uniqueness include the evolutionary systems defined previously by  a process and a
family of processes.

\begin{theorem}\label{t:A0Aw}
Let $\Dc_\Sigma$ be an evolutionary system with uniqueness and  with symbol space $\Sigma$ satisfying A1. Let $\bar{\Sigma}$ be the closure of $\Sigma$ in some topological space
$\Im$ and $\Dc_{\bar{\Sigma}}\supset\Dc_\Sigma$ be an evolutionary system with uniqueness
satisfying \={A1}, and such that   $u_n\in \Dc_{\sigma_n}([0,\infty))$,
$u_n\rightarrow u$ in $C([0, \infty);\Xw)$ and $\sigma_n\rightarrow
\sigma$ in $\Im$ imply $u\in \Dc_{\sigma}([0,\infty))$. Then, their
weak uniform global attractors $\Aw^\Sigma$ and $\mathcal A^{\bar{\Sigma}}_{\mathrm{w}}$ are identical.
\end{theorem}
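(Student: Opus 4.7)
The plan is to identify each of $\Aw^{\Sigma}$ and $\mathcal{A}^{\bar{\Sigma}}_{\mathrm{w}}$ with the initial data of complete trajectories and then reduce the theorem to equality of those families. Since $\Dc_\Sigma$ satisfies A1, Theorem~\ref{t:baseclosing} gives
\[
\Aw^{\Sigma} = \{u(0) : u \in \overline{\Dc_\Sigma}((-\infty,\infty))\},
\]
and since $\Dc_{\bar{\Sigma}}$ satisfies \={A1}, Theorem~\ref{t:weakA}(2) gives
\[
\mathcal{A}^{\bar{\Sigma}}_{\mathrm{w}} = \{u(0) : u \in \Dc_{\bar{\Sigma}}((-\infty,\infty))\}.
\]
So it is enough to prove $\overline{\Dc_\Sigma}((-\infty,\infty)) = \Dc_{\bar{\Sigma}}((-\infty,\infty))$.

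For the easy inclusion $\overline{\Dc_\Sigma}((-\infty,\infty)) \subseteq \Dc_{\bar{\Sigma}}((-\infty,\infty))$, I would use that, by \={A1}, $\Dc_{\bar{\Sigma}}([0,\infty))$ is $C([0,\infty);\Xw)$-compact; the translation identity in condition~2 of Definition~\ref{d:Dc0} transports this compactness to $\Dc_{\bar{\Sigma}}([\tau,\infty))$ for every $\tau$. Since $\Dc_\Sigma([\tau,\infty)) \subseteq \Dc_{\bar{\Sigma}}([\tau,\infty))$, taking $C([\tau,\infty);\Xw)$-closures yields $\overline{\Dc_\Sigma}([\tau,\infty)) \subseteq \Dc_{\bar{\Sigma}}([\tau,\infty))$ for all $\tau$, and condition~4 of Definition~\ref{d:Dc0} passes the inclusion to complete trajectories.

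For the reverse inclusion, I would fix $u \in \Dc_\sigma((-\infty,\infty))$ with $\sigma \in \bar{\Sigma}$ and show $u|_{[\tau,\infty)} \in \overline{\Dc_\Sigma([\tau,\infty))}^{C([\tau,\infty);\Xw)}$ for every $\tau \in \mathbb{R}$; condition~4 of Definition~\ref{d:Dc0} then assembles these into $u \in \overline{\Dc_\Sigma}((-\infty,\infty))$. The case $\tau = 0$ is the heart of the argument: by density pick $\sigma_n \in \Sigma$ with $\sigma_n \to \sigma$ in $\Im$; by uniqueness in $\Dc_\Sigma$ let $u_n \in \Dc_{\sigma_n}([0,\infty))$ be the unique trajectory with $u_n(0) = u(0)$; by A1 the sequence $\{u_n\}$ is precompact in $C([0,\infty);\Xw)$; for any subsequential limit $v$, the closure hypothesis on $\Dc_{\bar{\Sigma}}$ gives $v \in \Dc_\sigma([0,\infty))$, and uniqueness in $\Dc_{\bar{\Sigma}}$ combined with $v(0)=u(0)$ forces $v = u|_{[0,\infty)}$. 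Hence the whole sequence converges and $u|_{[0,\infty)} \in \overline{\Dc_\Sigma}([0,\infty))$. For $\tau \neq 0$, I would reduce to $\tau = 0$ using the mechanism of Lemma~\ref{reducing initial value lemma } applied in $\bar{\Sigma}$ (legitimate since $T(s)\bar{\Sigma} = \bar{\Sigma}$): rewrite $u|_{[\tau,\infty)}$ as a time-shift of a trajectory in $\Dc_{\sigma''}([0,\infty))$ for some $\sigma'' \in \bar{\Sigma}$, approximate that trajectory by elements of $\Dc_\Sigma([0,\infty))$ via the $\tau=0$ case, and translate back using $T(|\tau|)\Sigma = \Sigma$ to produce approximators inside $\Dc_\Sigma([\tau,\infty))$.

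The main obstacle is the identification of the subsequential limit in the hard direction: it rests on a tight interplay of three ingredients, namely A1 (to extract a convergent subsequence of the approximating trajectories), the closure hypothesis on $\Dc_{\bar{\Sigma}}$ (to certify that every such limit is itself a trajectory associated with the limit symbol $\sigma$), and uniqueness in $\Dc_{\bar{\Sigma}}$ (to rule out any candidate limit other than $u$). The translation bookkeeping for $\tau < 0$ is technical but mechanical once $T(s)\bar{\Sigma} = \bar{\Sigma}$ and $T(s)\Sigma = \Sigma$ are exploited consistently.
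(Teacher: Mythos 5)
Your proposal is correct, but it follows a genuinely different route from the paper. The paper argues by contradiction at the level of attractors: assuming a point $x_0\in \mathcal A^{\bar{\Sigma}}_{\mathrm{w}}\setminus\Aw^\Sigma$, it takes a complete trajectory $v\in\Dc_\sigma((-\infty,\infty))$ through $x_0$, uses the $\dw$-attracting property of $\Aw^\Sigma$ to find $t_0$ with $R_\Sigma(t_0)\{v(t)\}\subset B_{\mathrm w}(\Aw^\Sigma,\epsilon)$, and then runs exactly your key mechanism (approximate $\sigma$ by $\sigma_n\in\Sigma$, start trajectories $u_n\in\Dc_{\sigma_n}([0,\infty))$ at $v(-t_0)$, extract a $C([0,\infty);\Xw)$-limit via \={A1}, identify it with $v(\cdot-t_0)$ by the closure hypothesis plus uniqueness) to force $x_0\in\overline{B_{\mathrm w}(\Aw^\Sigma,\epsilon)}^{\mathrm w}$, contradicting the disjointness of the two balls. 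You instead prove the stronger set identity $\bar{\Dc}_\Sigma((-\infty,\infty))=\Dc_{\bar{\Sigma}}((-\infty,\infty))$ (indeed your $\tau$-by-$\tau$ argument gives $\bar{\Dc}_\Sigma([\tau,\infty))=\Dc_{\bar{\Sigma}}([\tau,\infty))$ for every $\tau$) and then invoke Theorems \ref{t:baseclosing} and \ref{t:weakA} to equate the attractors; this buys more than the theorem asks for — it immediately yields the representation (\ref{i:A0unistru}) of Theorem \ref{t:A0AwA-weak} and, via Theorem \ref{t:weaktrattracor}, equality of the associated trajectory attractors in the uniqueness setting — at the price of the translation bookkeeping for $\tau<0$ and of relying on the structure theorems for both $\bar{\Dc}_\Sigma$ and $\Dc_{\bar{\Sigma}}$, whereas the paper's argument only needs a finite time window and the minimality/attraction properties. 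Two implicit points you share with the paper and should keep in mind: extracting a sequence $\sigma_n\in\Sigma$ converging to $\sigma\in\bar{\Sigma}$ presupposes that closure points in $\Im$ are sequential limits (the paper's proof makes the same assumption), and the "with uniqueness" definition is read as asserting existence as well as uniqueness of the trajectory with prescribed initial value, which is what licenses choosing $u_n(0)=u(0)$; also note that your shift argument only needs $T(|\tau|)\Sigma\subset\Sigma$ and $T(s)\bar{\Sigma}=\bar{\Sigma}$, both available in the standing hypotheses.
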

\begin{proof}
Obviously, $\Aw^\Sigma$ and $\mathcal A^{\bar{\Sigma}}_{\mathrm{w}}$ exist and $\Aw^\Sigma\subset \mathcal A^{\bar{\Sigma}}_{\mathrm{w}}$. If there
exists $x_0\in \mathcal A^{\bar{\Sigma}}_{\mathrm{w}}\setminus\Aw^\Sigma$, then  there exist two disjoin
balls $B_{\mathrm w}(\Aw^\Sigma, \epsilon)$ and $B_{\mathrm w}(x_0,
\epsilon)$. Since $$\{u(t)|t\in \mathbb R,
u\in\Dc_\Sigma((-\infty,\infty))\}\subset\Aw^\Sigma,$$ we can take, by Theorem
\ref{t:weakA},  a complete trajectory $v(t)\in
\Dc_\sigma((-\infty,\infty))$ with $\sigma\in
\bar{\Sigma}\setminus\Sigma$ such that $v(0)=x_0$. The set $\{v(t)|t\in
\mathbb R\}$ is $\dw$-attracted by $\Aw^\Sigma$. Hence, there is some
$t_0$ such that,
\begin{equation*}
R_{\Sigma}(t_0)\{v(t)|t\in\mathbb R\}\subset B_{\mathrm w}(\Aw^\Sigma,
\epsilon).
\end{equation*}
Note that $\bar{\Sigma}$ is the closure of $\Sigma$ in $\Im$. Take a
sequence $\sigma_n\in \Sigma$ such that
$\sigma_n\rightarrow\sigma$ in $\Im$. Consider a sequence of trajectories
$u_n(t)\in \Dc_{\sigma_n}([0,\infty))$ satisfying $u_n(0)=v(-t_0)$.
We have
\begin{equation}\label{i:unaw0}
u_n(t_0)\in B_{\mathrm w}(\Aw^\Sigma, \epsilon).
\end{equation}
Thanks to \={A1}, $\{u_n(t)\} $ converges, passing to a subsequence and
dropping a subindex, in $C([0, \infty);\Xw)$, whose limit $u(t)\in
\Dc_\sigma([0,\infty))$ due to $\sigma_n\rightarrow\sigma$ in $\Im$.
By the fact that $u(0)=v(-t_0)$ and the uniqueness of the
evolutionary system $\Dc_{\bar\Sigma}$, $u(t)=v(t-t_0)$, $t\geq 0$.
However, (\ref{i:unaw0}) indicates that
$$x_0=v(0)=u(t_0)\in \overline{B_{\mathrm w}(\Aw^\Sigma, \epsilon)}^{\mathrm
w}.$$This is a contradiction. Hence, $\Aw^\Sigma=\mathcal A^{\bar{\Sigma}}_{\mathrm{w}}$.
\end{proof}

Therefore, together with Theorems \ref{t:weakA0} and \ref{t:weakA},
Theorem \ref{t:A0Aw} implies the following:
\begin{theorem}\label{t:A0AwA-weak}
Under the conditions of Theorem \ref{t:A0Aw}, let $\bar{\Dc}_\Sigma$ be the
closure of the evolutionary system $\Dc_\Sigma$. Then the three weak uniform global
attractors $\Aw^\Sigma$, $\bar{\mathcal A}^\Sigma_{\mathrm w}$ and $\mathcal A^{\bar{\Sigma}}_{\mathrm{w}}$ of the
evolutionary systems $\Dc_\Sigma$, $\bar{\Dc}_\Sigma$ and $\Dc_{\bar{\Sigma}}$,
respectively, are identical, and the following invariance property
holds
\begin{equation}\label{i:A0unistru}
\begin{split}
\Aw^\Sigma&=\bar{\mathcal A}^\Sigma_{\mathrm
w}=\mathcal A^{\bar{\Sigma}}_{\mathrm{w}}\\
&=\{ u_0: \ u_0=u(0) \mbox{ for some } u \in \bar{\Dc}_\Sigma((-\infty,
\infty))\}\\
&=\{ u_0: \ u_0=u(0) \mbox{ for some } u \in \Dc_{\bar{\Sigma}}((-\infty,
\infty))\}.
\end{split}
\end{equation}
Moreover, the weak uniform tracking property holds.
\end{theorem}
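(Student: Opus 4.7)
The plan is to assemble the three stated equalities and the tracking property by stacking three previously established results; the work in this proof is essentially bookkeeping rather than new analysis, so I expect no real obstacle.

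First, I would invoke Theorem~\ref{t:A0Aw} directly. Its hypotheses match ours verbatim, and it yields the identification $\Aw^\Sigma = \mathcal A^{\bar{\Sigma}}_{\mathrm{w}}$. This handles one of the three pairwise equalities and is the only step that uses the uniqueness assumption on $\Dc_{\bar{\Sigma}}$ and the topological continuity hypothesis $(\sigma_n\to\sigma, u_n\to u)\Rightarrow u\in\Dc_\sigma([0,\infty))$.

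Next, because $\Dc_\Sigma$ satisfies A1, I would apply Theorem~\ref{t:weakA0} to $\Dc_\Sigma$ with closure $\bar{\Dc}_\Sigma$. Part 2 of that theorem provides both the identity $\Aw^\Sigma = \bar{\mathcal A}_{\mathrm w}^\Sigma$ and the structural formula
\[
\Aw^\Sigma = \{u_0 : u_0 = u(0) \text{ for some } u \in \bar{\Dc}_\Sigma((-\infty,\infty))\},
\]
which is the second line of \eqref{i:A0unistru}. Part 4 of Theorem~\ref{t:weakA0} simultaneously supplies the weak uniform tracking property with tracking trajectories drawn from $\bar{\Dc}_\Sigma((-\infty,\infty))$.

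Finally, because $\Dc_{\bar{\Sigma}}$ satisfies \={A1}, I would apply Theorem~\ref{t:weakA} (part 2) to $\Dc_{\bar{\Sigma}}$, giving
\[
\mathcal A^{\bar{\Sigma}}_{\mathrm{w}} = \{u_0 : u_0 = u(0) \text{ for some } u \in \Dc_{\bar{\Sigma}}((-\infty,\infty))\},
\]
which is the third line of \eqref{i:A0unistru}. Chaining the equalities from the three steps closes the loop. The only potentially delicate point to verify as I write it out carefully is that the hypothesis A1 on $\Dc_\Sigma$ is indeed the correct input for Theorem~\ref{t:weakA0} (as opposed to \={A1}), but this is built into the statement of that theorem, so nothing new needs to be proved.
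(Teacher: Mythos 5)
Your proposal is correct and follows exactly the paper's route: the paper likewise deduces this theorem by combining Theorem~\ref{t:A0Aw} (giving $\Aw^\Sigma=\mathcal A^{\bar{\Sigma}}_{\mathrm{w}}$) with Theorem~\ref{t:weakA0} applied to $\Dc_\Sigma$ under A1 (giving $\Aw^\Sigma=\bar{\mathcal A}^\Sigma_{\mathrm w}$, the structural formula via $\bar{\Dc}_\Sigma$, and the weak uniform tracking property) and Theorem~\ref{t:weakA} applied to $\Dc_{\bar{\Sigma}}$ under \={A1} (giving the structural formula via $\Dc_{\bar{\Sigma}}$). Nothing is missing.
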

Now, Theorem \ref{t:strongA} ensures the strong compactness.

\begin{theorem}\label{t:A0AwA-strong1}Under the conditions of Theorem \ref{t:A0AwA-weak}, assume that
 $\Dc_{\bar{\Sigma}}$ is asymptotically compact. Then the
weak uniform global attractors in Theorem \ref{t:A0AwA-weak} are
strongly compact strong uniform global attractors. Moreover, the
strong uniform tracking property holds.
\end{theorem}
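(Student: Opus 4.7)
The plan is to leverage the hypothesized asymptotic compactness of $\Dc_{\bar{\Sigma}}$ to feed the autonomous strong-attractor machinery (Theorems \ref{t:strongA} and \ref{t:strongA0}) to each of the three evolutionary systems appearing in Theorem \ref{t:A0AwA-weak}, and then glue the conclusions together via the identifications already proved in that theorem.

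First, applied directly to $\Dc_{\bar{\Sigma}}$, which satisfies \={A1} and is asymptotically compact by hypothesis, Theorem \ref{t:strongA} yields that $\mathcal A^{\bar{\Sigma}}_{\mathrm s}$ exists, is strongly compact, equals $\mathcal A^{\bar{\Sigma}}_{\mathrm w}$, and that the strong uniform tracking property holds inside $\Dc_{\bar{\Sigma}}$. Combining with the identification $\Aw^{\Sigma}=\bar{\mathcal A}^{\Sigma}_{\mathrm w}=\mathcal A^{\bar{\Sigma}}_{\mathrm w}$ furnished by Theorem \ref{t:A0AwA-weak}, the common weak attractor is strongly compact, giving one third of the statement immediately.

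Second, I would transfer asymptotic compactness to the two subsystems. For $\Dc_\Sigma$, any sequence $x_k\in R^{\Sigma}(t_k)X$ also lies in $R^{\bar{\Sigma}}(t_k)X$ since $\Dc_\Sigma\subset\Dc_{\bar{\Sigma}}$, and is therefore relatively strongly compact. For $\bar{\Dc}_\Sigma$, I first note that \={A1} forces $\Dc_{\bar{\Sigma}}([\tau,\infty))$ to be closed in $C([\tau,\infty);\Xw)$, so from $\Dc_\Sigma\subset\Dc_{\bar{\Sigma}}$ one concludes $\bar{\Dc}_\Sigma\subset\Dc_{\bar{\Sigma}}$, and the same comparison yields asymptotic compactness of $\bar{\Dc}_\Sigma$. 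With A1 in hand for $\Dc_\Sigma$, Theorem \ref{t:strongA0} gives that $\As^{\Sigma}=\Aw^{\Sigma}$, is strongly compact, and produces the strong uniform tracking property for trajectories in $\Dc_\Sigma$ by complete trajectories in $\bar{\Dc}_\Sigma$; analogously, since $\bar{\Dc}_\Sigma$ satisfies \={A1} by Lemma \ref{l:eqA01A1}, Theorem \ref{t:strongA} applied to $\bar{\Dc}_\Sigma$ gives $\bar{\mathcal A}^{\Sigma}_{\mathrm s}=\bar{\mathcal A}^{\Sigma}_{\mathrm w}$ together with the corresponding strong uniform tracking inside $\bar{\Dc}_\Sigma$.

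The only non-mechanical step is the inclusion $\bar{\Dc}_\Sigma\subset\Dc_{\bar{\Sigma}}$, which is not formal from $\Dc_\Sigma\subset\Dc_{\bar{\Sigma}}$ but is immediate once the closedness of $\Dc_{\bar{\Sigma}}([\tau,\infty))$ in $C([\tau,\infty);\Xw)$ that \={A1} buys us is used; apart from this, the proof is a routine assembly of results already established, and no new analysis on the equation itself is required.
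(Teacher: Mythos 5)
Your proposal is correct and follows essentially the same route as the paper, which dispatches this theorem with the single remark that Theorem \ref{t:strongA} (together with Theorem \ref{t:strongA0}) ensures the strong compactness once the three attractors have been identified in Theorem \ref{t:A0AwA-weak}. Your extra verifications---transferring asymptotic compactness from $\Dc_{\bar{\Sigma}}$ to $\Dc_\Sigma$, and to $\bar{\Dc}_\Sigma$ via the inclusion $\bar{\Dc}_\Sigma\subset\Dc_{\bar{\Sigma}}$ coming from the closedness that \={A1} provides---are sound and simply make explicit what the paper leaves implicit.
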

In applications the auxiliary  evolutionary system
$\Dc_{\bar{\Sigma}}$ is usually asymptotically compact. For instance, in \cite{Lu06}, in the case of the 2D Navier-Stokes
equations with non-slip boundary condition, $\bar{\Sigma}$ is taken as the
closure of the translation family  of a normal external force (see Section \ref{3DNSE}) in
$L^{2,\mathrm{w}}_{\mathrm {loc}}(\mathbb R;V')$. Here, $V'$ is the
dual of the space of divergence-free vector fields with
square-integrable derivatives and vanishing on the boundary, and
$L^{2,\mathrm{w}}_{\mathrm {loc}}(\mathbb R;V')$ is the space
$L^{2}_{\mathrm {loc}}(\mathbb R;V')$ endowed with local weak
convergence topology. Then Theorem~\ref{t:A0AwA-strong1} applied to this system
gives the strong uniform tracking property.

Finally, together with Theorem \ref{t:AComp}, Theorem \ref{t:A0AwA-strong1} implies the following:
\begin{theorem}\label{t:A0AwA-strong}Under the conditions of Theorem \ref{t:A0AwA-weak}, assume that
$\Dc_{\bar{\Sigma}}$ satisfies \={A2}, \={A3} and every complete
trajectory in (\ref{i:A0unistru}) is strongly continuous. Then the
weak uniform global attractors in Theorem \ref{t:A0AwA-weak} are
strongly compact strong uniform global attractors. Moreover, the strong
uniform tracking property holds.
\end{theorem}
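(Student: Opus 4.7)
The plan is to combine Theorem~\ref{t:AComp} with Theorem~\ref{t:A0AwA-strong1}: first I would use the former to upgrade the hypotheses of the present theorem into asymptotic compactness of $\Dc_{\bar{\Sigma}}$, and then invoke the latter as a black box to read off the conclusions.

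More precisely, I would verify in turn each hypothesis of Theorem~\ref{t:AComp} for the evolutionary system $\Dc_{\bar{\Sigma}}$. Property \={A1} for $\Dc_{\bar{\Sigma}}$ is part of the standing assumptions imported through Theorem~\ref{t:A0AwA-weak} (it is explicitly required in Theorem~\ref{t:A0Aw}). Properties \={A2} and \={A3} for $\Dc_{\bar{\Sigma}}$ are given directly in the present statement. Finally, the assumption that every complete trajectory appearing in (\ref{i:A0unistru}) is strongly continuous applies in particular to the family $\Dc_{\bar{\Sigma}}((-\infty,\infty))$ referenced in the last line of that identity, which is exactly the set of all complete trajectories of $\Dc_{\bar{\Sigma}}$. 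Hence Theorem~\ref{t:AComp} applies and produces the asymptotic compactness of $\Dc_{\bar{\Sigma}}$.

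With asymptotic compactness of $\Dc_{\bar{\Sigma}}$ established, all hypotheses of Theorem~\ref{t:A0AwA-strong1} are in place, and that theorem immediately yields both of the desired assertions: the three coinciding weak uniform global attractors $\Aw^\Sigma=\bar{\mathcal A}^\Sigma_{\mathrm w}=\mathcal A^{\bar{\Sigma}}_{\mathrm{w}}$ are strongly compact strong uniform global attractors, and the strong uniform tracking property holds. No substantial obstacle should arise: the argument is a two-step reduction to existing results in the paper. The only point requiring a line of care is the identification of \emph{complete trajectories in} (\ref{i:A0unistru}) with the set $\Dc_{\bar{\Sigma}}((-\infty,\infty))$ (so that the strong-continuity assumption feeds correctly into Theorem~\ref{t:AComp}), but this identification is immediate from the final equality in (\ref{i:A0unistru}).
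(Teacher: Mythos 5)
Your proposal is correct and is exactly the paper's argument: the paper proves this theorem by the same two-step reduction, citing Theorem~\ref{t:AComp} (applied to $\Dc_{\bar{\Sigma}}$, whose \={A1} hypothesis is inherited from the conditions of Theorem~\ref{t:A0Aw} and whose complete trajectories are precisely $\Dc_{\bar{\Sigma}}((-\infty,\infty))$, covered by the strong-continuity assumption in (\ref{i:A0unistru})) to get asymptotic compactness, and then invoking Theorem~\ref{t:A0AwA-strong1}.
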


\section{Trajectory attractor}\label{s:traattra}
A trajectory attractor for the 3D NSE was introduced in \cite{Se96}
and further studied in \cite{CV97, CV02, SY02} by considering a family of auxiliary nonautonomous systems
including the original system. In this section, with the results in
preceding sections in hand, we will naturally construct a
trajectory attractor for the original system under consideration, rather than for a family of systems. More precisely, we construct a trajectory attractor
for the evolutionary system $\Dc$ satisfying A1 utilizing the
trajectory attractor for its closure $\bar{\Dc}$,
which is defined in \cite{C09}.

Let $\mathcal F^+:=C([0,\infty);\Xw)$ and denote
$$\mathcal K^+:=\Dc([0\,\infty))\subset \mathcal F^+.$$
 Define the translation operators $T(s)$, $s\geq0$,
$$(T(s)u)(t):=u(t+s)|_{[0,\infty)},\quad u\in \mathcal F^+.$$
Due to the property 3 of the evolutionary system (see Definitions
\ref{Dc} and \ref{d:Dc0}), we have that,
$$T(s)\mathcal K^+\subset \mathcal K^+,\quad \forall\, s\geq 0.$$
Note that $\mathcal K^+$ may not be closed, but is precompact in
$\mathcal F^+$ due to A1. For a set $P\subset\mathcal F^+$ and
$r>0$ denote
$$B(P,r):=\{u\in \mathcal F^+:\dd_{C([0,\infty);\Xw)}(u,P)<r\}.
$$
A set $P\subset\mathcal F^+$ uniformly attracts a set
$Q\subset\mathcal K^+$ if for any $\epsilon>0$ there exists $t_0$,
such that
$$T(t)Q\subset B(P,\epsilon), \quad\forall\, t\geq t_0.$$
\begin{definition}
A set $P\subset\mathcal F^+$ is a trajectory attracting set for an evolutionary system $\Dc$ if it
uniformly attracts $\mathcal K^+$.
\end{definition}
\begin{definition}
A set $\mathfrak A\subset\mathcal F^+$ is a trajectory attractor for an evolutionary system $\Dc$ if $\mathfrak
A$ is a minimal compact trajectory attracting set, and
$T(t)\mathfrak A=\mathfrak A$ for all $t\geq 0$.
\end{definition}
It is easy to see that if a trajectory attractor exists, it is unique. Let
$\bar{\Dc}$ be the closure of the evolutionary system $\Dc$ and let
$\bar{\mathcal K}:=\bar{\Dc}((-\infty, \infty))$ which is called the
kernel of $\bar{\Dc}$. Let also
$$\Pi_+\bar{\mathcal K}:=\{u(\cdot)|_{[0,\infty)}:u\in \bar{\mathcal K}\}. $$

\begin{theorem}\label{t:weaktrattracor}
Let $\Dc$ be an evolutionary system satisfying A1. Then the
trajectory attractor exists and
\[\mathfrak A=\Pi_+\bar{\mathcal K}, \]
where $\bar{\mathcal K}$ is the kernel of the closure $\bar{\Dc}$ of the  evolutionary system $\Dc$. Furthermore,
\[\Aw=\mathfrak A(t):=\{u(t): u\in \mathfrak A\},\quad \forall\, t\geq 0.\]
\end{theorem}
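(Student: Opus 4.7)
The plan is to realize $\mathfrak{A} := \Pi_+\bar{\mathcal{K}}$ as the global attractor of the continuous semigroup $\{T(t)\}_{t\ge 0}$ acting on the compact metric space $\bar{\Dc}([0,\infty))$. By Lemma~\ref{l:eqA01A1}, $\bar{\Dc}$ satisfies \={A1}, so $\bar{\Dc}([0,\infty))$ is compact in $\mathcal{F}^+$; by definition it equals the closure of $\mathcal{K}^+$. The semigroup $T(t)$ acts continuously on $\mathcal{F}^+$, and properties 2 and 3 in Definition~\ref{Dc}, applied to $\bar{\Dc}$, give $T(t)\bar{\Dc}([0,\infty))\subset \bar{\Dc}([0,\infty))$. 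Since $\mathcal{K}^+$ is dense in $\bar{\Dc}([0,\infty))$, the minimal compact sets attracting these two families must coincide.

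The core step is to identify $\mathfrak{A}$ with the $\omega$-limit set $\omega_{\mathcal{F}^+}(\bar{\Dc}([0,\infty))):=\bigcap_{t\ge 0}\overline{\bigcup_{s\ge t}T(s)\bar{\Dc}([0,\infty))}^{\mathcal{F}^+}$. For one inclusion, given $v=u|_{[0,\infty)}$ with $u\in\bar{\mathcal{K}}$, the shifted functions $w_n(s):=u(s-t_n)\in\bar{\Dc}([0,\infty))$ (by property~2) satisfy $T(t_n)w_n=v$ for any $t_n\to\infty$, placing $v$ in the $\omega$-limit. The reverse inclusion is the main technical step: if $T(t_n)w_n\to v$ in $\mathcal{F}^+$ with $w_n\in\bar{\Dc}([0,\infty))$ and $t_n\to\infty$, set $\tilde{w}_n(s):=w_n(s+t_n)$, defined on $[-t_n,\infty)$ and lying in $\bar{\Dc}([-t_n,\infty))$ by property~2. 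A diagonal extraction using the \={A1}-compactness of each $\bar{\Dc}([-N,\infty))$ produces a limit $\tilde{w}\in C((-\infty,\infty);\Xw)$ with $\tilde{w}|_{[-N,\infty)}\in\bar{\Dc}([-N,\infty))$ for every $N$. Property~4 then gives $\tilde{w}\in\bar{\Dc}((-\infty,\infty))=\bar{\mathcal{K}}$, and the construction forces $\tilde{w}|_{[0,\infty)}=v$, so $v\in\mathfrak{A}$.

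With this characterization in hand, the remaining attractor properties follow quickly. The uniform attracting property of $\mathfrak{A}$ for $\mathcal{K}^+$ is a direct transcription of the weak uniform tracking property, Theorem~\ref{t:weakA0}(4): if $u\in\Dc([0,\infty))$ is $\epsilon$-close to $v\in\bar{\Dc}((-\infty,\infty))$ on $[t^*,\infty)$ in the $C([t^*,\infty);\Xw)$ metric, then $T(t^*)u$ is $\epsilon$-close in $\mathcal{F}^+$ to $v(\cdot+t^*)|_{[0,\infty)}\in\mathfrak{A}$. The strict invariance $T(t)\mathfrak{A}=\mathfrak{A}$ follows from the time-shift invariance of $\bar{\mathcal{K}}$ in both directions. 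Compactness of $\mathfrak{A}$ follows from its closedness inside $\bar{\Dc}([0,\infty))$, proved by the same diagonal argument as above applied to a convergent sequence in $\mathfrak{A}$. Minimality is standard: any compact attracting set $P$ for $\mathcal{K}^+$ contains every orbit limit, hence $\omega_{\mathcal{F}^+}(\mathcal{K}^+)=\mathfrak{A}\subset P$.

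For the final assertion, $\mathfrak{A}(t)=\{u(t):u\in\bar{\mathcal{K}}\}$, and the time-shift invariance of $\bar{\mathcal{K}}$ (in both directions) shows this equals $\{w(0):w\in\bar{\mathcal{K}}\}$, which by Theorem~\ref{t:weakA0}(2) is exactly $\Aw$. The principal obstacle is the diagonal construction of the complete trajectory $\tilde{w}\in\bar{\mathcal{K}}$ in the reverse inclusion: this is the point where the nested-interval property~4, the time-shift property~2, and the \={A1}-compactness inherited by $\bar{\Dc}$ through Lemma~\ref{l:eqA01A1} must be combined.
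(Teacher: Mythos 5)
Your proof is correct, but it takes a genuinely different route from the paper's. The paper does not redo the trajectory-attractor theory for $\bar{\Dc}$: it invokes Theorem 7.4 of \cite{C09} (legitimate since $\bar{\Dc}$ satisfies \={A1} by Lemma~\ref{l:eqA01A1}), so the only thing it actually proves is that $\Pi_+\bar{\mathcal K}$ is minimal among compact sets attracting $\mathcal K^+$ itself, not merely $\bar{\mathcal K}^+$. It does this by contradiction with a quantitative estimate: the backward shifts $v(\cdot-n)|_{[0,\infty)}$ of a complete trajectory of $\bar{\Dc}$ are approximated by $u_n\in\Dc([0,\infty))$ to within $\epsilon/2^n$, and one uses that $T(n)$ is Lipschitz with constant $2^n$ in the weighted metric on $C([0,\infty);\Xw)$ to conclude $T(n)u_n$ stays away from the putative smaller attracting set. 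You instead rebuild the \cite{C09} content from scratch (identification of $\Pi_+\bar{\mathcal K}$ with the $\omega$-limit of $\bar{\Dc}([0,\infty))$ under the shift semigroup, via diagonal extraction and property 4 of Definition~\ref{Dc}), and you handle minimality softly: each $T(t)$ is continuous on $\mathcal F^+$, so a compact set attracting $\mathcal K^+$ also attracts its closure $\overline{\mathcal K^+}=\bar{\Dc}([0,\infty))$ and hence contains the $\omega$-limit $\mathfrak A$. That continuity-plus-density argument is valid and arguably cleaner than the $2^n$ bookkeeping; conversely, the paper's proof is much shorter because it outsources the $\omega$-limit machinery. One small imprecision to fix: in your final minimality sentence you write $\omega_{\mathcal F^+}(\mathcal K^+)=\mathfrak A$, whereas your core step identifies $\mathfrak A$ with $\omega_{\mathcal F^+}\bigl(\bar{\Dc}([0,\infty))\bigr)$; the equality of these two needs exactly the density/continuity bridge stated in your first paragraph, so the clean chain is: $P$ attracts $\mathcal K^+$ $\Rightarrow$ $P$ attracts $\overline{\mathcal K^+}$ $\Rightarrow$ $P\supset\omega_{\mathcal F^+}\bigl(\overline{\mathcal K^+}\bigr)=\mathfrak A$. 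The remaining ingredients you use (attraction of $\mathcal K^+$ via the weak uniform tracking property of Theorem~\ref{t:weakA0}, two-sided shift invariance of $\bar{\mathcal K}$, and the cross-section identity $\mathfrak A(t)=\Aw$ from Theorem~\ref{t:weakA0}) are all sound and consistent with what the paper imports from \cite{C09}.
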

\begin{proof}
Notice that Theorem 7.4 in \cite{C09} states that the conclusions are valid for an evolutionary system satisfying \={A1}.

Obviously, $\bar{\Dc}$ satisfies \={A1}.
Hence the trajectory attractor $\Pi_+\bar{\mathcal K}$ for $\bar{\Dc}$ uniformly attracts $\bar{\mathcal K}^+$. Now we verify that $\Pi_+\bar{\mathcal K}$ is a minimal trajectory attracting set for $\Dc$. Assume that there exists a compact trajectory attracting set $P$ strictly included in $\Pi_+\bar{\mathcal K}$. Then there exist $\epsilon > 0 $ and
\[
u \in \Pi_+\bar{\mathcal K}\setminus B(P, 2\epsilon).
\]
Let $v\in \bar{\Dc}((-\infty,\infty)) $ be such that $v|_{[0,\infty)} = u$. Let also $v_n(\cdot) = v(\cdot-n)|_{[0,\infty)}$. Note that
$v_n\in \bar{\Dc}([0,\infty))$ and
\[
T (n)v_n = u \notin B(P, 2\epsilon), \quad \forall \ n.
\]
Now take $u_n\in \Dc([0,\infty))$ such that
\[
\dd_{C([0,\infty);\Xw)}(u_n,v_n)<\epsilon/2^n,\quad \forall \ n.
\]
By the definition of the metric $\dd_{C([0,\infty);\Xw)}$, we have
\[
\dd_{C([0,\infty);\Xw)}(T(n)u_n,T(n)v_n)
\leq 2^n\dd_{C([0,\infty);\Xw)}(u_n,v_n),\quad \forall \ n.
\]
Hence,
\[
\dd_{C([0,\infty);\Xw)}(T(n)u_n,u)
< \epsilon,\quad \forall \ n,
\]
which implies that
\[
T (n)u_n  \notin B(P, \epsilon), \quad \forall \ n.
\]
Therefore, $P $ is not a trajectory attracting set for $\Dc$, which is  a contradiction.
\end{proof}

Furthermore, the asymptotic compactness of $\Dc$
implies a uniform strong convergence of solutions toward the
trajectory attractor.
\begin{theorem}\label{t:strongtrattracorasy}
Let $\Dc$ be an asymptotically compact evolutionary system satisfying A1. Then the
trajectory attractor $\mathfrak A$ uniformly attracts $\mathcal K^+$
in $L^\infty_{\mathrm{loc}}((0,\infty);\Xs)$.
\end{theorem}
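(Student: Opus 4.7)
The plan is to recognize this statement as essentially a translation/reformulation of the strong uniform tracking property (Theorem \ref{t:strongA0}, part~2) for $\Dc$, combined with the identification $\mathfrak A = \Pi_+\bar{\mathcal K}$ from Theorem \ref{t:weaktrattracor}. First I would unpack the conclusion: saying that $\mathfrak A$ uniformly attracts $\mathcal K^+$ in $L^\infty_{\mathrm{loc}}((0,\infty);\Xs)$ means that for every compact interval $[a,b]\subset(0,\infty)$ and every $\epsilon>0$ there exists $t_0$ such that, for every $t\ge t_0$ and every $u\in \mathcal K^+=\Dc([0,\infty))$, one can find $w\in\mathfrak A$ with
\[
\sup_{s\in[a,b]}\ds\bigl(u(t+s),w(s)\bigr)<\epsilon.
\]

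To produce such a $w$, I would first apply Theorem \ref{t:strongA0}(2) to $\Dc$ (which satisfies the hypotheses: asymptotically compact and A1) with the time window length $T:=b-a$. That gives a $t_0$ such that for any $t^*>t_0$ and any $u\in\Dc([0,\infty))$ there exists a complete trajectory $v\in\bar{\Dc}((-\infty,\infty))$ with $\ds(u(\tau),v(\tau))<\epsilon$ for every $\tau\in[t^*,t^*+T]$. Taking $t^*=t+a$ (so replacing $t_0$ by $t_0-a$ in the final statement), the bound holds for $\tau\in[t+a,t+b]$.

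Next I would convert this $v$ into an element of $\mathfrak A$. By property~2 in Definition~\ref{Dc} applied to the evolutionary system $\bar{\Dc}$, the translate $\tilde v(\cdot):=v(\cdot+t)$ also lies in $\bar{\Dc}((-\infty,\infty))=\bar{\mathcal K}$, so $w:=\tilde v|_{[0,\infty)}\in \Pi_+\bar{\mathcal K}=\mathfrak A$. Then for $s\in[a,b]$,
\[
\ds\bigl((T(t)u)(s),w(s)\bigr)=\ds\bigl(u(t+s),v(t+s)\bigr)<\epsilon,
\]
because $t+s\in[t+a,t+b]$. Taking the supremum over $s\in[a,b]$ gives the desired $L^\infty_{\mathrm{loc}}$ attraction.

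I do not expect any serious obstacle: the bulk of the work has already been done in establishing Theorems \ref{t:strongA0} and \ref{t:weaktrattracor}. The only point that requires care is checking that the tracking trajectory, which a priori lives in $\bar{\Dc}((-\infty,\infty))$ and tracks $u$ over an interval that drifts to infinity, can be shifted back to a complete trajectory whose positive-time restriction is in $\Pi_+\bar{\mathcal K}$; this is exactly guaranteed by the translation-invariance axiom of the evolutionary system. One small subtlety worth flagging explicitly is that we need $a>0$ so that $t^*=t+a>t_0$ is achievable for all $u$ uniformly (which is automatic once $t\ge t_0-a$), which is why the conclusion is stated on $(0,\infty)$ rather than $[0,\infty)$.
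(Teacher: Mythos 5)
Your proposal is correct and follows essentially the same route as the paper, whose proof is the one-line observation that the theorem is a consequence of the strong uniform tracking property (Theorem \ref{t:Dcasycomstru}); you have simply spelled out the translation of the tracking trajectory into an element of $\mathfrak A=\Pi_+\bar{\mathcal K}$ via the shift-invariance of $\bar{\Dc}((-\infty,\infty))$ and Theorem \ref{t:weaktrattracor}, which is exactly what the paper leaves implicit. (Your closing remark about needing $a>0$ is inessential: $a=0$ would work just as well since one only needs $t>t_0$.)
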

\begin{proof}
This is just a consequence of  Theorem \ref{t:Dcasycomstru}.
\end{proof}
Finally, by the  strong continuity of the complete trajectories, we have the following.
\begin{theorem}\label{t:strongtrattracor}
Let $\Dc$ be an evolutionary system satisfying A1, A2 and
A3. If $\mathfrak A\subset C([0, \infty);\Xs)$, then the
trajectory attractor $\mathfrak A$ uniformly attracts $\mathcal K^+$
in $L^\infty_{\mathrm{loc}}((0,\infty);\Xs)$.
\end{theorem}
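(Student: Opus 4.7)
The plan is to reduce the statement to Theorem \ref{t:strongtrattracorasy}, which already gives the desired $L^\infty_{\mathrm{loc}}((0,\infty);\Xs)$ attraction once $\Dc$ is known to be asymptotically compact. Asymptotic compactness in turn will come from Theorem \ref{t:AComp0}: assumptions A1, A2, A3 are in hand by hypothesis, so the only remaining thing to check is that every complete trajectory of the closure $\bar{\Dc}$ is strongly continuous, i.e.\ $\bar{\Dc}((-\infty,\infty))\subset C((-\infty,\infty);\Xs)$.

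By Theorem \ref{t:weaktrattracor}, the trajectory attractor is $\mathfrak A=\Pi_+\bar{\mathcal K}$, where $\bar{\mathcal K}=\bar{\Dc}((-\infty,\infty))$. Thus the hypothesis $\mathfrak A\subset C([0,\infty);\Xs)$ says exactly that the restriction to $[0,\infty)$ of each complete trajectory is strongly continuous; the task is to propagate this strong continuity to all of $\mathbb R$. Fix $u\in\bar{\mathcal K}$ and any $T\in\mathbb R$, and set $\tilde u(t):=u(t+T)$ for $t\in\mathbb R$. Since $\bar{\Dc}$ is an (autonomous) evolutionary system, property 2 of Definition \ref{Dc} holds for every $s\in\mathbb R$; applying it with $I=[\tau+T,\infty)$ and $s=-T$, the identity $\tilde u(\cdot-T)|_{[\tau+T,\infty)}=u|_{[\tau+T,\infty)}\in\bar{\Dc}([\tau+T,\infty))$ forces $\tilde u|_{[\tau,\infty)}\in\bar{\Dc}([\tau,\infty))$ for every $\tau\in\mathbb R$. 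Property 4 then gives $\tilde u\in\bar{\Dc}((-\infty,\infty))=\bar{\mathcal K}$, so $\tilde u|_{[0,\infty)}\in\Pi_+\bar{\mathcal K}=\mathfrak A\subset C([0,\infty);\Xs)$. Unwinding the shift, $u$ is strongly continuous on $[T,\infty)$; since $T$ was arbitrary, $u\in C((-\infty,\infty);\Xs)$, as desired.

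With $\bar{\Dc}((-\infty,\infty))\subset C((-\infty,\infty);\Xs)$ established, Theorem \ref{t:AComp0} yields the asymptotic compactness of $\Dc$, and then Theorem \ref{t:strongtrattracorasy} delivers the conclusion. The only mildly delicate ingredient is the time-translation argument showing that strong continuity on $[0,\infty)$ for all elements of $\Pi_+\bar{\mathcal K}$ forces strong continuity on $\mathbb R$ for all elements of $\bar{\mathcal K}$; everything else is a direct invocation of the machinery developed earlier in this section.
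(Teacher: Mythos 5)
Your proposal is correct and follows essentially the same route as the paper: deduce asymptotic compactness of $\Dc$ from Theorem \ref{t:AComp0} using the hypothesis $\mathfrak A\subset C([0,\infty);\Xs)$, then conclude via Theorem \ref{t:strongtrattracorasy}. The only difference is that you spell out, via the time-shift argument and $\mathfrak A=\Pi_+\bar{\mathcal K}$, why strong continuity of $\mathfrak A$ on $[0,\infty)$ gives $\bar{\Dc}((-\infty,\infty))\subset C((-\infty,\infty);\Xs)$ — a step the paper leaves implicit — and that detail is handled correctly.
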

\begin{proof}
Since $\mathfrak A\subset C([0, \infty);\Xs)$, Theorem \ref{t:AComp0} implies that the evolutionary system $\Dc$ is
asymptotically compact. Therefore, Theorem \ref{t:strongtrattracorasy} yields that $\mathfrak A$ uniformly attracts $\mathcal K^+$
in $L^\infty_{\mathrm{loc}}((0,\infty);\Xs)$.
\end{proof}

\section{3D Navier-Stokes equations}\label{3DNSE}
Consider the space periodic 3D incompressible
Navier-Stokes equations (NSE)
\begin{equation} \label{NSE1}
\left\{
\begin{aligned}
&\ddt u - \nu \Delta u + (u \cdot \nabla)u + \nabla p = f(t),\\
&\nabla \cdot u =0,
\end{aligned}
\right.
\end{equation}
where $u$, the velocity, and $p$, the pressure, are unknowns; $f(t)$
is a given driving force, and $\nu>0$ is the kinematic  viscosity
coefficient of the fluid. By a Galilean change of variables, we can
assume that the space average of $u$ is zero, i.e.,
\[
\int_\Omega u(x,t) \, dx =0, \qquad \forall t,
\]
where $\Omega=[0,L]^3$ is a periodic box.\footnote{The no-slip
case can be considered in a similar way, only with some adaption
on the functional  setting.}

First, let us introduce some notations and functional setting.
Denote by $(\cdot,\cdot)$ and $|\cdot|$ the $L^2(\Omega)^3$-inner
product and the corresponding $L^2(\Omega)^3$-norm. Let
$\mathcal{V}$ be the space of all $\mathbb{R}^3$ trigonometric
polynomials of period $L$ in each variable satisfying $\nabla \cdot
u =0$ and $\int_\Omega u(x) \, dx =0$. Let $H$ and $V$  be the
closures of $\mathcal{V}$ in $L^2(\Omega)^3$ and $H^1(\Omega)^3$,
respectively. Define the strong and weak distances by
\[
\ds(u,v):=|u-v|, \qquad
\dw(u,v)= \sum_{\kappa \in \mathbb{Z}^3} \frac{1}{2^{|\kappa|}}
\frac{|u_{\kappa}-v_{\kappa}|}{1 + |u_{\kappa}-v_{\kappa}|},
\qquad u,v \in H,
\]
where $u_{\kappa}$ and $v_{\kappa}$ are Fourier coefficients of $u$
and $v$ respectively. Note that the weak metric $\dw$ induces the
weak topology in any ball in $L^2(\Omega)^3$.

Let also  $P_{\sigma} : L^2(\Omega)^3 \to H$ be the $L^2$-orthogonal
projection, referred to as the Leray projector. Denote by
$A=-P_{\sigma}\Delta = -\Delta$ the Stokes operator with the domain
$D(A)=(H^2(\Omega))^3 \cap V$. The Stokes operator is a self-adjoint
positive operator with a compact inverse.
Let
\[
\|u\| := |A^{1/2} u|,
\]
which is called the enstrophy norm.
Note that $\|u\|$ is equivalent to the $H^1$-norm of $u$ for $u\in D(A^{1/2})$.

Now denote $B(u,v):=P_{\sigma}(u \cdot \nabla v)\in V'$ for all
$u, v \in V$. This bilinear form has the following property:
\[
\langle B(u,v),w\rangle=-\langle B(u,w),v\rangle, \qquad u,v,w \in V,
\]
in particular, $\langle B(u,v),v\rangle=0$ for all $u,v \in V$.

Now we can rewrite (\ref{NSE1}) as the following differential equation in $V'$:

\begin{equation} \label{NSE}
\ddt u + \nu A u +B(u,u) = g,\\
\end{equation}
where $u$ is a $V$-valued function of time and $g = P_{\sigma} f$.

\begin{definition}
A weak solution  of  \eqref{NSE1} on $[T,\infty)$ (or $(-\infty, \infty)$, if
$T=-\infty$) is an $H$-valued
function $u(t)$ defined for $t \in [T, \infty)$, such that
\[
\ddt u \in L_{\mathrm{loc}}^1([T, \infty); V'), \qquad
u(t) \in C([T, \infty); \Hw) \cap
L_{\mathrm{loc}}^2([T, \infty); V),
\]
and
\begin{equation}\label{IntNSE}
\left(u(t)-u(t_0), v\right) = \int_{t_0}^t \left( -\nu ((u, v)) - \langle B(u,u),v\rangle +\langle g, v\rangle \right) \, ds,
\end{equation}
for all $v \in V$ and  $T \leq t_0 \leq t $.
\end{definition}

\begin{theorem}[Leray, Hopf] \label{thm:Leray}
For every $u_0 \in H$ and $g \in L^2_\mathrm{loc}(\mathbb{R};V')$, there exists a
weak solution of (\ref{NSE1}) on $[T,\infty)$ with $u(T)=u_0$
satisfying the following energy inequality
\begin{equation} \label{EI}
|u(t)|^2 + 2\nu \int_{t_0}^t \|u(s)\|^2 \, ds \leq
|u(t_0)|^2 + 2\int_{t_0}^t \langle g(s), u(s)\rangle \, ds
\end{equation}
for all $t \geq t_0$, $t_0$ a.e. in $[T,\infty)$.
\end{theorem}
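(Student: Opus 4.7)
The plan is to construct a weak solution via the classical Galerkin approximation scheme and then verify the energy inequality by passing to the limit from the corresponding energy identity satisfied by the Galerkin approximations.

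First I would fix $T \in \mathbb{R}$, let $\{w_k\}_{k \geq 1}$ be the eigenfunctions of the Stokes operator $A$ with eigenvalues $0 < \lambda_1 \leq \lambda_2 \leq \cdots$, set $H_m := \operatorname{span}\{w_1,\dots,w_m\}$, and denote by $P_m$ the $L^2$-orthogonal projection onto $H_m$. For each $m$ I would consider the ODE system
\[
\ddt u_m + \nu A u_m + P_m B(u_m, u_m) = P_m g, \qquad u_m(T) = P_m u_0,
\]
in $H_m$. Since $g \in L^2_{\mathrm{loc}}(\mathbb{R}; V')$ and the nonlinearity is smooth on $H_m$, Carath\'eodory's theorem yields a local absolutely continuous solution. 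Testing with $u_m$ and using $\langle B(u_m,u_m), u_m\rangle = 0$ produces the energy identity
\[
|u_m(t)|^2 + 2\nu \int_{t_0}^{t} \|u_m(s)\|^2\, ds = |u_m(t_0)|^2 + 2\int_{t_0}^{t} \langle g(s), u_m(s)\rangle\, ds,
\]
from which Young's inequality yields uniform bounds on $u_m$ in $L^\infty(T,T';H) \cap L^2(T,T';V)$ for any $T' > T$, hence global existence on $[T,\infty)$.

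Next I would estimate the time derivative. Using $\|B(u_m,u_m)\|_{V'} \leq c|u_m|^{1/2} \|u_m\|^{3/2}$ (the standard 3D nonlinear estimate obtained via the Ladyzhenskaya-type inequality and the embedding $V \subset L^6$), one gets $\partial_t u_m$ bounded uniformly in $L^{4/3}(T,T';V')$. By the Aubin--Lions compactness lemma, applied with the compact embedding $V \subset H$, a subsequence (still denoted $u_m$) converges to a limit $u$ strongly in $L^2(T,T';H)$, weakly in $L^2(T,T';V)$, and weak-$\ast$ in $L^\infty(T,T';H)$. A diagonal extraction as $T' \nearrow \infty$ handles all of $[T,\infty)$. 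Strong $L^2(T,T';H)$ convergence is enough to pass to the limit in the nonlinear term, so $u$ satisfies the integral identity (\ref{IntNSE}) for all $v \in V$ and all $T \leq t_0 \leq t$. From $u \in L^\infty_{\mathrm{loc}}([T,\infty);H)$ and $\partial_t u \in L^{4/3}_{\mathrm{loc}}([T,\infty);V')$, a routine density argument gives the weakly continuous representative $u \in C([T,\infty); \Hw)$.

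The main technical point is obtaining the energy inequality (\ref{EI}) in the limit. On the left-hand side, weak lower semicontinuity of the $H$-norm (applied to the weakly continuous representative of $u$) handles $|u(t)|^2$, and weak lower semicontinuity of the $L^2(t_0,t;V)$ norm handles $\int_{t_0}^t \|u(s)\|^2\, ds$. On the right-hand side, the forcing term $\int_{t_0}^t \langle g, u_m\rangle\, ds \to \int_{t_0}^t \langle g, u\rangle\, ds$ by weak convergence in $L^2(T,T';V)$. Finally, $|u_m(t_0)|^2 \to |u(t_0)|^2$ holds for $t_0 = T$ because $P_m u_0 \to u_0$ strongly in $H$, and for a.e.\ $t_0 \in (T,\infty)$ by extracting a further subsequence from the strong $L^2(T,T';H)$ convergence that converges pointwise in $H$. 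Combining these facts yields the energy inequality (\ref{EI}) at $t_0 = T$ and for a.e.\ $t_0 > T$, which is the stated conclusion.
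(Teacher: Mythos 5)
The paper does not actually prove this statement: it is quoted as the classical Leray--Hopf existence theorem (the relevant arguments being in the cited monographs, e.g.\ Constantin--Foias and Temam), and the paper's real work only begins afterwards, with Lemma \ref{l:precompactofLH} treating limits of Leray--Hopf solutions. Your Galerkin construction is precisely the standard textbook proof of this classical result, and it is essentially correct: the energy identity for $u_m$, the $L^\infty(H)\cap L^2(V)$ and $L^{4/3}(V')$ bounds, Aubin--Lions compactness, passage to the limit in the nonlinearity via strong $L^2(H)$ convergence, and the limiting energy inequality are all the right steps. One point you should make explicit, because the statement requires (\ref{EI}) for \emph{all} $t\geq t_0$ and not just a.e.\ $t$: lower semicontinuity of $|u(t)|^2$ under the limit $m\to\infty$ at a \emph{fixed} $t$ needs $u_m(t)\rightharpoonup u(t)$ in $H$ for that $t$, which does not follow from strong $L^2(T,T';H)$ convergence alone. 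You can close this either by upgrading the convergence to $C([T,T'];\Hw)$ (uniform $H$-bound plus the $L^{4/3}(V')$ bound on $\partial_t u_m$ give equicontinuity into $V'$, then an Arzel\`a--Ascoli/diagonal argument, exactly as in the paper's Lemma \ref{l:precompactofLH} for sequences of solutions), or by first proving (\ref{EI}) for a.e.\ $t$ and a.e.\ $t_0$ and then extending to all $t\geq t_0$ using $u\in C([T,\infty);\Hw)$, weak lower semicontinuity of the norm as $t'\to t$ along admissible times, and continuity in $t$ of the integral terms. With that detail added, your proposal is a complete and standard proof of the quoted theorem; it differs from the paper only in that the paper does not reprove it.
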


\begin{definition} \label{d:ex}
A Leray-Hopf solution of \eqref{NSE1} on the interval $[T, \infty)$
is a weak solution on $[T,\infty)$ satisfying the
energy inequality (\ref{EI}) for all $T \leq t_0 \leq t$,
$t_0$ a.e. in $[T,\infty)$. The set $Ex$ of measure $0$ on which the energy
inequality does not hold will be called the exceptional set.
\end{definition}

Now fix an external  force $g_0$  that is   translation bounded in
$L^2_\mathrm{loc}(\mathbb{R};V')$ , i.e.,
\[
\|g_0\|^2_{L^2_{\mathrm{b}}} := \sup_{t \in \mathbb{R}} \int_t^{t+1}
\|g_0(s)\|_{V'}^2 \, ds < \infty.
\]
Then $g_0$ is translation compact in
$L^{2,\mathrm{w}}_\mathrm{loc}(\mathbb{R};V')$, i.e., the
translation family of $g_0$
\[
\Sigma:=\{g_0(\cdot+h)|h\in \mathbb R\}
\]
is precompact in
$L^{2,\mathrm{w}}_\mathrm{loc}(\mathbb{R};V')$.  Note that,
\begin{equation}\label{i:g}\|g\|^2_{L^2_{\mathrm{b}}}\leq
\|g_0\|^2_{L^2_{\mathrm{b}}},\quad \forall\, g\in
\Sigma.\end{equation}Due to the energy inequality \eqref{EI} we have
\[
|u(t)|^2 + \nu \int_{t_0}^t \|u(s)\|^2 \, ds \leq |u(t_0)|^2 +
\frac{1}{\nu}\int_{t_0}^t \|g(s)\|^2_{V'} \, ds, \qquad \forall g
\in \Sigma,
\]
for all $t \geq t_0$, $t_0$ a.e. in $[T,\infty)$. Here $u(t)$ is a Leray-Hopf solutions of  \eqref{NSE1} with force
$g$ on $[T,\infty)$.  By Gronwall's inequality there
exists an absorbing ball $B_{\mathrm{s}}(0, R)$, where the radius
$R$ depends on $L$, $\nu$, and $\|g_0\|^2_{L^2_{\mathrm{b}}}$.
 Let $X$ be a closed absorbing ball
\[
X= \{u\in H: |u| \leq R\},
\]
which is also weakly compact. Then for any bounded set $A \subset
H$, there exists a time $t_1\ge T$, such that
\[
u(t) \in X, \qquad \forall t\geq t_1,
\]
for every Leray-Hopf solution $u(t)$ with the force $g\in \Sigma$
and the initial data $u(T) \in A$.  For any sequence of
Leray--Hopf solutions $u_n$ the following result holds.

\begin{lemma} \label{l:precompactofLH}
Let $u_n(t)$ be a sequence of Leray-Hopf solutions of  \eqref{NSE1} with forces
$g_n \in \Sigma$, such that $u_n(t) \in X$ for all $t\geq t_1$. Then
\begin{equation}\label{bdofun}
\begin{split}
u_n \ \ &\mbox{is bounded in} \ \ L^2(t_1,t_2;V)\ \ \mbox{and} \ \ L^\infty(t_1,t_2;H),\\
\ddt u_n \ \  &\mbox{is bounded in} \ \ L^{4/3}(t_1,t_2;V'),
\end{split}
\end{equation}
for all $t_2>t_1$. Moreover, there exists a subsequence $u_{n_j}$ converges to some $u(t)$ in $C([t_1, t_2]; \Hw)$,
i.e.,
\[
(u_{n_j},v) \to (u,v) \qquad
\mbox{uniformly on} \qquad  [t_1,t_2],
\]
as $n_j\to \infty$, for all $v \in H$.
\end{lemma}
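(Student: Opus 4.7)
The plan is to derive the three boundedness claims in \eqref{bdofun} from the energy inequality and the NSE itself, and then extract the weakly-convergent subsequence by an Arzelà–Ascoli type argument.

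The $L^\infty(t_1,t_2;H)$ bound is immediate from the assumption $u_n(t)\in X$, which gives $|u_n(t)|\leq R$ for all $t\geq t_1$. For the $L^2(t_1,t_2;V)$ bound, I would apply the energy inequality \eqref{EI} with $t=t_2$ and $t_0$ chosen outside the exceptional set and arbitrarily close to $t_1$ (so that $|u_n(t_0)|\leq R'$ for some $R'$ depending only on $R$ via the absorbing property). Combining this with the translation-boundedness estimate \eqref{i:g}, namely $\int_{t_0}^{t_2}\|g_n(s)\|_{V'}^2\,ds\leq (t_2-t_1+1)\|g_0\|^2_{L^2_{\mathrm{b}}}$, one obtains a bound on $\nu\int_{t_1}^{t_2}\|u_n(s)\|^2\,ds$ depending only on $R$, $\nu$, $t_2-t_1$, and $\|g_0\|_{L^2_{\mathrm{b}}}$.

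For the $L^{4/3}(t_1,t_2;V')$ bound on $\partial_t u_n$, I would use the equation $\partial_t u_n=-\nu Au_n-B(u_n,u_n)+g_n$ in $V'$ and estimate each term. Clearly $\nu Au_n$ lies in $L^2(t_1,t_2;V')$ thanks to the $L^2(V)$ bound, and $g_n$ lies in $L^2(t_1,t_2;V')$ by translation boundedness; both are then in $L^{4/3}(t_1,t_2;V')$. The main obstacle — and the reason for the exponent $4/3$ — is the nonlinear term in dimension three. The standard Sobolev/Ladyzhenskaya estimate $\|B(u,u)\|_{V'}\leq c|u|^{1/2}\|u\|^{3/2}$ gives
\begin{equation*}
\int_{t_1}^{t_2}\|B(u_n,u_n)\|_{V'}^{4/3}\,ds\leq c\int_{t_1}^{t_2}|u_n|^{2/3}\|u_n\|^2\,ds\leq cR^{2/3}\int_{t_1}^{t_2}\|u_n(s)\|^2\,ds,
\end{equation*}
which is finite by the previous step. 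This is the sharp exponent in 3D and is the step where one cannot do better without a regularity assumption.

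Finally, for the $C([t_1,t_2];H_{\mathrm{w}})$ convergence I would proceed by Arzelà–Ascoli on scalar functions $t\mapsto(u_n(t),v)$. The uniform bound $|u_n(t)|\leq R$ provides uniform boundedness, while the $L^{4/3}(t_1,t_2;V')$ bound on $\partial_t u_n$ yields equicontinuity for each $v\in V$, since
\begin{equation*}
|(u_n(t),v)-(u_n(s),v)|\leq\int_s^t\|\partial_t u_n(\tau)\|_{V'}\,d\tau\,\|v\|\leq |t-s|^{1/4}\|\partial_t u_n\|_{L^{4/3}(t_1,t_2;V')}\|v\|.
\end{equation*}
By Arzelà–Ascoli, a diagonal subsequence makes $(u_{n_j}(t),v)$ converge uniformly on $[t_1,t_2]$ for every $v$ in a countable dense subset of $V$; the uniform $H$-bound combined with the density of $V$ in $H$ extends this to every $v\in H$, producing the desired $C([t_1,t_2];H_{\mathrm{w}})$ limit $u(t)$. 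The remaining small obstacle here is verifying that the limit $u$ takes values in $X$ (hence in $H$) and lies in $C([t_1,t_2];H_{\mathrm{w}})$, but this follows from weak lower semicontinuity of the norm and the pointwise weak convergence in $H$.
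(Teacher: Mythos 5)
Your proof is correct, and it reaches the conclusion by a somewhat leaner route than the paper. The a priori bounds are obtained exactly as in the paper's (sketched) argument: the $L^\infty(t_1,t_2;H)$ bound from $u_n(t)\in X$, the $L^2(t_1,t_2;V)$ bound from the energy inequality together with the translation boundedness \eqref{i:g} (your handling of the exceptional set by taking $t_0\downarrow t_1$ is the right care to take; note also $|u_n(t_0)|\le R$ directly since $u_n(t_0)\in X$, no absorbing argument is needed), and the $L^{4/3}(t_1,t_2;V')$ bound for $\frac{d}{dt}u_n$ from the equation and the estimate $\|B(u,u)\|_{V'}\le c|u|^{1/2}\|u\|^{3/2}$. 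Where you diverge is the final step: the paper follows the classical compactness sketch of \cite{CF89, Ro01} — extracting weak-star $L^\infty(H)$, weak $L^2(V)$, strong $L^2(H)$ (Aubin--Lions) limits, passing to the limit in the equation, and then reading the $C([t_1,t_2];\Hw)$ convergence off the integral identity \eqref{IntNSE} — whereas you go directly by Arzel\`a--Ascoli on the scalar functions $t\mapsto(u_n(t),v)$, using the uniform $H$-bound and the equicontinuity supplied by the $L^{4/3}(V')$ bound, a diagonal argument over a countable dense set of $v$'s, and density of $V$ in $H$. Your argument is self-contained (no Aubin--Lions compactness, no limit equation) and proves precisely what the lemma asserts; the paper's route additionally produces the limit equation for $u$, which is not needed here but is reused later (e.g.\ in Lemma \ref{l:compactofLH}, where translation compactness of the forces upgrades the limit to a Leray--Hopf solution). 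Your closing remark that $u$ is only known to take values in $X$ and be weakly continuous, with no claim that it is itself a Leray--Hopf solution, is consistent with Remark \ref{r:limitLH}.
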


\begin{proof}The proof is standard (see e.g. \cite{CF89, Ro01}). Here we just sketch some steps. Take a sequence $u_n$ satisfying \eqref{NSE1} with forces $g_n$. By \eqref{NSE}, we have
\begin{equation} \label{NSEn}
\ddt u_n + \nu A u_n +B(u_n,u_n) = g_n,\\
\end{equation}
Classical estimates imply the boundedness in \eqref{bdofun}. Then, passing to  a subsequence and dropping a subindex, we can obtain that
\[
\begin{aligned}
u_{n}\rightarrow u \quad &\text{weak-star in }L^\infty(t_1,t_2;H),\\
&\text{weakly in }L^2(t_1,t_2;V),\\
&\text{strongly in }L^2(t_1,t_2;H),
\end{aligned}
\]
  and
 \[
\begin{aligned} \ddt u_{n}\rightarrow \ddt u \quad &\text{weakly in
}L^{4/3}(t_1,t_2;V'),\\
Au_{n}\rightarrow Au\quad &\text{weakly in }L^2(t_1,t_2;V'),\\
B(u_n,u_n)\rightarrow B(u,u)\quad &\text{weakly in }L^{4/3}\left(t_1,t_2;V'\right),
\end{aligned}
\]
 for some
\[
u\in L^\infty(t_1,t_2;H)\cap L^2(t_1,t_2;V).
\]
Again, passing to  a subsequence and dropping a subindex, we also have,
\begin{equation}\label{convofgn}
g_n\rightarrow g\quad \text{ weakly in
}L^{2}(t_1,t_2;V'),\end{equation}with $g\in L^{2}(t_1,t_2;V')$.
Passing to the limit  yields
\[
\begin{aligned}\label{quasiequ} \ddt u + \nu A u +B(u,u) = g.
\end{aligned}
\]
It follows from \eqref{IntNSE} that $u_{n}\rightarrow u$ in $C([t_1,t_2]; H_{\mathrm w})$.
\end{proof}

\begin{remark} \label{r:limitLH}
In the autonomous case, i.e., $f(t)$ is independent of $t$, the limit $u$ is a Leray-Hopf solution. However, we don't know here whether it is a Leray-Hopf solution yet.
\end{remark}
Consider an evolutionary system for which a family of trajectories
consists of all Leray-Hopf solutions of the 3D Navier-Stokes
equations with a fixed force $g_0$ in $X$. More precisely, define
\[
\begin{split}
\Dc([T,\infty)) := \{&u(\cdot): u(\cdot)
\mbox{ is a Leray-Hopf}
\mbox{ solution on } [T,\infty)\\
&\mbox{with the force } g\in \Sigma \mbox{ and } u(t) \in X, \
\forall t \in [T,\infty)\}, \   T \in \mathbb{R},
\end{split}
\]
\[
\begin{split}
\Dc((-\infty,\infty)) := \{&u(\cdot): u(\cdot)
\mbox{ is a Leray-Hopf} \ \mbox{ solution on } (-\infty,\infty)\\
&\mbox{with the force } g\in \Sigma \mbox{ and } u(t) \in X, \
\forall t \in (-\infty,\infty)\}.
\end{split}
\]

Clearly, the properties 1--4 of $\Dc$ hold, if we utilize the
translation semigroup $\{T(s)\}_{s\ge 0}$. Therefore, thanks to
Theorem~\ref{t:weakA0}, the uniform  weak global attractor $\Aw$ for this
evolutionary system exists.

 Now  we give the definition of normal function which was first put forward
in \cite{LWZ05}.
\begin{definition}\label{d:normal} Let $\mathcal B$ be a Banach space. A function $\varphi (s)\in L^2_{\mathrm{loc}}(\mathbb{R};\mathcal B
)$ is said to be normal in $L^2_{\mathrm{loc}}(\mathbb{R};\mathcal B
)$ if for any $\epsilon>0$, there exists $\delta>0$, such that
\[
\sup_{t\in \mathbb{R}} \int_{t}^{t+\delta}
\|\varphi(s)\|^2_{\mathcal B} \, ds \leq  \epsilon.
\]
\end{definition}Note that the class of normal
functions is a proper closed subspace of
the class of translation bounded functions (see \cite{LWZ05} for more details). Then, we have the
following.
\begin{lemma} \label{l:compact0}
The evolutionary system $\Dc$ of the 3D NSE with the force $g_0$
satisfies A1 and A3. Moreover, if $g_0$ is normal in
$L^2_{\mathrm{loc}}(\mathbb{R}; V')$ then A2 holds.
\end{lemma}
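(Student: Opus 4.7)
The plan is to verify A1, A3, and (under normality of $g_0$) A2 separately, in each case leveraging the a priori bounds from the proof of Lemma~\ref{l:precompactofLH} and the Leray-Hopf energy inequality \eqref{EI}. For A1, take any sequence $u_n \in \Dc([0,\infty))$. Each $u_n(t) \in X$ for all $t \geq 0$, so Lemma~\ref{l:precompactofLH} yields, for every fixed integer $T \geq 1$, a subsequence convergent in $C([0,T]; \Hw)$. Since $\dw$ generates the weak topology on the bounded set $X$, this coincides with convergence in $C([0,T]; \Xw)$, and a Cantor diagonal over $T = 1, 2, 3, \dots$ produces a subsequence convergent in $C([0,T]; \Xw)$ for every $T>0$, which is precisely convergence in $C([0,\infty); \Xw)$ in the metric of Section~\ref{s:ES}.

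For A3, let $u_k \in \Dc([0,\infty))$ be $\dd_{C([0,T];\Xw)}$-Cauchy, with $\Xw$-continuous limit $u$. The a priori bounds recalled in the proof of Lemma~\ref{l:precompactofLH} show $\{u_k\}$ is bounded in $L^\infty(0,T;H) \cap L^2(0,T;V)$ and $\{du_k/dt\}$ in $L^{4/3}(0,T;V')$, so Aubin-Lions gives relative $L^2(0,T;H)$-compactness; uniqueness of the pointwise weak limit forces the whole sequence $u_k \to u$ strongly in $L^2(0,T;H)$. A further subsequence extraction then yields $u_k(t) \to u(t)$ in $H$ for a.e.\ $t \in [0,T]$, and in particular $\{u_k(t)\}_k$ is $\ds$-Cauchy a.e.\ in $[0,T]$. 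This last step — promoting $L^2$-strong convergence to pointwise a.e.\ strong Cauchy behavior — is the most delicate point and, I expect, the main technical obstacle; it is precisely the sense in which A3 is used in the closure argument of Lemma~\ref{l:eqA02A03A2A3}.

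For A2 under normality, fix $u \in \Dc([0,\infty))$ with force $g \in \Sigma$ and $t>0$. For any $t_0 \in (t-\delta,t)$ outside the (measure-zero) exceptional set, \eqref{EI} with the dissipation term dropped and Cauchy-Schwarz applied to $\int_{t_0}^t \langle g, u\rangle\,ds$ give
\[
|u(t)|^2 - |u(t_0)|^2 \leq 2\left(\int_{t_0}^t \|g\|_{V'}^2\,ds\right)^{1/2}\left(\int_{t_0}^t \|u\|^2\,ds\right)^{1/2}.
\]
Reusing \eqref{EI} together with $|u(t_0)|\leq R$ and Young's inequality bounds $\int_{t_0}^t \|u\|^2\,ds$ by $C_\nu(R^2 + B^2)$ where $B^2 := \int_{t_0}^t \|g\|_{V'}^2\,ds$, so the right-hand side is at most $C_\nu(RB + B^2)$. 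Every $g \in \Sigma$ is a translate of $g_0$, hence normality of $g_0$ implies $\sup_{g\in\Sigma}\sup_s \int_s^{s+\delta}\|g\|_{V'}^2\,dr \to 0$ as $\delta \to 0$; choosing $\delta$ so that this bound is at most $\epsilon^2/2$, and converting the quadratic estimate $|u(t)|^2 \leq |u(t_0)|^2 + \epsilon^2/2$ into $|u(t)| \leq |u(t_0)| + \epsilon$ by a routine case analysis (according to whether $|u(t)| \geq \epsilon/2$ or not), completes A2. Without normality the quantity $B$ need not vanish with $\delta$, which is exactly why the stronger hypothesis is invoked here.
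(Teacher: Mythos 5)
Your argument is correct and essentially the paper's own: A1 by applying Lemma~\ref{l:precompactofLH} on $[0,T]$ and diagonalizing over $T$, A3 by upgrading the a priori bounds to strong $L^2(0,T;H)$ convergence of the whole sequence and then to a.e.\ strong convergence, and A2 from the energy inequality \eqref{EI} with normality making $\int_{t_0}^t\|g(s)\|_{V'}^2\,ds$ uniformly small on short intervals (the paper absorbs the nonhomogeneous term by Young's inequality rather than your Cauchy--Schwarz plus separate bound on $\int\|u\|^2$, a cosmetic difference). The one point to tighten is your A3 wording: extracting a subsequence that converges a.e.\ does not by itself yield that the \emph{full} sequence is $\ds$-Cauchy a.e., so argue as the paper does, namely that $\int_0^T|u_k(s)-u(s)|^2\,ds\to 0$ together with $u_k(t)\rightharpoonup u(t)$ for every $t$ gives $|u_k(t)|\to|u(t)|$ and hence strong convergence of the whole sequence a.e.\ on $[0,T]$.
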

\begin{proof}
First note that $\Dc([0,\infty)) \subset C([0,\infty);\Hw)$
by the definition of a Leray-Hopf solution. Now take any sequence
$u_n \in \Dc([0,\infty))$, $n=1,2, \dots$. Thanks to
Lemma~\ref{l:precompactofLH}, there exists a subsequence, still
denoted by  $u_n$, that converges to some $u^{1} \in
C([0, 1];\Hw)$ in $C([0, 1];\Hw)$ as $n \to \infty$.
Passing to a subsequence and dropping a subindex once more, we
obtain that $u_n \to u^2$ in $C([0, 2];\Hw)$ as $n \to \infty$ for
some $u^{2} \in C([0, 2];\Hw)$. Note that $u^1(t)=u^2(t)$
on $[0, 1]$. Continuing this diagonalization process, we obtain a
subsequence $u_{n_j}$ of $u_n$ that converges to some $u \in
C([0, \infty);\Hw)$ in $C([0, \infty);\Hw)$ as $n_j \to \infty$.
Therefore, A1 holds.

Let now $u_n \in \Dc([0,\infty))$ be such that $u_n \to
u\in C([0, T];\Hw)$ in $C([0, T];\Hw)$ as $n\to \infty$ for
some $T>0$. Thanks to Lemma~\ref{l:precompactofLH} again, the sequence
$\{u_n\}$ is bounded in $L^2([0,T];V)$. Hence,
\[
\int_{0}^T |u_n(s)-u(s)|^2 \, ds \to 0, \qquad \mbox{as}
\qquad  n \to \infty.
\]
In particular, $|u_n(t)| \to |u(t)|$ as $n \to \infty$ a.e. on $[0,T]$,
i.e., A3 holds.

Now assume that $g_0$ is normal in $L^2_{\mathrm{loc}}(\mathbb{R};
V')$.
Then given $\epsilon>0$, there exists $\delta>0$, such that
\[
\sup_{t\in \mathbb{R}} \int_{t-\delta}^t \|g_0(s)\|^2_{V'} \, ds
\leq \nu \epsilon.
\]
Take any $u \in \Dc([0,\infty))$ and $t>0$. Since $u(t)$ is a
Leray-Hopf solution, it satisfies the energy inequality \eqref{EI}
\[
|u(t)|^2 + 2\nu \int_{t_0}^t \|u(s)\|^2 \, ds \leq
|u(t_0)|^2 + 2\int_{t_0}^t \langle g(s), u(s)\rangle \, ds,
\]
for all $0 \leq t_0 \leq t$, $t_0 \in [0,\infty) \setminus Ex$,
where $Ex$ is a set of zero measure. Hence, together with
(\ref{i:g}),
\[
\begin{split}
|u(t)|^2  &\leq |u(t_0)|^2 + \frac{1}{\nu}\int_{t_0}^t
\|g_0\|^2_{V'}\, ds\\ &\leq |u(t_0)|^2 + \epsilon,
\end{split}
\]
for all $t_0\geq 0$, such that $t_0 \in (t-\delta,t)\setminus Ex$.
Therefore, A2 holds.
\end{proof}

Now Lemma \ref{l:compact0}, Theorem \ref{t:weakA0}, \ref{t:strongA0} and \ref{t:AComp0} yield the following.
\begin{theorem}\label{t:Aw0NSE} The uniform weak global attractor $\Aw$ for the 3D
NSE with force $g_0$ exists, $\Aw$ is the maximal invariant and
maximal quasi-invariant set w.r.t. the closure $\bar{\Dc}$ of the corresponding evolutionary system $\Dc$, and
\[
\Aw = \ww(X)=\ws(X)=\{u(0): u\in \bar{\Dc}((-\infty,
\infty))\}.
\]
 Moreover, the weak uniform tracking property holds.
\end{theorem}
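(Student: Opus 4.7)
The plan is to invoke the abstract Theorem~\ref{t:weakA0} for the evolutionary system $\Dc$ generated by all Leray--Hopf solutions of the 3D NSE with force in $\Sigma$. Existence of the weak uniform global attractor $\Aw$ in part 1 of Theorem~\ref{t:weakA0} is unconditional, so that piece is immediate. For the remaining assertions the key hypothesis is property A1.

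The verification of A1 is exactly the content of Lemma~\ref{l:compact0}: starting from the uniform bounds on Leray--Hopf solutions in $L^\infty([0,T];H)\cap L^2([0,T];V)$ that follow from the energy inequality, together with the bound on $\ddt u$ in $L^{4/3}([0,T];V')$ and the Aubin--Lions compactness already carried out in Lemma~\ref{l:precompactofLH}, a standard diagonal extraction produces for any sequence in $\Dc([0,\infty))$ a subsequence convergent in $C([0,\infty);\Hw)$. Once A1 is in hand, parts 2, 3 and 4 of Theorem~\ref{t:weakA0} directly yield the chain of equalities
\[
\Aw=\ww(X)=\ws(X)=\{u(0):u\in\bar{\Dc}((-\infty,\infty))\},
\]
the characterization of $\Aw$ as the maximal invariant and maximal quasi-invariant set with respect to $\bar{\Dc}$, and the weak uniform tracking property. (The identification with $\ws(X)$ uses the trivial inclusion $\ws(X)\subset\ww(X)$ together with $\Aw=\bar\omega_{\mathrm s}(X)$ from Theorem~\ref{t:weakA0} and the fact that any trajectory in $\bar{\Dc}$ is approximated by trajectories in $\Dc$ strongly at almost every time via A3, which is also supplied by Lemma~\ref{l:compact0}.)

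Thus the plan reduces essentially to quoting Lemma~\ref{l:compact0} and Theorem~\ref{t:weakA0}; the real content sits in Lemma~\ref{l:compact0}, where all the functional-analytic input for the 3D NSE has already been absorbed into the verification of A1 (and A3). From the perspective of this section, Theorem~\ref{t:Aw0NSE} is a direct structural application of the closure framework built in Section~\ref{s:closing}, and the only place one could anticipate a difficulty is precisely where Lemma~\ref{l:compact0} was proved.
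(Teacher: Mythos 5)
Your proposal is correct and is essentially the paper's own proof, which is literally the one-line deduction that Lemma~\ref{l:compact0} (supplying A1 and A3) together with Theorem~\ref{t:weakA0} yields the statement. Your parenthetical on identifying $\ws(X)$ is if anything more careful than the paper, which simply reads that term off as the $\bar{\omega}_{\mathrm s}(X)$ appearing in part 2 of Theorem~\ref{t:weakA0}.
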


\begin{theorem}\label{t:As0NSE}
If  $g_0$ is normal in $L^2_{\mathrm{loc}}(\mathbb{R}; V')$ and
every complete trajectory of $\bar{\Dc}$ is strongly
continuous, then the weak global attractor $\Aw$ is a strongly
compact strong global attractor $\As$. Moreover, the strong
uniform tracking property holds.
\end{theorem}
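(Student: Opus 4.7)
The plan is to bootstrap directly from the preceding Theorem~\ref{t:Aw0NSE} by verifying the abstract hypotheses of Theorems~\ref{t:AComp0} and~\ref{t:strongA0} for the 3D NSE evolutionary system $\Dc$. The scheme is: (i) check A1, A2, A3 for $\Dc$; (ii) use the strong continuity hypothesis to invoke Theorem~\ref{t:AComp0} and deduce asymptotic compactness; (iii) invoke Theorem~\ref{t:strongA0} to upgrade $\Aw$ to a strongly compact strong global attractor and to obtain the strong uniform tracking property.

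First I would recall Lemma~\ref{l:compact0}, which states that $\Dc$ unconditionally satisfies A1 and A3, and, under the additional normality assumption on $g_0\in L^2_{\mathrm{loc}}(\mathbb{R};V')$, also A2. Since $g_0$ is assumed normal, all three conditions A1, A2, A3 are therefore in force for $\Dc$. Next, the hypothesis that every complete trajectory of $\bar{\Dc}$ is strongly continuous is exactly the statement $\bar{\Dc}((-\infty,\infty))\subset C((-\infty,\infty);\Xs)$ that appears as the auxiliary assumption of Theorem~\ref{t:AComp0}. Applying that theorem to $\Dc$ yields the asymptotic compactness of $\Dc$. With asymptotic compactness and A1 both established, Theorem~\ref{t:strongA0}(1) gives that the strong global attractor $\As$ exists, is strongly compact, and coincides with $\Aw$ (already produced in Theorem~\ref{t:Aw0NSE}); Theorem~\ref{t:strongA0}(2) then furnishes the strong uniform tracking property, completing the argument.

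No essentially new obstacle appears at the level of this theorem: the real work lies in the already proved Lemma~\ref{l:compact0}, where A2 is extracted from the energy inequality \eqref{EI} together with the uniform smallness of $\int_{t_0}^{t}\|g_0(s)\|_{V'}^{2}\,ds$ on short intervals granted by the normality of $g_0$, and in the abstract closure lemmas (Lemmas~\ref{l:eqA01A1} and~\ref{l:eqA02A03A2A3}) that underlie Theorem~\ref{t:AComp0}. Once those preparatory results are in hand, the deduction of Theorem~\ref{t:As0NSE} is a direct concatenation of abstract results, so the statement reduces to a bookkeeping exercise.
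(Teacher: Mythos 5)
Your proposal is correct and follows exactly the route the paper takes: the paper proves Theorem~\ref{t:As0NSE} implicitly by combining Lemma~\ref{l:compact0} (A1, A3 always, A2 under normality of $g_0$) with Theorems~\ref{t:AComp0} and~\ref{t:strongA0}, precisely as you describe. Your identification of the strong-continuity hypothesis with the condition $\bar{\Dc}((-\infty,\infty))\subset C((-\infty,\infty);\Xs)$ in Theorem~\ref{t:AComp0} is the only nontrivial matching step, and you handle it correctly.
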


Finally, we obtain the trajectory attractor for 3D NSE  with a fixed time-dependent force $g_0$ due to Theorems \ref{t:weaktrattracor} and \ref{t:strongtrattracor}.

\begin{theorem}\label{t:traattractor}
The trajectory attractor for 3D NSE with force $g_0$ exists and
\[\mathfrak A=\Pi_+\bar{\Dc}((-\infty, \infty))=\{u(\cdot)|_{[0,\infty)}:u\in \bar{\Dc}((-\infty, \infty))\}, \]
satisfying
\[\Aw=\mathfrak A(t)=\{u(t): u\in \mathfrak A\},\quad \forall\, t\geq 0.\]
Furthermore, if $g_0$ is normal in $L^2_{\mathrm{loc}}(\mathbb{R}, V')$ and every complete trajectory of $\bar{\Dc}$ is strongly continuous then the trajectory attractor $\mathfrak A$ uniformly attracts $\Dc([0,\infty))$
in $L^\infty_{\mathrm{loc}}((0,\infty);H)$.
\end{theorem}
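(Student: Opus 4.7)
The strategy is to invoke the abstract machinery already developed in the paper, plugging in the verifications of A1--A3 supplied by Lemma~\ref{l:compact0}. The evolutionary system $\Dc$ built from Leray--Hopf solutions with symbols in the translation family $\Sigma$ of $g_0$ satisfies A1 unconditionally, so Theorem~\ref{t:weaktrattracor} applies verbatim to yield the existence of the trajectory attractor $\mathfrak A$ together with the identification
\[
\mathfrak A=\Pi_+\bar{\mathcal K}=\Pi_+\bar{\Dc}((-\infty,\infty)),
\]
and the sectional identity $\Aw=\mathfrak A(t)$ for all $t\geq 0$. The first part of the statement is therefore immediate.

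For the convergence in $L^\infty_{\mathrm{loc}}((0,\infty);H)$ under the additional assumptions, the plan is to appeal to Theorem~\ref{t:strongtrattracor}. That theorem requires A1, A2, A3 together with the condition $\mathfrak A\subset C([0,\infty);\Xs)$. Lemma~\ref{l:compact0} supplies A1 and A3 and, once we assume $g_0$ is normal in $L^2_{\mathrm{loc}}(\mathbb{R};V')$, also A2. Thus the only remaining verification is $\mathfrak A\subset C([0,\infty);\Xs)$. But since $\mathfrak A=\Pi_+\bar{\Dc}((-\infty,\infty))$ consists precisely of restrictions to $[0,\infty)$ of complete trajectories of $\bar{\Dc}$, the hypothesis that every complete trajectory of $\bar{\Dc}$ is strongly continuous gives exactly this inclusion. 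Hence Theorem~\ref{t:strongtrattracor} yields the uniform strong attraction of $\mathcal K^+=\Dc([0,\infty))$ toward $\mathfrak A$ in $L^\infty_{\mathrm{loc}}((0,\infty);H)$.

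The only step with any subtlety is recognizing that the ``strong continuity of complete trajectories'' assumption is formulated with respect to the closure $\bar{\Dc}$ and not the original $\Dc$; this is exactly what is needed because $\mathfrak A$ is expressed in terms of the kernel of $\bar{\Dc}$. Everything else is a direct citation, so the proof reduces to a one-paragraph assembly: verify the three axioms via Lemma~\ref{l:compact0}, invoke Theorem~\ref{t:weaktrattracor} for existence and structure, and invoke Theorem~\ref{t:strongtrattracor} (whose hypothesis $\mathfrak A\subset C([0,\infty);\Xs)$ is covered by the strong continuity assumption) for the $L^\infty_{\mathrm{loc}}$ convergence.
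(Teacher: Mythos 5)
Your proposal is correct and follows essentially the same route as the paper, which obtains Theorem~\ref{t:traattractor} directly from Lemma~\ref{l:compact0} (A1, A3, and A2 under normality) together with Theorems~\ref{t:weaktrattracor} and~\ref{t:strongtrattracor}. Your observation that strong continuity of the complete trajectories of $\bar{\Dc}$ is exactly what gives $\mathfrak A=\Pi_+\bar{\Dc}((-\infty,\infty))\subset C([0,\infty);\Xs)$ is the same (implicit) step the paper relies on.
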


\section{Open problems}\label{s:OP}
In this section we assume that  $g_0$ is translation compact in
$L^2_\mathrm{loc}(\mathbb{R};V')$ and denote by
\[
\bar{\Sigma}:=\overline{\{g_0(\cdot+h)|h\in \mathbb R\}}^{L^2_\mathrm{loc}(\mathbb{R};V')}.
\]
 Note that the class of translation compact functions is also a  closed subspace of
the class of translation bounded functions, but it is a
proper subset of the class of normal functions (for more details, see \cite{LWZ05}). Note that the argument in Section \ref{3DNSE} before Lemma \ref{l:precompactofLH} is valid for $\Sigma$ replaced by $\bar{\Sigma}$ and Lemma \ref{l:precompactofLH} can be improved as follows.

\begin{lemma} \label{l:compactofLH}
Let $u_n(t)$ be a sequence of Leray-Hopf solutions of  \eqref{NSE1} with forces
$g_n \in \bar{\Sigma}$, such that $u_n(t) \in X$ for all $t\geq t_1$. Then
\begin{equation*}
\begin{split}
u_n \ \ &\mbox{is bounded in} \ \ L^2(t_1,t_2;V)\ \ \mbox{and} \ \ L^\infty(t_1,t_2;H),\\
\ddt u_n \ \  &\mbox{is bounded in} \ \ L^{4/3}(t_1,t_2;V'),
\end{split}
\end{equation*}
for all $t_2>t_1$. Moreover, there exists a subsequence $n_j$, such
that $g_{n_j}$ converges in
$L^{2}_{\mathrm{loc}}(\mathbb{R};V')$  to some  $g \in
\bar{\Sigma}$ and $u_{n_j}$ converges in $C([t_1, t_2]; \Hw)$ to some
Leray-Hopf solution $u(t)$ of  \eqref{NSE1} with the force $g$,
i.e.,
\[
(u_{n_j},v) \to (u,v) \quad
\mbox{uniformly on} \quad  [t_1,t_2],
\]
as $n_j\to \infty$, for all $v \in H$.
\end{lemma}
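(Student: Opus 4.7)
The plan is to follow the proof of Lemma~\ref{l:precompactofLH} for the boundedness statements and the extraction of a limit, and then to \emph{upgrade} the conclusion from weak solution to Leray--Hopf solution using the translation compactness of $g_0$.

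First, I would note that the bounds on $u_n$ in $L^2(t_1,t_2;V) \cap L^\infty(t_1,t_2;H)$ and on $\ddt u_n$ in $L^{4/3}(t_1,t_2;V')$ follow exactly as in Lemma~\ref{l:precompactofLH} from the energy inequality \eqref{EI} and standard estimates on \eqref{NSEn}, since \eqref{i:g} remains valid with $\Sigma$ replaced by $\bar{\Sigma}$. The new ingredient is that, since $g_0$ is translation compact in $L^2_{\mathrm{loc}}(\mathbb R;V')$, its closure $\bar{\Sigma}$ is compact in this space. Hence one subsequence of $g_n$ converges \emph{strongly} (not merely weakly) in $L^2_{\mathrm{loc}}(\mathbb R;V')$ to some $g\in\bar{\Sigma}$. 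Combined with the weak and weak-$*$ extractions already employed in Lemma~\ref{l:precompactofLH}, together with Aubin--Lions compactness applied to the bounds above, a standard diagonalization produces a subsequence $u_{n_j}$ converging to some $u$ in $C([t_1,t_2];\Hw)$ for every $t_2 > t_1$ and strongly in $L^2_{\mathrm{loc}}([t_1,\infty);H)$, and passing to the limit in \eqref{NSEn} shows that $u$ solves \eqref{NSE} with force $g$ in the sense of distributions.

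The main obstacle, and the real point of the lemma, is to verify that $u$ satisfies the energy inequality and is therefore a Leray--Hopf solution (cf.\ Remark~\ref{r:limitLH}). For this, after passing to a further subsequence via Aubin--Lions, I would select $t_0$ from the co-null set on which $u_{n_j}(t_0) \to u(t_0)$ strongly in $H$ and on which \eqref{EI} holds for every $u_{n_j}$. Taking the liminf as $n_j\to\infty$ in
\[
|u_{n_j}(t)|^2 + 2\nu \int_{t_0}^t \|u_{n_j}(s)\|^2 \, ds \leq |u_{n_j}(t_0)|^2 + 2\int_{t_0}^t \langle g_{n_j}(s), u_{n_j}(s)\rangle \, ds,
\]
the two terms on the left are handled by weak lower semi-continuity: $|u(t)|^2 \leq \liminf |u_{n_j}(t)|^2$ since $u_{n_j}\to u$ in $C([t_1,t_2];\Hw)$, and $\int_{t_0}^t \|u\|^2\,ds \leq \liminf \int_{t_0}^t \|u_{n_j}\|^2\,ds$ by weak lower semi-continuity in $L^2(t_0,t;V)$. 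On the right, $|u_{n_j}(t_0)|^2 \to |u(t_0)|^2$ by the choice of $t_0$.

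The delicate term is the forcing integral, and this is precisely where the upgraded translation compactness hypothesis pays off: because $g_{n_j}\to g$ \emph{strongly} in $L^2(t_1,t_2;V')$ while $u_{n_j}\to u$ weakly in $L^2(t_1,t_2;V)$, the strong/weak pairing gives
\[
\int_{t_0}^t \langle g_{n_j}(s), u_{n_j}(s)\rangle \, ds \;\longrightarrow\; \int_{t_0}^t \langle g(s), u(s)\rangle \, ds.
\]
Combining everything and taking the liminf yields \eqref{EI} for $u$ at the chosen $t_0$ and all $t\geq t_0$, and since the set of admissible $t_0$ is co-null, $u$ is a Leray--Hopf solution with force $g$. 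I expect this strong/weak pairing to be the hardest point to articulate cleanly; it is exactly the step that fails under mere translation boundedness of $g_0$, which is why Lemma~\ref{l:precompactofLH} could only produce a weak solution whose Leray--Hopf status is unknown.
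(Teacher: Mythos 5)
Your proposal is correct and follows essentially the same route as the paper: reuse Lemma~\ref{l:precompactofLH} with $\bar{\Sigma}$ in place of $\Sigma$ for the bounds and the weak limit, then pass to the limit in the energy inequality \eqref{EI} using weak lower semicontinuity on the left, strong convergence of $u_{n_j}(t_0)$ in $H$ for a.e.\ $t_0$, and the strong $L^2_{\mathrm{loc}}(\mathbb{R};V')$ convergence of $g_{n_j}$ (from translation compactness) paired with weak $L^2(t_1,t_2;V)$ convergence of $u_{n_j}$ for the forcing term. Your write-up actually supplies more detail (choice of the co-null set of admissible $t_0$, the strong/weak pairing) than the paper's sketch, but the argument is the same.
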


\begin{proof}See \cite{CF89, CV02}. Here we give a brief sketch.

The proof of Lemma \ref{l:precompactofLH} is still valid if we substitute $\bar{\Sigma}$ for $\Sigma$. So, the remains is  to verify \eqref{EI} for the limit $u$. We have
\begin{equation} \label{EIofun}
|u_n(t)|^2 + 2\nu \int_{t_0}^t \|u_n(s)\|^2 \, ds \leq
|u_n(t_0)|^2 + 2\int_{t_0}^t \langle g_n(s), u_n(s)\rangle \, ds
\end{equation}
for all $t \geq t_0$, $t_0$ a.e. in $[t_1,\infty)$. Note that
\[
\begin{aligned}
u_{n}(t)\rightarrow u(t) \quad &\text{ weakly in } H, \quad \forall\, t\geq t_1,\\
&\text{ strongly in } H, \quad t \text{ a.e. in } [t_1,\infty),\\
&\text{ weakly in }L^2_{\mathrm{loc}}(t_1,\infty;V),
\end{aligned}
\] and the convergence in \eqref{convofgn} is strong for $g_0$ is translation compact in
$L^2_\mathrm{loc}(\mathbb{R};V')$. Therefore, taking the limit of \eqref{EIofun} we obtain the energy inequality
\begin{equation*}
|u(t)|^2 + 2\nu \int_{t_0}^t \|u(s)\|^2 \, ds \leq
|u(t_0)|^2 + 2\int_{t_0}^t \langle g(s), u(s)\rangle \, ds
\end{equation*}
for all $t \geq t_0$, $t_0$ a.e. in $[t_1,\infty)$.
\end{proof}
Due to this lemma, now we can consider another evolutionary system with $\bar{\Sigma}$ as a symbol space.  The family
of trajectories of the evolutionary system  consists of all Leray-Hopf solutions of the family
of 3D Navier-Stokes equations with  forces $g\in \bar{\Sigma}$ in $X$:
\[
\begin{split}
\Dc_{\bar{\Sigma}}([T,\infty)) := \{&u(\cdot): u(\cdot) \mbox{ is a
Leray-Hopf}
\mbox{ solution on } [T,\infty)\\
&\mbox{with the force } g\in \bar{\Sigma} \mbox{ and } u(t) \in X, \
\forall t \in [T,\infty)\}, \quad T \in \mathbb{R},
\end{split}
\]
\[
\begin{split}
\Dc_{\bar{\Sigma}}((-\infty,\infty)) := \{&u(\cdot): u(\cdot)
\mbox{ is a Leray-Hopf} \ \mbox{ solution on } (-\infty,\infty)\\
&\mbox{with the force } g\in \bar{\Sigma} \mbox{ and } u(t) \in X, \
\forall t \in (-\infty,\infty)\}.
\end{split}
\]
Obviously, $\Dc\subset \Dc_{\bar{\Sigma}}$.

We have the following lemma.
\begin{lemma} \label{l:compact}
The evolutionary system $\Dc_{\bar{\Sigma}}$ of the family of  3D NSE with
forces in $\bar{\Sigma}$ satisfies \={A1}, \={A2} and \={A3}.
\end{lemma}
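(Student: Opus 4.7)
The plan is to adapt the proof of Lemma~\ref{l:compact0} to the enlarged symbol space $\bar{\Sigma}$, with Lemma~\ref{l:compactofLH} supplying the crucial new ingredient: a $C$-in-$\Xw$ limit of Leray--Hopf solutions with forces in $\bar{\Sigma}$ is again a Leray--Hopf solution with some force in $\bar{\Sigma}$. This is precisely what was missing in Lemma~\ref{l:precompactofLH} and is exactly what is needed to upgrade the $C([0,\infty);\Xw)$-precompactness obtained in Lemma~\ref{l:compact0} to the full compactness required by \={A1}.

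For \={A1}, I would take any sequence $u_n \in \Dc_{\bar{\Sigma}}([0,\infty))$ with associated forces $g_n \in \bar{\Sigma}$, apply Lemma~\ref{l:compactofLH} successively on the intervals $[0,k]$ for $k=1,2,\dots$, and diagonalize, producing a subsequence that converges in $C([0,\infty);\Xw)$ to a Leray--Hopf solution $u$ with some force $g \in \bar{\Sigma}$; since $X$ is weakly closed, $u(t)\in X$ for every $t\ge 0$, so $u\in\Dc_{\bar{\Sigma}}([0,\infty))$, yielding compactness. For \={A3}, the argument is the same as in Lemma~\ref{l:compact0}: if $u_n\to u$ in $C([0,T];\Xw)$ then Lemma~\ref{l:compactofLH} bounds $\{u_n\}$ in $L^2(0,T;V)$, an Aubin--Lions strong compactness in $L^2(0,T;H)$ follows, and a further subsequence yields $|u_n(t)|\to|u(t)|$ for almost every $t\in[0,T]$.

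The delicate step is \={A2}, which is where the translation compactness of $g_0$ (strictly stronger than the normality assumption used in Lemma~\ref{l:compact0}) becomes essential. Translation compactness of $g_0$ in $L^2_{\mathrm{loc}}(\mathbb{R};V')$ implies normality of $g_0$: for every $\epsilon>0$ there exists $\delta>0$ with $\sup_{t\in\mathbb{R}}\int_{t-\delta}^t \|g_0(s)\|_{V'}^2\,ds \leq \nu\epsilon$, a bound that is translation invariant and hence holds uniformly for every $g\in\Sigma$. I would then observe that for each fixed $t\in\mathbb{R}$ the map $g\mapsto\int_{t-\delta}^t \|g(s)\|_{V'}^2\,ds$ is continuous under strong $L^2([t-\delta,t];V')$-convergence, so passing to the $L^2_{\mathrm{loc}}$-closure the same normality estimate holds for every $g\in\bar{\Sigma}$, uniformly in $t$. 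Plugging this bound into the energy inequality, as in the proof of Lemma~\ref{l:compact0}, produces $|u(t)|^2 \leq |u(t_0)|^2 + \epsilon$ for every $u\in\Dc_{\bar{\Sigma}}([0,\infty))$, every $t>0$, and almost every $t_0\in(t-\delta,t)$, which is \={A2}. The main potential pitfall is to confirm that normality really does transfer to every limit in the $L^2_{\mathrm{loc}}$-closure \emph{uniformly} in the time variable; once that point is in hand, the remaining energy-inequality computation is routine.
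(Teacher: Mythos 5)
Your proposal is correct and takes essentially the same route as the paper, whose proof simply states that the argument of Lemma~\ref{l:compact0} carries over with Lemma~\ref{l:compactofLH} replacing Lemma~\ref{l:precompactofLH}, so that the diagonal limits now lie in $\Dc_{\bar{\Sigma}}([0,\infty))$ and \={A1}--\={A3} follow. Your additional check for \={A2} --- that the uniform normality estimate $\sup_{t}\int_{t-\delta}^{t}\|g\|_{V'}^2\,ds\le\nu\epsilon$ passes from $\Sigma$ to its $L^2_{\mathrm{loc}}(\mathbb{R};V')$-closure $\bar{\Sigma}$ by strong convergence on each window $[t-\delta,t]$ --- is correct and merely makes explicit a detail the paper leaves implicit.
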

\begin{proof}
The proof is similar to Lemma \ref{l:compact0}. The difference is that we have
to use Lemma \ref{l:compactofLH} instead of Lemma \ref{l:precompactofLH}, and that $\{u^i\}$ and $u$ would now be contained in $\Dc_{\bar{\Sigma}}([0,\infty))$.
\end{proof}
Similarly,  Lemma \ref{l:compact} and Theorem \ref{t:weakA}
 yield the following (cf. \cite{CV02}).
\begin{theorem} \label{t:AwNSE}The uniform weak global attractor $\Aw^{\bar{\Sigma}}$ for the family of 3D
NSE with forces $g\in \bar{\Sigma}$ exists, $\Aw^{\bar{\Sigma}}$ is the maximal invariant
and maximal quasi-invariant set w.r.t.  the corresponding
evolutionary system $\Dc_{\bar{\Sigma}}$, and
\[
\Aw^{\bar{\Sigma}} =\{u(0): u \in \Dc_{\bar{\Sigma}}((-\infty, \infty))\}.
\]
Moreover, the weak uniform tracking property holds.
\end{theorem}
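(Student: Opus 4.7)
The plan is to deduce this theorem as a direct application of the general machinery in Theorem \ref{t:weakA} to the evolutionary system $\Dc_{\bar{\Sigma}}$, since all four conclusions (existence of $\Aw^{\bar{\Sigma}}$, the characterization as the set of initial values of complete trajectories, maximal invariance/quasi-invariance, and the weak uniform tracking property) are statements made in Theorem \ref{t:weakA} for any evolutionary system satisfying \={A1}.

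First I would check that $\Dc_{\bar{\Sigma}}$ is a bona fide (autonomous) evolutionary system in the sense of Definition~\ref{Dc}, after performing the reduction of Section~\ref{s:ES}. The four axioms for the nonautonomous family $\{\Dc_\sigma\}_{\sigma\in\bar{\Sigma}}$ of Definition~\ref{d:Dc0} are immediate from the definition of Leray--Hopf solutions: nonemptiness follows from Theorem~\ref{thm:Leray}, the translation identity uses the translation semigroup on $\bar{\Sigma}$ (which leaves $\bar{\Sigma}$ invariant by definition of the closure), and the restriction and concatenation conditions are part of the standard definition of a Leray--Hopf solution on a subinterval. Passing via the construction in Section~\ref{S:nonauto} then packages $\Dc_{\bar{\Sigma}}$ as a (autonomous) evolutionary system on $X$, with the uniform absorbing ball $X$ playing the role of phase space.

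Second, I would invoke Lemma~\ref{l:compact} to conclude that $\Dc_{\bar{\Sigma}}$ satisfies \={A1}: its trajectories on $[0,\infty)$ form a compact (not just precompact) subset of $C([0,\infty);\Hw)$, the crucial point being that any $C([0,\infty);\Hw)$-limit of Leray--Hopf solutions with forces in $\bar{\Sigma}$ is itself a Leray--Hopf solution with a force in $\bar{\Sigma}$; this closure property is exactly the improvement of Lemma~\ref{l:compactofLH} over Lemma~\ref{l:precompactofLH} and relies on the translation compactness of $g_0$ in $L^2_{\mathrm{loc}}(\mathbb{R};V')$ to pass to the limit in the energy inequality strongly in the forcing term.

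With \={A1} in hand, Theorem~\ref{t:weakA} applied to $\Dc_{\bar{\Sigma}}$ delivers all four conclusions at once: the existence of $\Aw^{\bar{\Sigma}}$, the identity $\Aw^{\bar{\Sigma}}=\ww(X)=\ws(X)=\{u(0):u\in\Dc_{\bar{\Sigma}}((-\infty,\infty))\}$, the maximal invariance and maximal quasi-invariance of $\Aw^{\bar{\Sigma}}$ with respect to $\Dc_{\bar{\Sigma}}$, and the weak uniform tracking property. Note that no passage to a closure $\bar{\Dc}_{\bar{\Sigma}}$ is needed here, in contrast with Theorem~\ref{t:Aw0NSE}, precisely because $\Dc_{\bar{\Sigma}}([0,\infty))$ is already $C([0,\infty);\Hw)$-closed. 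The only non-mechanical step is the verification that \={A1} holds, i.e.\ the closure property for the enlarged family; the compactness part comes from the standard a priori bounds and an Aubin--Lions/diagonalization argument, while the identification of the weak limit as a Leray--Hopf solution with a limit force requires the strong convergence $g_{n_j}\to g$ in $L^2_{\mathrm{loc}}(\mathbb{R};V')$ furnished by translation compactness, which is precisely where the stronger hypothesis on $g_0$ (compared to the mere translation boundedness of Section~\ref{3DNSE}) is used.
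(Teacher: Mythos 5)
Your proposal is correct and follows the paper's own route exactly: the paper proves Theorem \ref{t:AwNSE} by combining Lemma \ref{l:compact} (which gives \={A1}, via the closure property of Lemma \ref{l:compactofLH} under the translation compactness of $g_0$) with Theorem \ref{t:weakA} applied directly to $\Dc_{\bar{\Sigma}}$, with no passage to a closure needed. Your additional verification of the evolutionary-system axioms and your identification of where the strong convergence of the forces is used match the paper's (terser) argument.
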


Theorem \ref{t:strongA} and \ref{t:AComp} give a criterion for strong compactness of the attractor.
\begin{theorem}\label{t:AsNSE}
If every complete trajectory of the family of 3D NSE with forces $g\in
\bar{\Sigma}$ is strongly continuous, then the weak global attractor $\Aw^{\bar{\Sigma}}$
is a strongly compact strong global attractor $\As^{\bar{\Sigma}}$. Moreover, the
strong uniform tracking property holds.
\end{theorem}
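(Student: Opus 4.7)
The plan is to reduce the statement to the two abstract theorems already proved for autonomous evolutionary systems, namely Theorem \ref{t:AComp} and Theorem \ref{t:strongA}, applied to the (autonomous) evolutionary system $\Dc_{\bar{\Sigma}}$ obtained by collecting all Leray--Hopf solutions corresponding to every $g\in\bar{\Sigma}$. The key observation is that, unlike in Section \ref{3DNSE} where we had only the weaker hypothesis A1, here Lemma \ref{l:compact} has already furnished the \emph{stronger} properties \={A1}, \={A2}, and \={A3} for $\Dc_{\bar{\Sigma}}$; so the abstract machinery from the autonomous theory applies directly.

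The first step is to invoke Theorem \ref{t:AComp}. Its hypotheses are exactly \={A1}, \={A2}, \={A3}, and the strong continuity of every complete trajectory. The first three are supplied by Lemma \ref{l:compact}, and the last is precisely the assumption of the theorem being proved. Thus $\Dc_{\bar{\Sigma}}$ is asymptotically compact.

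The second step is to feed this into Theorem \ref{t:strongA}. Part 1 of that theorem yields at once that the strong global attractor $\As^{\bar{\Sigma}}$ exists, is strongly compact, and coincides with $\Aw^{\bar{\Sigma}}$, which we already know exists by Theorem \ref{t:AwNSE}. Since $\Dc_{\bar{\Sigma}}$ satisfies \={A1}, part 2 of Theorem \ref{t:strongA} then delivers the strong uniform tracking property: for any $\epsilon>0$ and $T>0$, there is $t_0$ such that for any $t^{*}>t_0$, every $u\in\Dc_{\bar{\Sigma}}([0,\infty))$ is within $\epsilon$ in the $\ds$-metric of some complete trajectory $v\in\Dc_{\bar{\Sigma}}((-\infty,\infty))$ throughout $[t^{*},t^{*}+T]$.

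There is no real obstacle: the argument is essentially a bookkeeping exercise of checking that the three abstract hypotheses are in place and then quoting the black-box theorems. The only subtlety worth flagging is that we use Theorem \ref{t:AComp} rather than Theorem \ref{t:AComp0}, since for $\Dc_{\bar{\Sigma}}$ we already have the bar-versions \={A1}--\={A3}; equivalently, one could apply Theorem \ref{t:AComp0} directly to $\Dc_{\bar{\Sigma}}$ since its closure equals itself. Either way the conclusion is immediate, and no further compactness or energy estimate specific to the Navier--Stokes structure is required beyond what was already established in Lemma \ref{l:compact}.
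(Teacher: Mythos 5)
Your proposal is correct and follows exactly the paper's route: the paper obtains Theorem \ref{t:AsNSE} by combining Lemma \ref{l:compact} (which gives \={A1}--\={A3} for $\Dc_{\bar{\Sigma}}$) with Theorem \ref{t:AComp} to get asymptotic compactness, and then Theorem \ref{t:strongA} for the strongly compact strong attractor and the strong uniform tracking property. Your remark that the closure of $\Dc_{\bar{\Sigma}}$ is itself, so the bar-versions apply directly, is the same observation implicit in the paper.
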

Let $\bar{\Dc}$ be the closure of the  evolutionary  system $\Dc$.
Obviously, $\Dc\subset\bar{\Dc}\subset\Dc_{\bar{\Sigma}}$. Then, an
interesting problem arises:
\begin{open}\label{op:attractor}
Are the uniform global attractors $\Ab$ and
$\Ab^{\bar{\Sigma}}$ in Theorems \ref{t:Aw0NSE}
and \ref{t:AsNSE} identical?
\end{open}
If the solutions of 3D NSE are unique, then the answer is positive
due to Theorem \ref{t:A0AwA-weak} and \ref{t:A0AwA-strong1}.
However, the negative answer, i.e., $\Ab\subsetneq\Ab^{\bar{\Sigma}}$, would
imply that the Leray-Hopf weak solutions are not unique and the uniform (w.r.t. symbol space) attractor doesn't satisfy the minimality property with respect to uniformly (w.r.t. initial time) attracting for  the original 3D NSE with fixed external force $g_0$.

We can also obtain a trajectory attractor for $\Dc_{\bar{\Sigma}}$ as in Section 5:

\begin{theorem}\label{t:traattractorsigma}
The trajectory attractor for  the family of 3D
NSE with forces $g\in \bar{\Sigma}$  exists and
\[\mathfrak A^{\bar{\Sigma}}=\Pi_+\Dc_{\bar{\Sigma}}((-\infty, \infty))=\{u(\cdot)|_{[0,\infty)}:u\in \Dc_{\bar{\Sigma}}((-\infty, \infty))\}, \]
satisfying
\[\mathcal A^{\bar{\Sigma}}_{\mathrm{w}}=\mathfrak A^{\bar{\Sigma}}(t)=\{u(t): u\in \mathfrak A^{\bar{\Sigma}}\},\quad \forall\, t\geq 0.\]
Furthermore, if  every complete trajectory of $\Dc_{\bar{\Sigma}}$ is strongly continuous then $\mathfrak A^{\bar{\Sigma}}$ uniformly attracts $\Dc_{\bar{\Sigma}}([0,\infty))$
in $L^\infty_{\mathrm{loc}}((0,\infty);H)$.
\end{theorem}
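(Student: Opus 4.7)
The plan is to derive this theorem directly from the abstract trajectory-attractor machinery of Section \ref{s:traattra}, specialized to the evolutionary system $\Dc_{\bar{\Sigma}}$, using Lemma \ref{l:compact} as the only input particular to the 3D NSE. First I will invoke Lemma \ref{l:compact} to see that $\Dc_{\bar{\Sigma}}$ satisfies \={A1}, \={A2}, and \={A3}, which in particular imply the weaker assumptions A1, A2, and A3 used in Section \ref{s:traattra}.

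Next I will exploit the fact that \={A1} makes $\Dc_{\bar{\Sigma}}([0,\infty))$ \emph{compact} (hence closed) in $C([0,\infty);\Hw)$. Therefore the closure $\bar{\Dc}_{\bar{\Sigma}}([0,\infty))$ coincides with $\Dc_{\bar{\Sigma}}([0,\infty))$ itself, and condition 4 of Definition \ref{d:Dc0} then forces $\bar{\Dc}_{\bar{\Sigma}}((-\infty,\infty)) = \Dc_{\bar{\Sigma}}((-\infty,\infty))$. In other words, $\Dc_{\bar{\Sigma}}$ is already its own closure, so the kernel of the closure collapses to $\Dc_{\bar{\Sigma}}((-\infty,\infty))$.

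With this observation in hand, Theorem \ref{t:weaktrattracor} applied to $\Dc_{\bar{\Sigma}}$ delivers simultaneously the existence of $\mathfrak A^{\bar{\Sigma}}$, the identity $\mathfrak A^{\bar{\Sigma}} = \Pi_+\Dc_{\bar{\Sigma}}((-\infty,\infty))$, and the slice relation $\mathcal A^{\bar{\Sigma}}_{\mathrm{w}} = \mathfrak A^{\bar{\Sigma}}(t)$ for every $t\geq 0$; the latter is also consistent with the description of $\mathcal A^{\bar{\Sigma}}_{\mathrm{w}}$ in Theorem \ref{t:AwNSE}. For the strong-attraction statement, the hypothesis that every complete trajectory of $\Dc_{\bar{\Sigma}}$ is strongly continuous immediately yields $\mathfrak A^{\bar{\Sigma}} \subset C([0,\infty);\Xs)$ by construction, so Theorem \ref{t:strongtrattracor} applied to $\Dc_{\bar{\Sigma}}$ (whose hypotheses A1, A2, A3 have been verified above) produces the desired uniform attraction of $\Dc_{\bar{\Sigma}}([0,\infty))$ by $\mathfrak A^{\bar{\Sigma}}$ in $L^\infty_{\mathrm{loc}}((0,\infty);H)$.

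There is no substantive obstacle here, since the analytic core — the abstract existence of the trajectory attractor, its kernel representation, and the upgrade from weak to strong $L^\infty_{\mathrm{loc}}$ attraction — has already been carried out in Sections \ref{s:closing} and \ref{s:traattra}. The only point that demands a moment's care is the observation that the closure of $\Dc_{\bar{\Sigma}}$ coincides with $\Dc_{\bar{\Sigma}}$ itself, so that the kernel $\bar{\mathcal K}$ appearing in Theorem \ref{t:weaktrattracor} may be replaced throughout by $\Dc_{\bar{\Sigma}}((-\infty,\infty))$, yielding the statement as written.
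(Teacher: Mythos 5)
Your proposal is correct and follows essentially the paper's intended route: the theorem is obtained ``as in Section 5'' by feeding Lemma \ref{l:compact} into the abstract results of Section \ref{s:traattra} (equivalently, into the \={A1}-version of Theorem \ref{t:weaktrattracor} noted in its proof), and your observation that \={A1} makes $\Dc_{\bar{\Sigma}}$ coincide with its own closure is exactly why the kernel appearing in the statement is that of $\Dc_{\bar{\Sigma}}$ itself. The strong-attraction step via $\mathfrak A^{\bar{\Sigma}}\subset C([0,\infty);\Xs)$ and Theorem \ref{t:strongtrattracor} (or, equivalently, via Theorem \ref{t:AComp} and Theorem \ref{t:strongtrattracorasy}) matches the paper's argument for Theorem \ref{t:traattractor}.
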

A similar problem on the relationship of this trajectory attractor and that for $\Dc$ also arises:
\begin{open}\label{op:traattractor}
Are the trajectory attractors $\mathfrak A$ and
$\mathfrak A^{\bar{\Sigma}}$ in Theorem \ref{t:traattractor} and \ref{t:traattractorsigma} identical?
\end{open}

 This open problem hints that, in general, the trajectory attractors constructed in  \cite{CV02} for the systems without uniqueness might not satisfy the minimality
property.

\section*{Acknowledgements}
The authors are very grateful to referees for several valuable comments.

\end{document}